\newcommand{\SL}{\mathrm{SL}}
\newcommand{\N}{\mathbb{N}}
\newcommand{\Z}{\mathbb{Z}}
\newcommand{\R}{\mathbb{R}}
\newcommand{\C}{\mathbb{C}}
\newcommand{\Q}{\mathbb{Q}}
\renewcommand{\H}{\mathbb{H}}
\DeclareMathOperator{\sgn}{sgn}
\DeclareMathOperator{\Res}{Res}
\DeclareMathOperator{\im}{Im}
\DeclareMathOperator{\Int}{Int}
\renewcommand{\Im}{\mathrm{Im}}
\renewcommand{\Re}{\mathrm{Re}}
\numberwithin{equation}{section}
	\newtheorem{theorem}{Theorem}[section]
	\newtheorem{lemma}[theorem]{Lemma}
	\newtheorem{proposition}[theorem]{Proposition} 
	\newtheorem{corollary}[theorem]{Corollary}
	\newtheorem{conjecture}[theorem]{Conjecture}
	\theoremstyle{definition} 
	\newtheorem{example}[theorem]{Example}
	\newtheorem{remark}[theorem]{Remark}
\date{\today}
\author{Steffen L\"obrich and Markus Schwagenscheidt}
\address{Korteweg-de Vries Institute for Mathematics, University of Amsterdam, Science Park 105-107, 1098 XG Amsterdam, The Netherlands}
\email{s.loebrich@uva.nl}
\address{Mathematical Institute, University of Cologne, Weyertal 86-90, D--50931 Cologne, Germany}
\email{mschwage@math.uni-koeln.de}
\title{Meromorphic modular forms with rational cycle integrals}
\thanks{The work of the first author is supported by ERC starting grant H2020 ERC StG \#640159. The second author is supported by the SFB-TRR 191 \lq Symplectic Structures in Geometry, Algebra and Dynamics\rq, funded by the DFG}
\begin{document}

\begin{abstract}
	We study rationality properties of geodesic cycle integrals of meromorphic modular forms associated to positive definite binary quadratic forms. In particular, we obtain finite rational formulas for the cycle integrals of suitable linear combinations of these meromorphic modular forms. 
\end{abstract}

\maketitle

\section{Introduction}
\label{sec:introduction}

One of the fundamental results in the classical theory of modular forms is the fact that the vector spaces of modular forms are spanned by forms with rational Fourier coefficients. Besides that, there are other natural rational structures on these spaces, for example coming from the rationality of periods or cycle integrals of modular forms. This was first shown by Kohnen and Zagier in \cite{kohnenzagierrationalperiods}, where they proved the rationality of the even periods of the cusp forms
\begin{align}\label{fkdef}
f_{k,D}(z) := \frac{|D|^{k-\frac{1}{2}}}{\pi}\sum_{Q \in \mathcal{Q}_{D}}Q(z,1)^{-k}
\end{align}
of weight $2k$ for $\Gamma(1) = \SL_{2}(\Z)$, for $k \geq 2$ and all discriminants $D > 0$. Here the sum runs over the set $\mathcal{Q}_{D}$ of all integral binary quadratic forms of discriminant $D$. These cusp forms were introduced by Zagier while investigating the Doi-Naganuma lift in \cite{zagierdoinaganuma}, and they played a prominent role in the explicit description of the Shimura-Shintani correspondence in \cite{kohnenzagiercriticalstrip}. The aforementioned rationality result of Kohnen and Zagier was generalized to Fuchsian groups of the first kind by Katok \cite{katok}. Periods and cycle integrals of other types of modular forms, such as weakly holomorphic modular forms, harmonic Maass forms, or meromorphic modular forms, have been the object of active research over the last years, see for example \cite{bringmannfrickekent, bringmannguerzhoykane, bif, bifl, dit}.

If we allow negative discriminants $D < 0$ in \eqref{fkdef} and restrict the summation to positive definite forms $Q \in \mathcal{Q}_{D}$, then we obtain meromorphic modular forms $f_{k,D}$ of weight $2k$ for $\Gamma(1)$ with poles of order $k$ at the CM points of discriminant $D$. These forms recently attracted some attention, starting with the work of Bengoechea \cite{bengoecheapaper} on the rationality properties of their Fourier coefficients. Their regularized inner products and connections to locally harmonic Maass forms were investigated by Bringmann, Kane, and von Pippich \cite{bringmannkanevonpippich} and the first author \cite{loebrich}. Furthermore, Zemel \cite{zemel} used them to prove a higher-dimensional analogue of the Gross-Kohnen-Zagier theorem \cite{grosskohnenzagier}, which hints at a deeper geometric meaning of the meromorphic $f_{k,D}$. Recently, Alfes-Neumann, Bringmann, and the second author in \cite{anbs} established modularity properties of the generating series of traces of cycle integrals of $f_{k,D}$ and used this to show the rationality of suitable linear combinations of these traces. 

It is natural to ask whether the individual cycle integrals of the meromorphic modular forms $f_{k,D}$ for $D < 0$ have nice rationality properties too. For an indefinite integral binary quadratic form $A = [a,b,c]$ of non-square discriminant the cycle integral of $f_{k,D}$ along the closed geodesic corresponding to $A$ is defined by
\begin{align*}
	\mathcal{C}(f_{k,D},A) := \int_{\Gamma(1)_{A}\backslash S_{A}}f_{k,D}(z)A(z,1)^{k-1}dz,
\end{align*}
where 
\[
S_{A} := \{z \in \H : a|z|^{2}+b\Re(z)+c = 0\}
\]
is a semi-circle centered at the real line and $\Gamma(1)_{A}$ denotes the stabilizer of $A$ in $\Gamma(1)$. Note that, due to the modularity of $f_{k,D}$, the cycle integral depends only on the $\Gamma(1)$-equivalence class of $A$. If $f_{k,D}$ has a pole on $S_A$, the cycle integral can be defined as a Cauchy principal value, see Section~\ref{section cycle integrals}. Numerical integration yields the following approximations for $k \in \{2,4,6\}$ and $D = -3$.
\begin{align*}
	 \renewcommand{\arraystretch}{1.2}
	\begin{array}{|c||c|c|c|c|c|c|}
	\hline 
	A & [1,1,-1] & [1,0,-2] & [1,1,-3] & [1,1,-4] & [1,1,-5] & [1,0,-6] \\ 
	 \hline \hline
	 \mathcal{C}(f_{2,-3},A) & 4 & 8 & 12 & 28 & 10 & 16\\
	 \hline
	\mathcal{C}(f_{4,-3},A) & 20 & 48 & 92 & 452 & 170 & 288 \\
	\hline 
	\mathcal{C}(f_{6,-3},A) & 142.36448 & 411.27103 & 1049.99067 & 12351.27103 & 5635.65417 & 8944.31786\\
	\hline
	\end{array}
\end{align*}
It seems that the cycle integrals of $f_{2,-3}$ and $f_{4,-3}$ are integers, but there is little reason to believe that the cycle integrals of $f_{6,-3}$ are rational numbers. 

The main aim of the present work is to investigate the rationality of the cycle integrals of $f_{k,D}$ for $D < 0$. As we will see, the failure of rationality of these cycle integrals is due to the existence of cusp forms of weight $2k$. In particular, we have to take certain linear combinations of cycle integrals of a fixed $f_{k,D}$ or a fixed cycle integral of linear combinations of forms $f_{k,D}$ to obtain convenient rationality results. We also treat forms of higher level $\Gamma_{0}(N)$, as well as the case $k = 1$. We remark that our results generalize the rationality results of \cite{anbs} in several aspects, using a very different proof.

\section{Statement of results}\label{section results}

Let $N$ and $k$ be positive integers and let $\Gamma = \Gamma_{0}(N)$. For any $D \in \Z$ the group $\Gamma$ acts on the set $\mathcal{Q}_{D}$ of (positive definite if $D < 0$) integral binary quadratic forms $Q = [a,b,c]$ of discriminant $D = b^{2}-4ac$ with $N \mid a$, with finitely many orbits if $D \neq 0$. We write $[Q_{0}]$ for the $\Gamma$-class of $Q_{0} \in \mathcal{Q}_{D}$. For $D \neq 0$ and $k \geq 2$ we define the associated function
\[
f_{k,Q_{0}}(z) := \frac{|D|^{k-\frac{1}{2}}}{\pi}\sum_{Q \in [Q_{0}]}Q(z,1)^{-k}
\]
on $\H$. For $k = 1$ the function $f_{1,Q_{0}}(z)$ is defined using Hecke's trick, see Section~\ref{section modular forms quadratic forms}. Throughout, we let $A \in \mathcal{Q}_{D}$ denote an indefinite quadratic form of non-square discriminant $D > 0$, and $P \in \mathcal{Q}_{d}$ a positive definite quadratic form of discriminant $d < 0$. Then $f_{k,A}$ is a cusp form of weight $2k$ for $\Gamma$, and $f_{k,P}$ is a meromorphic modular form of weight $2k$ for $\Gamma$ which has poles of order $k$ at the CM points $\tau_{Q} \in \H$ (defined by $Q(\tau_{Q},1) =0$) for $Q \in [P]$. 

Our explicit formulas for the cycle integrals of $f_{k,P}$ will be given in terms of the following function. For $k \geq 2$, an indefinite quadratic form $A \in \mathcal{Q}_{D}$ of non-square discriminant $D > 0$, and $\tau \in \H$ not lying on any of the semi-circles $S_{Q}$ for $Q \in [A]$, we define the function
\begin{align}\label{eq def local polynomial}
\begin{split}
\mathcal{P}_{k,A}(\tau) &:=D^{k-\frac12}\frac{(-1)^k\zeta_{\Gamma,A}(k)+ \zeta_{\Gamma,-A}(k)}{2^{k-2}(2k-1)\Im(\tau)^{k-1}}
 \\
 & \qquad +2 \left(-i\sqrt{D}\right)^{k-1}\sum_{\substack{Q = [a,b,c] \in [A] \\\tau \in \Int(S_{Q})}}\operatorname{sgn}(a) P_{k-1}\left(\frac{i(a|\tau|^{2}+b\Re(\tau)+c)}{\Im(\tau)\sqrt{D}}\right),
\end{split}
\end{align}
where the zeta function $\zeta_{\Gamma,A}(s)$ is defined in \eqref{zetadef}, $P_{k-1}$ denotes the usual Legendre polynomial, and $\Int(S_{Q})$ denotes the bounded component of $\H \setminus S_{Q}$. For $k = 1$ the function $\mathcal{P}_{1,A}(\tau)$ is defined analogously, but the first line has to be omitted. If $\tau \in \H$ does lie on one of the semi-circles $S_{Q}$ for $Q \in [A]$, we define the value of $\mathcal{P}_{k,A}$ at $\tau$ by the average value
\[
\mathcal{P}_{k,A}(\tau) := \lim_{\varepsilon \to 0}\frac{1}{2}\big(\mathcal{P}_{k,A}(\tau+i\varepsilon)+\mathcal{P}_{k,A}(\tau-i\varepsilon)\big).
\]
Note that the sum in the second line of \eqref{eq def local polynomial} is finite and $\mathcal{P}_{k,A}(\tau)$ has discontinuities along the semi-circles $S_{Q}$ for $Q \in [A]$. From the properties of $\zeta_{\Gamma,A}(s)$ given in Section~\ref{section zeta functions} it easily follows that the special values 
\[
|d|^{\frac{k-1}{2}}\mathcal{P}_{k,A}(\tau_{P})
\]
at CM points $\tau_{P} \in \H$ associated to positive definite forms $P \in \mathcal{Q}_{d}$ are rational numbers.

Our first rationality result concerns linear combinations of cycle integrals of a fixed $f_{k,P}$.

\begin{theorem}\label{theorem traces rationality}
	Let $\mathcal{Q}$ be a finite family of indefinite quadratic forms 
	%$A \in \mathcal{Q}_{D_{A}}$ 
	of non-square discriminants and $a_{A} \in \Z$ for $A \in \mathcal{Q}$ such that $\sum_{A \in \mathcal{Q}}a_{A}f_{k,A}= 0$ in $S_{2k}(\Gamma)$. Furthermore, let $P \in \mathcal{Q}_{d}$ be a positive definite quadratic form of discriminant $d$. Then we have the formula
\[
	\sum_{A \in \mathcal{Q}}a_{A}\mathcal{C}(f_{k,P},A)	= \frac{|d|^{\frac{k-1}{2}} }{|\overline{\Gamma}_{P}|}\sum_{A \in \mathcal{Q}} a_{A} \mathcal{P}_{k,A}(\tau_P),
\]
where $\overline{\Gamma}_{P}$ is the stabilizer of $P$ in $\Gamma/\{\pm 1\}$.
	  In particular, this linear combination of cycle integrals is a rational number whose denominator is bounded only in $k$ and $N$.
\end{theorem}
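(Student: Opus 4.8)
The plan is to interchange the cycle integral with the finite sum defining $f_{k,P}$ and to compute each resulting integral of a single term $Q(z,1)^{-k}$ against $A(z,1)^{k-1}$ along the geodesic $S_A$. Concretely, I would write
\[
\mathcal{C}(f_{k,P},A) = \frac{|d|^{k-\frac12}}{\pi}\sum_{Q \in [P]} \int_{\Gamma_A \backslash S_A} Q(z,1)^{-k} A(z,1)^{k-1}\, dz,
\]
and then unfold: since $[P]$ is a single $\Gamma$-class, the sum over $Q \in [P]$ together with the quotient by $\Gamma_A$ (or rather $\overline{\Gamma}_P$ and $\overline{\Gamma}_A$) can be rearranged so that one integrates a fixed term over a full geodesic $S_A$, or one sums over $\Gamma_A$-translates of a term over the fundamental segment. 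The key analytic input is the evaluation of the elementary integral $\int_{S_A} Q(z,1)^{-k} A(z,1)^{k-1}\, dz$ in closed form; parametrising $S_A$ and using a partial-fraction / contour argument, this integral should evaluate to a rational multiple of $(-i\sqrt D)^{k-1}|d|^{-(k-1)/2}$ times a Legendre polynomial $P_{k-1}$ evaluated at the argument appearing in \eqref{eq def local polynomial}, plus — when $Q$ and $A$ are ``linked'' in the sense that $\tau_Q$ lies inside $S_A$ — a residue contribution from the pole of $Q(z,1)^{-k}$ at $\tau_Q$. Summing these contributions over $Q \in [P]$ reproduces exactly $\mathcal{P}_{k,A}(\tau_P)$, the zeta-function term in the first line of \eqref{eq def local polynomial} accounting for the ``boundary at infinity'' / tail of the geodesic, as packaged in the definition of $\zeta_{\Gamma,A}(s)$ from Section~\ref{section zeta functions}.

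The role of the hypothesis $\sum_{A} a_A f_{k,A} = 0$ in $S_{2k}(\Gamma)$ is to kill the genuinely transcendental (or at least non-obviously-rational) part of the answer. Each individual $\mathcal{C}(f_{k,P},A)$ will in general be the sum of the rational ``local'' term $\frac{|d|^{(k-1)/2}}{|\overline\Gamma_P|}\mathcal{P}_{k,A}(\tau_P)$ and a term built from the cycle integrals of the cusp form $f_{k,A}$ — concretely an expression of the shape $\int_{\Gamma_A\backslash S_A} f_{k,P}^{\mathrm{hol}} A(z,1)^{k-1} dz$ or, via the pairing of $f_{k,P}$ with the Shintani-type cusp form $f_{k,A}$, something proportional to a period of $f_{k,A}$. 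When one forms the linear combination $\sum_A a_A \mathcal{C}(f_{k,P},A)$, the cusp-form contribution becomes a pairing of $f_{k,P}$ against $\sum_A a_A f_{k,A} = 0$, hence vanishes, leaving only $\frac{|d|^{(k-1)/2}}{|\overline\Gamma_P|}\sum_A a_A \mathcal{P}_{k,A}(\tau_P)$. The rationality of the latter is already asserted in the excerpt (the special values $|d|^{(k-1)/2}\mathcal{P}_{k,A}(\tau_P)$ are rational, with denominators controlled by the rationality properties of $\zeta_{\Gamma,A}(s)$ and of Legendre polynomials at rational arguments), and since $|\overline\Gamma_P|$ divides a bound depending only on $N$, the denominator bound depending only on $k$ and $N$ follows.

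The main obstacle I anticipate is the careful treatment of the poles of $f_{k,P}$ lying on the geodesic $S_A$ and the attendant principal-value regularisation: one must verify that the Cauchy principal value from Section~\ref{section cycle integrals} is exactly what makes the term-by-term contour manipulation legitimate, and that the ``average value'' convention for $\mathcal{P}_{k,A}$ on the semi-circles $S_Q$ matches the principal value on the geometry side. A second, more bookkeeping-heavy, obstacle is getting the unfolding constants right: tracking the index $|\overline\Gamma_P|$, the factor $2$ and the sign $\operatorname{sgn}(a)$ in \eqref{eq def local polynomial}, and the splitting of $S_A$ into the ``compact'' part (giving Legendre polynomials and residues) versus the two tails toward the endpoints of $S_A$ on $\mathbb{R}$ (giving the Epstein-type zeta values $\zeta_{\Gamma,\pm A}(k)$). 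Once the single-term integral is evaluated and the unfolding normalisation is pinned down, assembling the theorem is routine.
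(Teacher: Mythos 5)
Your outline has the right shape for the residue part, but it contains a genuine gap at exactly the step where the hypothesis $\sum_{A}a_{A}f_{k,A}=0$ has to be used. After unfolding over $[P]/\Gamma_{A}$ and closing each semicircle $S_{A}$ with the real segment joining its endpoints, the residues of the linked pairs do reproduce the finite Legendre-polynomial sum in \eqref{eq def local polynomial} (up to the factor $|\overline{\Gamma}_{P}|$). But what remains is an \emph{infinite} sum over $Q\in[P]/\Gamma_{A}$ of segment integrals (equivalently, of Legendre functions of the second kind in the linking invariant), and your proof simply asserts that this remainder equals the zeta-constant term of \eqref{eq def local polynomial} plus ``a pairing of $f_{k,P}$ against $f_{k,A}$'' which is linear in $f_{k,A}$ with an $A$-independent functional. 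That assertion is precisely the nontrivial content of the theorem: without showing that the non-local part of $\mathcal{C}(f_{k,P},A)-\frac{|d|^{(k-1)/2}}{|\overline{\Gamma}_{P}|}\mathcal{P}_{k,A}(\tau_{P})$ depends on $A$ only through the cusp form $f_{k,A}$, the hypothesis $\sum_{A}a_{A}f_{k,A}=0$ cannot be brought to bear. In the paper this is supplied by two specific inputs you do not replace: the kernel identity $\mathcal{C}(f_{k,P},A)=c\,R_{2-2k}^{k-1}(\mathcal{F}_{1-k,A})(\tau_{P})$ (Corollary~\ref{corollary main identity}, itself resting on Theorem~\ref{theorem locally harmonic maass form identity}) and the Bringmann--Kane--Kohnen splitting of $\mathcal{F}_{1-k,A}$ into the local polynomial plus the holomorphic and non-holomorphic Eichler integrals of $f_{k,A}$ (Theorem~\ref{local}, combined with Lemma~\ref{raisingP}). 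Those Eichler integrals are exactly the ``cusp form contribution'' that cancels; your sketch gives no argument producing them from the segment integrals.

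Two further points. First, your heuristic that the zeta values $\zeta_{\Gamma,\pm A}(k)$ arise from ``tails of the geodesic toward the endpoints'' is not right: the closed geodesic $c_{A}$ is compact and has no tails. The constant $c_{k}(A)$ enters as the constant term of the local polynomial $P_{1-k,A}$ (the value of the polynomial part in the unbounded component, pinned down via the behaviour at $i\infty$), and recovering it from your term-by-term computation would require summing the segment integrals explicitly, which you do not do. Second, the principal-value issue you flag is real but secondary; the substantive missing piece is the identification of the non-residue remainder with an $A$-linear functional of $f_{k,A}$, i.e.\ a replacement for Theorem~\ref{local} and Corollary~\ref{corollary main identity}.
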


%\begin{remark}
%In the proof of Theorem 1.6 of \cite{ANBS} it is shown that $\sum_{D>0}a_{-D}f_{k,D}= 0$ (for $N = 1$) whenever $\sum_{D\gg-\infty}a_{D}q^D$ is the Fourier expansion of a weight $\frac32-k$ weakly holomorphic modular form for $\Gamma_{0}(4)$ in the Kohnen plus space. Hence Theorem \ref{theorem traces rationality} generalizes Theorem 1.6 of \cite{ANBS}.
%\end{remark}

We would like to emphasize that the formula on the right-hand side can be evaluated exactly, giving the precise rational value of the linear combination of cycle integrals on the left-hand side.

The proof of Theorem~\ref{theorem traces rationality} uses the fact that the cycle integral $\mathcal{C}(f_{k,P},A)$ equals the special value at the CM point $\tau_{P}$ of (the iterated derivative of) a so-called locally harmonic Maass form $\mathcal{F}_{1-k,A}(\tau)$, see Corollary~\ref{corollary main identity}. This function was introduced by Bringmann, Kane, and Kohnen in \cite{bringmannkanekohnen}. They showed that $\mathcal{F}_{1-k,A}$ can be decomposed into a sum of a certain local polynomial (whose iterated derivative is $\mathcal{P}_{k,A}$), and holomorphic and non-holomorphic Eichler integrals of the cusp form $f_{k,A}$. Taking suitable linear combinations as in the theorem, one can achieve that the Eichler integrals cancel out, which yields the formula in Theorem~\ref{theorem traces rationality}. We refer to Section~\ref{section proof theorem traces rationality} for the details of the proof.

\begin{example}
Let $N = 1$. Since there are no non-trivial cusp forms of weight less than $12$ or weight $14$ for $\Gamma(1)$, the functions $f_{k,A}$ for $k \leq 5$ and $k=7$ vanish identically for every indefinite quadratic form $A$. Thus it follows from Theorem~\ref{theorem traces rationality} that the cycle integrals $\mathcal{C}(f_{k,P},A)$ are rational for $k \leq 5$ and $k=7$ for every choice of $P$ and $A$. This explains the rationality of the cycle integrals of $f_{2,-3}$ and $f_{4,-3}$ that we observed in the introduction.
In contrast, we have seen in the introduction that the cycle integrals $\mathcal{C}(f_{6,-3}, A)$ do not seem to be rational. This corresponds to the fact that $f_{6,A}$ is a cusp form of weight $12$ which does usually not vanish identically. However, using results of \cite{zagqf}, one can prove the relations
\[
2 f_{6,[1,1,-1]} + f_{6,[1,0,-2]}= 11 f_{6,[1,1,-1]} + f_{6,[1,1,-3]} = f_{6,[1,0,-1]} -f_{6,[1,1,-4]} =0.
\]
Now Theorem \ref{theorem traces rationality} asserts that for any positive definite quadratic form $P$, the corresponding linear combinations
\begin{align*}
2\mathcal{C}(f_{6,P}, [1,1,-1]) + \mathcal{C}(f_{6,P}, [1,0,-2]),& \\
11\mathcal{C}(f_{6,P}, [1,1,-1]) + \mathcal{C}(f_{6,P}, [1,1,-3]),& \\
 \mathcal{C}(f_{6,P}, [1,0,-2]) - \mathcal{C}(f_{6,P}, [1,1,-4]),&
\end{align*}
of cycle integrals of $f_{6,P}$ are rational numbers. For example, for $f_{6,[1,1,1]} = f_{6,-3}$ we have
\begin{align*}
2\mathcal{C}(f_{6,-3}, [1,1,-1]) + \mathcal{C}(f_{6,-3}, [1,0,-2]) &=696,\\
11\mathcal{C}(f_{6,-3}, [1,1,-1]) + \mathcal{C}(f_{6,-3}, [1,1,-3]) &=2616,\\
\mathcal{C}(f_{6,-3}, [1,0,-2]) - \mathcal{C}(f_{6,-3}, [1,1,-4]) &=-11940.
\end{align*} 

\end{example}

Next, we consider cycle integrals of certain linear combinations of forms $f_{k,P}$ over a single geodesic. Following \cite{grosszagier}, we call a sequence $\underline{\lambda} = (\lambda_{m})_{m=1}^{\infty} \subset \Z$ of integers a \emph{relation} for $S_{2k}(\Gamma)$ if 
	\begin{enumerate}
		\item $\lambda_{m} = 0$ for almost all $m$,
		\item $\sum_{m=1}^{\infty}\lambda_{m}c_{f}(m) = 0$ for every cusp form $f(z) = \sum_{m=1}^{\infty}c_{f}(m)q^{m} \in S_{2k}(\Gamma)$, and
		\item $\lambda_{m} = 0$ whenever $(m,N) > 0$.
	\end{enumerate}
%	Equivalently, this means that there exists a weakly holomorphic modular form of weight $2-2k$ for $\Gamma_{0}(N)$ whose Fourier expansion at $\infty$ has the form $\sum_{n=1}^{\infty}\lambda_{n}q^{-n} + O(1)$ and which is holomorphic at the other cusps. 
	For a meromorphic function $f$ on $\H$ which transforms like a modular form of weight $2k$ for $\Gamma$ we define its \emph{Hecke translate} corresponding to a relation $\underline{\lambda}$ by
\[
f|T_{\underline{\lambda}} := \sum_{m=1}^{\infty}\lambda_{m}f|T_{m},
\]
where $T_{m}$ denotes the usual $m$-th Hecke operator of level $N$, see \eqref{eq definition Tn}. Bengoechea \cite{bengoecheapaper} showed that the Fourier coefficients of $f_{k,P}|T_{\underline{\lambda}}$ are algebraic multiples of $\pi^{k-1}$ for every positive definite quadratic form $P$ and every relation $\underline{\lambda}$ for $S_{2k}(\Gamma)$. We obtain a rationality result for the cycle integrals of $f_{k,P}|T_{\underline{\lambda}}$.

\begin{theorem}\label{theorem traces rationality 2}
	Let $A \in \mathcal{Q}_{D}$ be an indefinite quadratic form of non-square discriminant $D$. Furthermore, let $P \in \mathcal{Q}_{d}$ be a positive definite quadratic form of discriminant $d$ and let $\underline{\lambda} = (\lambda_{m})$ be a relation for $S_{2k}(\Gamma)$. Then we have the formula
	\begin{align*}
	\mathcal{C}\left(f_{k,P}|T_{\underline{\lambda}},A\right) &= \frac{|d|^{\frac{k-1}{2}} }{|\overline{\Gamma}_{P}|}\sum_{m \geq 1} \lambda_{m}m^{k-1}\sum_{\substack{\alpha \delta =m \\ \delta>0}}\sum_{\beta\!\!\!\!\!\pmod{\delta}} \mathcal{P}_{k,A}\left(\frac{\alpha \tau_P+\beta}{\delta}\right).
	  \end{align*}
	  In particular, the cycle integrals of $f_{k,P}|T_{\underline{\lambda}}$ are rational numbers whose denominators are bounded only in $k$ and $N$.
\end{theorem}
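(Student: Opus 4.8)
The plan is to reduce the statement to Theorem~\ref{theorem traces rationality} by unwinding the definition of the Hecke translate $f_{k,P}|T_{\underline{\lambda}}$ as an explicit finite $\Z$-linear combination of forms of the type $f_{k,P'}$, and then to track what the right-hand side of Theorem~\ref{theorem traces rationality} becomes under this substitution. The key algebraic input is the standard fact that the Hecke operator $T_m$ of level $N$ acts on the Poincar\'e-type series $f_{k,P}$ by a formula of the shape $f_{k,P}|T_m = m^{k-1}\sum f_{k,P_j}$, where the forms $P_j$ run over (representatives of $\Gamma$-classes of) quadratic forms obtained from $P$ by the action of the integer matrices $\left(\begin{smallmatrix}\alpha & \beta\\ 0 & \delta\end{smallmatrix}\right)$ with $\alpha\delta = m$, $\delta > 0$, $\beta \bmod \delta$. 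Concretely, the CM point $\tau_{P}$ of $P$ gets sent to the points $\frac{\alpha\tau_P+\beta}{\delta}$; by the functorial behaviour of the association $Q\mapsto f_{k,Q}$ and of $Q\mapsto\tau_Q$ under $\mathrm{GL}_2^+(\Q)$, these are exactly the CM points of the $P_j$ appearing in $f_{k,P}|T_m$. This is where the factor $m^{k-1}$ and the triple sum over $\alpha\delta=m$, $\beta\bmod\delta$ in the statement come from.

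First I would record this Hecke action formula precisely, being careful about the normalization of $T_m$ fixed in \eqref{eq definition Tn} and about the fact that the $P_j$ need not be $\Gamma_0(N)$-reduced or even primitive; the identity $f_{k,P}|T_{\underline\lambda}=\sum_{m}\lambda_m m^{k-1}\sum_{\alpha\delta=m,\,\delta>0}\sum_{\beta\bmod\delta} f_{k,P_{\alpha,\beta,\delta}}$ holds as an identity of meromorphic modular forms on $\H$, where $P_{\alpha,\beta,\delta}$ is the quadratic form whose associated CM point is $\frac{\alpha\tau_P+\beta}{\delta}$ (up to the usual scaling that does not affect $f_{k,\cdot}$). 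Here the condition (3) in the definition of a relation, that $\lambda_m=0$ unless $(m,N)=1$, guarantees that all the matrices involved lie in the relevant Hecke algebra and that no bad-prime subtleties at level $N$ enter. Second, since cycle integrals are linear in the integrand and depend only on the $\Gamma$-class of $A$, we may move the finite sum outside the integral and apply Theorem~\ref{theorem traces rationality} to each $f_{k,P_{\alpha,\beta,\delta}}$ individually (with the trivial relation $\mathcal{Q}=\{A\}$, $a_A=1$, which is admissible because $f_{k,A}$ is a single cusp form but we only need the \emph{formula}, valid term by term, not the vanishing hypothesis).

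Wait — that last point needs care: Theorem~\ref{theorem traces rationality} is stated under the hypothesis $\sum a_A f_{k,A}=0$, so it cannot be applied to a single $f_{k,P'}$ unless $f_{k,A}=0$. The correct route is instead to invoke the underlying identity behind Theorem~\ref{theorem traces rationality}, namely Corollary~\ref{corollary main identity}, which expresses $\mathcal{C}(f_{k,P'},A)$ as the value at $\tau_{P'}$ of an iterated derivative of the locally harmonic Maass form $\mathcal F_{1-k,A}$, decomposed into the local-polynomial part (whose contribution is the $\mathcal P_{k,A}$ term) plus Eichler integrals of $f_{k,A}$. Applying this to each $P'=P_{\alpha,\beta,\delta}$ and summing against $\lambda_m m^{k-1}$, the Eichler-integral contributions assemble into the value against $\sum_m \lambda_m (\text{something})$ of the period polynomial / Eichler integrals of $f_{k,A}$, and the defining property (2) of a relation — $\sum_m\lambda_m c_f(m)=0$ for all $f\in S_{2k}(\Gamma)$, applied to $f=f_{k,A}$ — forces these to vanish, since the Eichler integrals of $f_{k,A}$ are built from its Fourier coefficients and the Hecke action replaces $c_{f_{k,A}}(n)$ by $\sum_m\lambda_m(\cdots)c_{f_{k,A}}(mn)$-type expressions that one rewrites via Hecke eigenform theory (or directly, since $f_{k,A}|T_m$ has $m$-th-Hecke-twisted coefficients) so that the relation kills them. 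What survives is exactly $\frac{|d|^{(k-1)/2}}{|\overline\Gamma_P|}\sum_m\lambda_m m^{k-1}\sum_{\alpha\delta=m,\delta>0}\sum_{\beta\bmod\delta}\mathcal P_{k,A}\!\left(\tfrac{\alpha\tau_P+\beta}{\delta}\right)$, where I also use that $|\overline\Gamma_{P_{\alpha,\beta,\delta}}|$ effects are absorbed into the Hecke normalization (one checks the total volume factor matches $|\overline\Gamma_P|$ after summing over the $\Gamma$-orbit). Rationality and the bound on the denominator then follow from the already-noted rationality of $|d|^{(k-1)/2}\mathcal P_{k,A}(\tau)$ at CM points together with the fact that the points $\frac{\alpha\tau_P+\beta}{\delta}$ are again CM points (of discriminant $d\delta^2$ or a divisor thereof), of which there are only finitely many with bounded denominator data.

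The main obstacle I anticipate is bookkeeping: matching the combinatorics of the Hecke operator's action on quadratic forms (including the stabilizer/volume factor $|\overline\Gamma_P|$ versus $|\overline\Gamma_{P'}|$, and the possibility of imprimitive or unreduced $P'$) with the triple sum $\sum_{\alpha\delta=m}\sum_{\beta\bmod\delta}$ as written, and verifying that the Eichler-integral contributions of $f_{k,A}$ cancel \emph{exactly} because $\underline\lambda$ is a relation. The modular-forms and analytic content is entirely supplied by Corollary~\ref{corollary main identity} and the results of Section~\ref{section zeta functions}; the real work is the careful identification of the Hecke-translated forms with the CM points $\frac{\alpha\tau_P+\beta}{\delta}$ and the cancellation argument.
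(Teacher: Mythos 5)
Your proposal takes essentially the same route as the paper's proof: the paper transfers the Hecke operator from the $z$- to the $\tau$-variable via Lemma~\ref{HkHecke} combined with Lemma~\ref{lemma fkP and Hk}, which is precisely your expansion of $f_{k,P}|T_{m}$ over the translated CM points $\frac{\alpha\tau_{P}+\beta}{\delta}$ (the stabilizer and discriminant factors you flag indeed cancel against those in Corollary~\ref{corollary main identity}). From there the paper argues exactly as you outline: decompose $\mathcal{F}_{1-k,A}$ by Theorem~\ref{local}, use the Hecke-equivariance of the Eichler integrals and Lemma~\ref{cusprelation} to make the cuspidal contributions vanish, identify the surviving local-polynomial term with $\mathcal{P}_{k,A}$ via Lemma~\ref{raisingP}, and obtain rationality from Lemma~\ref{lemma P rational} since the points $\frac{\alpha\tau_{P}+\beta}{\delta}$ are CM points of discriminant $\delta^{2}d$.
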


The idea of the proof is similar as for Theorem~\ref{theorem traces rationality}. See Section~\ref{section proof theorem traces rationality 2} for the details.

\begin{example}\label{Heckexp}
Let $N = 1$. For $k=6$, we have the relation $\underline{\lambda} = (24,1,0,0,\dots)$ for $S_{12}$. By Theorem~\ref{theorem traces rationality 2} applied to $f_{6,-3} = f_{6,[1,1,1]}$, the function
\[
f_{6,-3}|T_{\underline{\lambda}} =f_{6,-3}|T_2 + 24 f_{6,-3} =  f_{6,-12} -8 f_{6,-3}
\]
has rational cycle integrals. Here we used the action of $T_{p}$ on $f_{k,D}$ as stated in \cite{bengoecheapaper}. Indeed, we have  
\begin{align*}
\renewcommand{\arraystretch}{1.2}
\begin{array}{|c||c|c|c|c|c|c|}
\hline
A & [1,1,-1] & [1,0,-2] & [1,1,-4] & [1,0,-6] & [1,1,-7] & [1,1,-8] \\
\hline\hline
\mathcal{C}(f_{6,-3}|T_{\underline{\lambda}},A) & 5952 & 44112 & 1128096 & 1186056 & 2349504 & 4070304\\
\hline
\end{array}
\end{align*} 
Similarly, for $f_{6,-7} = f_{6,[1,1,2]}$, we have
\[
f_{6,-7}|T_{\underline{\lambda}} = f_{6,-7}|T_{2}+24f_{6,-7} = f_{6,-28} +56 f_{6,-7}
\]
and
\begin{align*}
\renewcommand{\arraystretch}{1.2}
\begin{array}{|c||c|c|c|c|c|c|}
\hline
A & [1,1,-1] & [1,0,-3] & [1,1,-3] & [1,1,-4]& [1,1,-5] & [1,0,-6] \\
\hline\hline
\mathcal{C}(f_{6,-7}|T_{\underline{\lambda}},A) & 228704 & 2728656 & 7282240 & 17047968 & 15937488 & 26668656 \\
\hline
\end{array}
\end{align*}
\end{example}

%Given that the combinations $f_{k,P,\underline{\lambda}}$ have (up to a power of $\pi$) rational Fourier coefficients and rational cycle integrals, numerical evidence {\bf (Include)}, and the fact that rational Fourier coefficients imply rational cycle integrals in weight $2$ {\bf (Reference)}, we are led to the following conjecture. 

Finally, we consider cycle integrals of linear combinations of the forms $f_{k,D}$ and their twisted analogs $f_{k,\Delta,\delta}$, which we define now. For simplicity, we now assume that $N$ is odd and square-free. Let $k \geq 1$, let $\Delta$ be a discriminant with $(-1)^{k}\Delta > 0$, and let $\delta$ be a fundamental discriminant with $(-1)^{k}\delta < 0$, such that $\delta$ is a square modulo $4N$. Let $\chi_{\delta}$ be the generalized genus character on $\mathcal{Q}_{\Delta \delta}$ as defined in \cite{grosskohnenzagier}. For $k \geq 1$ we define the twisted function
\[
f_{k,\Delta,\delta}(z) := \sum_{P \in \mathcal{Q}_{\Delta \delta}/\Gamma}\chi_{\delta}(P)f_{k,P}(z).
\]
Then $f_{k,\Delta,\delta}$ is a meromorphic modular form of weight $2k$ for $\Gamma$. Suppose that 
\[
F(\tau) = \sum_{m \gg -\infty}c_{F}(m)q^m
\] 
is a weakly holomorphic modular form of weight $\frac{3}{2}-k$ for $\Gamma_{0}(4N)$ satisfying the Kohnen plus space condition, such that the Fourier coefficients $c_{F}(m)$ are rational for all $m < 0$. We will show in Proposition~\ref{RatCoef} that the Fourier coefficients of the meromorphic modular form
\begin{align}\label{eq twisted sum}
\sum_{(-1)^{k}\Delta > 0}c_{F}(-|\Delta|)f_{k,\Delta,\delta}(z)
\end{align}
are algebraic multiples of $\pi^{k-1}$. Based on extensive numerical experiments, we arrived at the following conjecture.

\begin{conjecture}\label{RatCycint}
The function in \eqref{eq twisted sum} has rational cycle integrals if it has no poles on the cycle.
\end{conjecture}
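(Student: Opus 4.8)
\begin{remark}
We sketch a possible approach to Conjecture~\ref{RatCycint} and indicate where it currently breaks down. The idea is to adapt the proofs of Theorems~\ref{theorem traces rationality} and~\ref{theorem traces rationality 2}, replacing the cusp form relation $\sum_A a_A f_{k,A}=0$ (respectively the Hecke relation $\underline{\lambda}$) by Zagier duality between weights $\frac{3}{2}-k$ and $k+\frac{1}{2}$. First, one applies the main identity of Corollary~\ref{corollary main identity} to each $f_{k,P}$ occurring in $f_{k,\Delta,\delta}=\sum_{P\in\mathcal{Q}_{\Delta\delta}/\Gamma}\chi_\delta(P)f_{k,P}$, together with the decomposition of the locally harmonic Maass form $\mathcal{F}_{1-k,A}$ into its local polynomial part (whose iterated derivative is $\mathcal{P}_{k,A}$) and the holomorphic and non-holomorphic Eichler integrals of $f_{k,A}$. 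Summing over $\mathcal{Q}_{\Delta\delta}/\Gamma$ against $\chi_\delta$ yields a twisted main identity of the shape
\begin{align*}
\mathcal{C}(f_{k,\Delta,\delta},A)=|\Delta\delta|^{\frac{k-1}{2}}\sum_{P\in\mathcal{Q}_{\Delta\delta}/\Gamma}\frac{\chi_\delta(P)}{|\overline{\Gamma}_P|}\,\mathcal{P}_{k,A}(\tau_P)\;+\;\mathcal{E}_{A,\delta}(\Delta),
\end{align*}
where $\mathcal{E}_{A,\delta}(\Delta)$ is the $\chi_\delta$-twisted sum over the CM points $\tau_P$, $P\in\mathcal{Q}_{\Delta\delta}/\Gamma$, of the iterated derivative of the Eichler-integral part of $\mathcal{F}_{1-k,A}$. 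By the rationality of $|\Delta\delta|^{\frac{k-1}{2}}\mathcal{P}_{k,A}(\tau_P)$ recorded after~\eqref{eq def local polynomial}, and since $c_F(-|\Delta|)$ vanishes for all but finitely many $\Delta$, the first term contributes a rational number (with denominator bounded in $k$ and $N$) to $\sum_{(-1)^k\Delta>0}c_F(-|\Delta|)\mathcal{C}(f_{k,\Delta,\delta},A)$. The hypothesis that the function in~\eqref{eq twisted sum} has no pole on $S_A$ guarantees that all cycle integrals in sight are genuinely convergent; checking this from the explicit location of the poles of the $f_{k,P}$ is elementary but requires a little care because of cancellations among the summands.

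It then remains to show that $\sum_{(-1)^k\Delta>0}c_F(-|\Delta|)\,\mathcal{E}_{A,\delta}(\Delta)$ is rational, and one would hope to prove that it vanishes. The mechanism should be that the generating series $\sum_{(-1)^k\Delta>0}\mathcal{E}_{A,\delta}(\Delta)q^{|\Delta|}$ is the holomorphic part of a harmonic Maass form $g_{A,\delta}$ of weight $k+\frac{1}{2}$ for $\Gamma_0(4N)$ in the Kohnen plus space, with rational principal part and constant term, whose shadow is proportional to the $\delta$-th Shintani lift of $f_{k,A}$; this is the half-integral weight counterpart, via a Shintani/Katok--Sarnak theta lift in the variable $\tau$, of the fact that $\mathcal{F}_{1-k,A}$ is locally harmonic with rational local polynomial. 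Granting such a statement, the Bruinier--Funke pairing between weights $\frac{3}{2}-k$ and $k+\frac{1}{2}$ applies: since $F$ is weakly holomorphic, $\xi_{3/2-k}(F)=0$, so the pairing $\{g_{A,\delta},F\}$, equal to the $q^0$-coefficient of the weight-$2$ (non-holomorphic) form $g_{A,\delta}F$, vanishes; unravelling this coefficient and using that the negative-index coefficients of $g_{A,\delta}$ and all coefficients of $F$ are rational gives $\sum_{(-1)^k\Delta>0}c_F(-|\Delta|)\,\mathcal{E}_{A,\delta}(\Delta)\in\Q$, and $=0$ if $g_{A,\delta}$ is cuspidal. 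Together with the previous paragraph this would prove the conjecture, and would even yield the exact rational value as a finite sum of special values $|\Delta\delta|^{\frac{k-1}{2}}\mathcal{P}_{k,A}(\tau_P)$, in the spirit of Theorems~\ref{theorem traces rationality} and~\ref{theorem traces rationality 2}.

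The main obstacle, and the reason this remains conjectural, is the modularity statement for $\sum_\Delta\mathcal{E}_{A,\delta}(\Delta)q^{|\Delta|}$. One has to iterate the raising operator $k-1$ times before evaluating at the points $\tau_P$, and it is not clear that the surviving non-holomorphic contributions reorganise into the non-holomorphic part of a harmonic Maass form with rational principal part rather than into something genuinely transcendental. Moreover, the wall-crossing discontinuities of $\mathcal{F}_{1-k,A}$ along the semicircles $S_Q$, $Q\in[A]$, may contaminate the lift when some $\tau_P$ lies near a wall, so that $g_{A,\delta}$ could turn out to be only locally harmonic of half-integral weight, with its own jumps, in which case the clean duality argument above would have to be replaced by a more delicate one. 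Resolving these analytic points, most plausibly by constructing the relevant theta lift directly and computing its Fourier expansion as in \cite{anbs}, is what a complete proof of Conjecture~\ref{RatCycint} would require.
\end{remark}
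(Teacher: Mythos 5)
Your proposal is, by your own framing, a sketch rather than a proof, and the missing step is precisely the one that matters: everything after ``Granting such a statement'' hinges on an unproved modularity claim for the generating series $\sum_{\Delta}\mathcal{E}_{A,\delta}(\Delta)q^{|\Delta|}$ of the Eichler-integral contributions, followed by a Bruinier--Funke pairing argument whose conclusion (rationality, or vanishing in the cuspidal case) you do not establish. Note also that your hoped-for outcome --- that the cycle integrals of \eqref{eq twisted sum} would equal a finite rational combination of the special values $|\Delta\delta|^{\frac{k-1}{2}}\mathcal{P}_{k,A}(\tau_P)$ up to a rational discrepancy --- is exactly what the paper's numerical experiments speak against: the authors state that the cycle integrals of \eqref{eq twisted sum} do not seem to be expressible in a simple way in terms of the functions $\mathcal{P}_{k,A}$, and that the methods of Theorems~\ref{theorem traces rationality} and~\ref{theorem traces rationality 2} (which are the methods you are adapting) appear unsuitable here. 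So the gap is not a technicality about wall-crossing or iterated raising; the expected shape of the answer itself is in doubt, and no cancellation mechanism for the Eichler terms is supplied.

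For comparison, the paper does not prove the conjecture in general either; it proves only the case $k=1$ (Theorem~\ref{weight 2}), and by a genuinely different route that never touches the local polynomials $\mathcal{P}_{k,A}$. There one observes that $\eta_{\delta}(F)=\pi i\sum_{\Delta<0}c_F(\Delta)f_{1,\Delta,\delta}(z)dz$ is a differential of the third kind with integer residues whose residue divisor is the twisted Heegner divisor $y_{\delta}(F)$ defined over $\Q(\sqrt{\delta})$; the reality of $\mathcal{F}_{0,A}$ (via Corollary~\ref{corollary main identity}) shows $\eta_{\delta}(F)$ is the \emph{canonical} differential of the third kind for $y_{\delta}(F)$; Proposition~\ref{RatCoef} gives rationality of its Fourier coefficients; and Scholl's criterion then forces a multiple of $y_{\delta}(F)$ to be principal, so that $\exp\bigl(m\int_{z_0}^{z}\eta_{\delta}(F)\bigr)$ is $\Gamma$-invariant and the periods over closed geodesics are rational. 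This argument is non-constructive (it yields no finite formula for the cycle integrals), which is consistent with the failure of the $\mathcal{P}_{k,A}$-type formula you were aiming for. If you want to pursue your lifting idea for $k\geq 2$, the natural reference point is the theta-lift construction of \cite{anbs}, but as it stands your argument does not prove the conjecture for any $k$.
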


It seems that the methods used to prove Theorem~\ref{theorem traces rationality} and Theorem~\ref{theorem traces rationality 2} are not suitable to prove the conjecture. In particular, numerical computations suggest that the cycle integrals of the function in \eqref{eq twisted sum} cannot be expressed in a simple way in terms of the functions $\mathcal{P}_{k,A}$. However, we are able to prove the conjecture in the case $k = 1$, using different methods.

\begin{theorem}\label{weight 2}
Conjecture~\ref{RatCycint} is true for $k = 1$.
\end{theorem}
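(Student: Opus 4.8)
The plan is to work directly with the explicit shape of the weight-$2$ meromorphic form $g(z) := \sum_{(-1)\Delta>0} c_F(-|\Delta|) f_{1,\Delta,\delta}(z)$ and exploit the fact that in weight $2$ a meromorphic modular form with a specified principal part at its poles is essentially determined, so that $g$ can be identified with a logarithmic derivative of a meromorphic modular function (a "weight-$2$ Eisenstein-type" object) up to an additive holomorphic weight-$2$ form. More precisely, first I would use Proposition~\ref{RatCoef} together with the hypothesis that the $c_F(m)$ are rational for $m<0$ to pin down the principal parts of $g$ at each CM point: at $\tau_Q$ for $Q\in\mathcal Q_{\Delta\delta}$ the pole of $f_{1,\Delta,\delta}$ is simple and its residue is a rational multiple of the genus character $\chi_\delta(Q)$, and the weight-$2$ holomorphic part can be controlled because $S_2(\Gamma)$ and the relevant Eisenstein contributions have rational structure. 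The key point special to $k=1$ is that a weight-$2$ meromorphic modular form with only simple poles and rational residues, summing to zero, arises as $\frac{1}{2\pi i}\,d\log\Phi$ for a meromorphic modular function $\Phi$ on $X_0(N)$ with divisor supported on CM points and at the cusps, and the input data (the $c_F(-|\Delta|)$ and $\chi_\delta$) guarantee $\Phi$ has rational (indeed algebraic, via CM theory and the theory of Borcherds/Gross--Zagier type products) values.

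Granting this identification $g = \frac{1}{2\pi i}\frac{\Phi'}{\Phi}\,$ (plus a holomorphic weight-$2$ correction handled separately), the cycle integral along the closed geodesic $S_A$ for an indefinite $A$ of non-square discriminant becomes
\[
\mathcal C(g,A) = \int_{\Gamma_A\backslash S_A} g(z)\,A(z,1)^{0}\,dz = \int_{\Gamma_A\backslash S_A} g(z)\,dz = \frac{1}{2\pi i}\int_{\Gamma_A\backslash S_A} d\log\Phi(z),
\]
which, since $\Gamma_A\backslash S_A$ is a closed loop on $X_0(N)$, is exactly the winding number of $\Phi$ along that loop — an integer — plus the contribution of the holomorphic correction term. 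The weight-$2$ Eisenstein piece integrates to a combination of $\log$'s of real algebraic numbers (values of modular units at real quadratic irrationalities, which are units by the theory of cycle integrals of $\log|\eta|$-type functions and Hecke's work), and one shows these combine to something rational using that the Fourier coefficients $c_F(m)$ for $m<0$ are rational and the relation structure; the genuinely cuspidal part of the holomorphic correction is forced to vanish by the same vanishing-of-periods mechanism used in Theorems~\ref{theorem traces rationality} and~\ref{theorem traces rationality 2}, or else contributes a rational period by Kohnen--Zagier/Katok rationality in weight $2$.

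The main obstacle I expect is establishing the identification of $g$ with a logarithmic derivative of an explicit modular unit with controlled divisor, i.e. producing the function $\Phi$ and proving its divisor is exactly what the coefficients $c_F(-|\Delta|)$ and the genus characters prescribe; this is where one must invoke a Borcherds-type lift (the weight $\frac32-k = \frac12$ case of the Borcherds/Bruinier theta lift) sending the weakly holomorphic input $F$ of weight $\tfrac12$ to a meromorphic modular function on $X_0(N)$ with divisor $\sum_\Delta c_F(-|\Delta|)\cdot(\text{CM divisor of discriminant }\Delta\delta)$, together with its behaviour at the cusps. Once $\Phi$ is in hand, the rationality of $\mathcal C(g,A)$ follows because a winding number is an integer and the residual real-analytic Eisenstein contribution is a sum of logarithms of units at real quadratic points whose product over the geodesic is a rational number — the latter being a weight-$2$ instance of the classical rationality of cycle integrals of Eisenstein-type weight-$2$ forms. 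A secondary technical point is to handle the hypothesis "no poles on the cycle": when $\Phi$ has a zero or pole on $S_A$ one must either perturb the contour (the principal-value definition in Section~\ref{section cycle integrals}) or note that the conjecture explicitly excludes this case, so for the theorem it suffices to treat the generic situation and record that the winding-number interpretation makes the integer-valued part manifestly well defined away from the divisor.
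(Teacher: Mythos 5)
Your endgame --- realizing $\pi i\,g(z)\,dz$ as $d\log\Phi$ for a meromorphic modular function $\Phi$ on $X_0(N)$ and reading off rationality of cycle integrals from winding numbers --- is indeed how the paper concludes, but the two steps you defer are exactly the substance of the proof, and your proposed ways of handling them do not work. First, the existence of $\Phi$ is not a citable black box: the twisted Borcherds lift of a weakly holomorphic $F$ of weight $\tfrac12$ (Bruinier--Ono \cite{bruinieronoheegner}) is modular only up to a unitary character which a priori may have \emph{infinite} order; equivalently, it is not automatic that any nonzero multiple of the twisted Heegner divisor $y_{\delta}(F)$ is principal. Establishing this is precisely the hard point, and both in this paper and in \cite{bruinieronoheegner} it rests on Scholl's criterion \cite{scholl}, which needs two inputs you do not supply: (i) the algebraicity of \emph{all} Fourier coefficients of $g$ (this is the actual role of Proposition~\ref{RatCoef}; you instead use it to ``pin down principal parts'', but the residues at the CM points come directly from the definition of $f_{1,\Delta,\delta}$ and have nothing to do with that proposition), and (ii) the fact that $\pi i\, g\, dz$ is the \emph{canonical} differential of the third kind for $y_{\delta}(F)$, i.e.\ that the real parts of all its periods vanish. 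The paper proves (ii) from Corollary~\ref{corollary main identity} (for $k=1$ the cycle integral $\mathcal{C}(f_{1,P},A)$ is a rational multiple of the value $\mathcal{F}_{0,A}(\tau_P)$) together with the reality of $\mathcal{F}_{0,A}$ coming from Theorem~\ref{local}, and the fact that the closed geodesics $c_A$ generate $H_1(X_0(N)\setminus y_{\delta}(F),\Z)$; nothing in your outline plays this role.

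Second, your treatment of the ``holomorphic weight-$2$ correction'' is where the argument would actually break. If $g-\frac{1}{2\pi i}\frac{\Phi'}{\Phi}$ contained a nonzero cusp form, its geodesic periods would in general be transcendental: Kohnen--Zagier/Katok rationality applies to the specific normalized forms $f_{k,A}$, not to an arbitrary element of $S_2(\Gamma_0(N))$, and the vanishing mechanism of Theorems~\ref{theorem traces rationality} and~\ref{theorem traces rationality 2} is unavailable because there is no relation hypothesis here. The reason no cuspidal correction occurs is again the canonicality statement (ii): a holomorphic differential all of whose periods have vanishing real part is zero, so once the periods of $\pi i\,g\,dz$ are known to be purely imaginary, $\pi i\,g\,dz$ and $\frac{1}{m}d\log\Phi$ can differ by nothing. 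This must be proved, not assumed. Likewise the proposed evaluation of a separate ``Eisenstein piece'' as logarithms of units is a red herring: the Eisenstein completion is already built into the definition \eqref{fPmero} of $f_{1,P}$, and no such evaluation appears or is needed. Your closing remark about poles on the cycle is fine (the conjecture excludes that case), but as written the proposal assumes the two statements that constitute the proof.
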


The proof relies on the fact that the function $\pi i f_{1,\Delta,\delta}(z)dz$ is the canonical differential of the third kind for its residue divisor on the compactified modular curve $X_{0}(N)$. Together with a rationality criterion of Scholl \cite{scholl} for such differentials we obtain Theorem~\ref{weight 2}. We refer to Section~\ref{section proof weight 2} for the proof. We remark that, unfortunately, the proof does not yield finite rational formulas for the cycle integrals of the linear combination \eqref{eq twisted sum}.

\begin{example}
We give some numerical examples of Conjecture~\ref{RatCycint}. Let $N = 1$. If $k$ is odd, we can pick $\delta=1$, such that there is no twist. 
The first odd $k$ for which there are nontrivial weight $2k$ cusp forms is $k=9$ with $S_{18} = \C\Delta E_6$. The space $S_{18}$ is isomorphic to the Kohnen plus space of weight $9+\frac{1}{2}$ under the Shimura correspondence, and the latter space is spanned by the cusp form
\[
q^3 - 2q^4 - 16q^7 + 36q^8 + O(q^{11})
\]
This implies that there is a weakly holomorphic modular form with principal part $q^{-4} + 2q^{-3}+O(1)$ in the Kohnen plus space of weight $\frac32-9$. In this case, Conjecture~\ref{RatCycint} predicts that the linear combination
\[
g := f_{9,-4}+ 2f_{9,-3}
\]
has rational cycle integrals. One can easily see that, since $k$ is odd, we have $\mathcal{C}(g,A)=0$ whenever the form $A$ is $\Gamma(1)$-equivalent to $-A$. But for quadratic forms that are not equivalent to their negatives, we obtain numerically:
\begin{align*}
\renewcommand{\arraystretch}{1.2}
\begin{array}{|c||c|c|c|c|c|c|}
\hline A & [1,1,-5] & [1,0,-6] & [1,1,-8] & [1,0,-11] &[1,0,-14] & [1,1,-14] \\
\hline\hline
\mathcal{C}(g,A)  & 3343284 & 235476 & 4350060 &   116285048 & 255683332 & 254947680 \\
\hline
\end{array}
\end{align*}

If $k$ is even, we have to introduce a twist, since $\delta <0$.
Here we consider $k=6$ and $\delta =-3$.
The weight $6+\frac12$ cusp form corresponding to $\Delta \in S_{12}$ under the Shimura correspondence is given by
\[
q - 56q^4 +120q^5 -240q^8 +9q^9 + O(q^{12}),
\]
so for example the functions 
\[
g_{1}:=f_{6,4,-3} + 56 f_{6,1,-3} \qquad \text{and} \qquad  g_{2}:=f_{6,5,-3} -120 f_{6,1,-3}
\]
should have rational cycle integrals. Indeed, it is easy to check that the function $g_{1}$ coincides with $f_{6,-3}|T_{\underline{\lambda}}$ from Example \ref{Heckexp}, so it does have rational cycle integrals by Theorem~\ref{theorem traces rationality 2}. In constrast, $g_{2}$ cannot be obtained by acting with Hecke operators, since $-15$ is squarefree. 
However, for this function we obtain numerically:
\begin{align*}
	\renewcommand{\arraystretch}{1.5}
	\begin{array}{|c||c|c|c|c|c|c|c|}
	\hline
	A & [1,1,-1] & [1,0,-2] & [1,1,-3]& [1,1,-4] & [1,1,-5] & [1,1,-7] & [1,1,-8] \\
	\hline\hline
	\mathcal{C}(g_{2},A) & -51012 & -126816 & 57876 & -2108352 & 134946 & 3813312 & -7458750 \\
	\hline
	\end{array}
\end{align*}
\end{example}

The work is organized as follows. In Section \ref{section preliminaries} we introduce the necessary functions and notation. Then we relate the cycle integrals $\mathcal{C}(f_{k,P},A)$ to locally harmonic Maass forms in Section \ref{locally}. The proofs of Theorems~\ref{theorem traces rationality}, \ref{theorem traces rationality 2}, and \ref{weight 2} are given in the remaining sections.

\section*{Acknowledgments} We thank Jan Bruinier for insightful discussions on the proof of Theorem~\ref{weight 2}. Furthermore, we thank Kathrin Bringmann for helpful comments on an earlier draft of this paper.

\section{Preliminaries}\label{section preliminaries}

\subsection{Weight $2$ Eisenstein series}
%We define the harmonic weight $2$ Eisenstein series $E_{2,\Gamma}^*$ for $\Gamma=\Gamma_0(N)$ associated to the cusp $i\infty$ to be the analytic continuation to $s=0$ of 
%$$
%E_{2,\Gamma,s}(z):=\sum_{M\in\Gamma_\infty\backslash\Gamma}y^s|_2 M
%$$
For $z = x+iy \in \H$ we define the quasimodular weight $2$ Eisenstein series for $\Gamma=\Gamma_0(N)$ associated to the cusp $i\infty$ by the conditionally convergent series
\[
E_{2,\Gamma}(z) :=1 + \sum_{c\geq 1}\sum_{\substack{d\in\Z \\ \left(\begin{smallmatrix}*&*\\c&d\end{smallmatrix}\right)\in  \Gamma_\infty\setminus\Gamma }}1|_{2}M,
\]
where $\Gamma_{\infty} := \{\pm\left(\begin{smallmatrix}1 & n \\ 0 & 1 \end{smallmatrix} \right): n\in \Z\}$ and $\left(f|_{k}\left(\begin{smallmatrix}a & b \\ c & d  \end{smallmatrix} \right)\right)(z) := (cz +d)^{-k}f\big(\frac{az + b}{cz + d}\big)$ denotes the usual weight $k$ slash operator. The function $E_{2,\Gamma}$ has a non-holomorphic modular completion
\[
E_{2,\Gamma}^*(z) := -\frac{3}{\pi [\Gamma(1):\Gamma]y} + E_{2,\Gamma}(z)
\]
that has constant term $1$ at $i\infty$ and $0$ at all other cusps. The following lemma expresses $E_{2,\Gamma}^*$ in terms of the Eisenstein series for the full modular group and follows from eq.~(9) on p.~546 of 
\cite{grosskohnenzagier}.

\begin{lemma}\label{E2N}
We have 
\begin{align*}
E_{2,\Gamma}^*(z) &= \prod_{p|N }\left(1-p^{-2}\right)^{-1}\sum_{d|N}\frac{\mu(d)}{d^2}E_{2, \Gamma(1)}^*\left(\frac{N}{d}z\right) \\
&=-\frac{3}{\pi [\Gamma(1):\Gamma]y} + 1 - 24\prod_{p|N}\left(1-p^{-2}\right)^{-1}\sum_{d|N}\frac{\mu(d)}{d^2}\sum_{n\geq 1}\sigma\left(\frac{dn}{N}\right)q^n,
\end{align*}
where $\sigma$ denotes the divisor sum function and we set $\sigma(x):=0$ for $x\notin\Z$.  
\end{lemma}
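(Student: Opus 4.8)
The statement to prove is Lemma~\ref{E2N}, which identifies the completed weight $2$ Eisenstein series $E_{2,\Gamma}^*$ for $\Gamma=\Gamma_0(N)$ with a linear combination of scaled copies of $E_{2,\Gamma(1)}^*$, and then expands it into a $q$-series. The strategy is to reduce everything to the classical level-one case via the Möbius inversion over divisors of $N$, invoking the cited formula (eq.~(9), p.~546 of \cite{grosskohnenzagier}) for the first equality, and then to carry out a bookkeeping computation for the second.

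First I would recall the classical fact that $E_{2,\Gamma(1)}^*(z) = -\frac{3}{\pi y} + E_{2,\Gamma(1)}(z)$ with $E_{2,\Gamma(1)}(z) = 1 - 24\sum_{n\geq 1}\sigma(n)q^n$, and that for $M\mid N$ the function $z\mapsto E_{2,\Gamma(1)}^*(Mz)$ is modular of weight $2$ for $\Gamma_0(N)$ (since it is annihilated by the relevant lowering operator and transforms correctly under the scaled group). The space of such completed Eisenstein-type objects attached to the cusp $i\infty$ is one-dimensional modulo cusp forms and old forms, so the linear combination $\prod_{p\mid N}(1-p^{-2})^{-1}\sum_{d\mid N}\frac{\mu(d)}{d^2}E_{2,\Gamma(1)}^*(\tfrac{N}{d}z)$ is forced once one checks it has constant term $1$ at $i\infty$ and $0$ at the other cusps; this is precisely the content of the cited equation, so for the first displayed equality I would simply cite \cite{grosskohnenzagier} and, if desired, remark briefly why the normalization is correct by computing the constant term at $i\infty$: the non-holomorphic part $-\frac{3}{\pi}\sum_{d\mid N}\frac{\mu(d)}{d^2}\cdot\frac{1}{(N/d)y}$ must equal $-\frac{3}{\pi[\Gamma(1):\Gamma]y}$, which uses the identity $\sum_{d\mid N}\frac{\mu(d)d}{N} = \frac{1}{N}\prod_{p\mid N}(1-p) $ against $[\Gamma(1):\Gamma_0(N)] = N\prod_{p\mid N}(1+p^{-1})$ and the prefactor $\prod_{p\mid N}(1-p^{-2})^{-1}$.

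Next, for the second equality I would substitute the $q$-expansion of $E_{2,\Gamma(1)}^*$ into the first line and collect terms. The non-holomorphic part gives $-\frac{3}{\pi[\Gamma(1):\Gamma]y}$ as just discussed. The constant term $1$ survives because $\prod_{p\mid N}(1-p^{-2})^{-1}\sum_{d\mid N}\frac{\mu(d)}{d^2} = \prod_{p\mid N}(1-p^{-2})^{-1}\prod_{p\mid N}(1-p^{-2}) = 1$. For the holomorphic non-constant part, write $E_{2,\Gamma(1)}^*(\tfrac{N}{d}z) = (\text{non-hol.}) + 1 - 24\sum_{m\geq 1}\sigma(m)q^{mN/d}$; the coefficient of $q^n$ in $\sum_{d\mid N}\frac{\mu(d)}{d^2}(-24)\sum_{m\geq 1}\sigma(m)q^{mN/d}$ is $-24\sum_{d\mid N}\frac{\mu(d)}{d^2}\sigma\!\left(\frac{dn}{N}\right)$, using the convention $\sigma(x)=0$ for $x\notin\Z$ — exactly the claimed formula after reinserting the Euler-product prefactor.

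**Main obstacle.** There is no serious analytic difficulty here: the heart of the lemma is the first equality, which is quoted verbatim from \cite{grosskohnenzagier}, and the rest is a careful but elementary manipulation of divisor sums and $q$-expansions. The one place demanding care is getting every normalizing constant right — in particular verifying that the Möbius-weighted sum of the non-holomorphic terms collapses to $-\frac{3}{\pi[\Gamma(1):\Gamma]y}$ with the stated index, and keeping the argument $dn/N$ (rather than $nN/d$ or $n/d$) correct when re-indexing the double sum over $d$ and $m$. I would also double-check the edge behavior of the $\sigma(x)=0$ convention so that low-order coefficients come out right, e.g.\ by testing $N$ prime against the known expansion of $E_{2,\Gamma_0(p)}^*$.
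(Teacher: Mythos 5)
Your proposal takes essentially the same route as the paper, which likewise obtains the first equality directly from eq.~(9) on p.~546 of \cite{grosskohnenzagier} and the second by substituting the level-one expansion $E_{2,\Gamma(1)}^*(z)=-\tfrac{3}{\pi y}+1-24\sum_{n\ge 1}\sigma(n)q^n$ and collecting terms, so the argument is correct and complete. One small slip in your aside: the identity needed for the non-holomorphic term is $\sum_{d\mid N}\frac{\mu(d)}{d}=\prod_{p\mid N}\left(1-p^{-1}\right)$ (not $\sum_{d\mid N}\mu(d)d=\prod_{p\mid N}(1-p)$), which combined with the prefactor $\prod_{p\mid N}\left(1-p^{-2}\right)^{-1}$ and $[\Gamma(1):\Gamma_0(N)]=N\prod_{p\mid N}\left(1+p^{-1}\right)$ gives exactly $-\tfrac{3}{\pi[\Gamma(1):\Gamma]y}$, as you conclude.
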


\subsection{Petersson's Poincar\'e series}\label{section petersson poincare}

For $z = x+iy,\tau =u+iv \in \H$ and $k \in \Z$ with $k \geq 2$ we define Petersson's Poincar\'e series
\begin{align*}
H_{k}(z,\tau) &:=\sum_{M \in \Gamma}\left(\frac{(z-\tau)(z-\overline{\tau})}{v}\right)^{-k}\Biggl|_{2k, z} M =\sum_{M \in \Gamma}\left(\frac{(z-\tau)(z-\overline{\tau})}{v}\right)^{-k}\Biggl|_{0, \tau} M,
\end{align*}
which has weight $2k$ in $z$ for $\Gamma$ and weight $0$ in $\tau$ for $\Gamma$. Furthermore, it is meromorphic as a function of $z$, and an eigenfunction of the invariant Laplace operator $\Delta_{0}$ with eigenvalue $k(1-k)$ as a function of $\tau$ for $\tau$ not lying in the $\Gamma$-orbit of $z$. The series does not converge for $k=1$. However, we can apply Hecke's trick as in \cite{bringmannkaneweight0} and define for $\Re(s)>0$
\[
H_{1,s}(z,\tau) :=\sum_{M \in \Gamma}\left(\left(\frac{(z-\tau)(z-\overline{\tau})}{v}\right)^{-1}\left(\frac{|z-\tau||z-\overline{\tau}|}{vy}\right)^{-s}\right)\Biggl|_{0, \tau} M.
\]
One can show that $H_{1,s}(z,\tau)$ has an analytic continuation $H_{1}^*(z,\tau)$ to $s=0$. 
It is not meromorphic in $z$ anymore, but the function
\begin{equation}\label{H1E2}
H_{1}(z,\tau) := H_{1}^*(z,\tau)-2\pi E_{2,\Gamma}^*(z)
\end{equation} 
is a meromorphic modular form of weight $2$ in $z$ and a harmonic Maass form of weight $0$ in $\tau$ for $\Gamma$. The function $z\mapsto H_{1}(z,\tau)$ has a simple pole when $z$ is $\Gamma$-conjugate to $\tau$.

Similarly, we define for $k\geq 2$ and $\ell \in\Z$ the function
\begin{align*}
H_{k,\ell}(z,\tau) 
%&= \sum_{M \in \Gamma}j(M,z)^{-2k}\left(\frac{(Mz-\tau)(Mz-\overline{\tau})}{v}\right)^{-k-\ell}(Mz-\tau)^{2\ell} \\
&:= \sum_{M \in \Gamma}v^{k+\ell}\left((z-\tau)^{\ell-k}(z-\overline{\tau})^{-\ell-k}\right)\Bigl|_{2k, z} M \\
&= \sum_{M \in \Gamma}v^{k+\ell}\left((z-\tau)^{\ell-k}(z-\overline{\tau})^{-\ell-k}\right)\Bigl|_{-2\ell, \tau} M.
\end{align*}
It has weight $2k$ in $z$ and weight $-2\ell$ in $\tau$ for $\Gamma$, and it also behaves nicely under the raising and lowering operators 
\begin{align*}
R_{\kappa} := 2i\frac{\partial}{\partial \tau} + \kappa v^{-1}, \qquad L_{\kappa} := -2i v^{2}\frac{\partial}{\partial \overline{\tau}},
\end{align*}
which raise and lower the weight of an automorphic form of weight $\kappa$ by $2$, respectively. The following lemma can be checked by a direct computation.

\begin{lemma}\label{raislow}
	For $k \geq 2$ and $\ell \in \Z$ we have
	\begin{align*}
	R_{-2\ell,\tau}\left(H_{k,\ell}(z,\tau)\right) &= (k-\ell)H_{k,\ell-1}(z,\tau), \\
	L_{-2\ell,\tau}\left(H_{k,\ell}(z,\tau)\right) &= (k+\ell)H_{k,\ell+1}(z,\tau).
	\end{align*}
\end{lemma}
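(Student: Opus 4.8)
\textbf{Plan for the proof of Lemma \ref{raislow}.}

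The plan is to verify the two identities by a direct termwise computation, using the fact that the raising and lowering operators $R_{-2\ell,\tau}$ and $L_{-2\ell,\tau}$ are weight-$(-2\ell)$ covariant in $\tau$, so that they commute with the slash action $|_{-2\ell,\tau}M$ in the sense that $R_{-2\ell}(g|_{-2\ell}M) = (R_{-2\ell}g)|_{-2\ell-2}M$ (and similarly for $L$). Because of this covariance it suffices to apply the operators to the single ``seed'' function
\[
\varphi_{k,\ell}(z,\tau) := v^{k+\ell}(z-\tau)^{\ell-k}(z-\overline{\tau})^{-\ell-k}
\]
and check that
\[
R_{-2\ell,\tau}\varphi_{k,\ell}(z,\tau) = (k-\ell)\,\varphi_{k,\ell-1}(z,\tau), \qquad L_{-2\ell,\tau}\varphi_{k,\ell}(z,\tau) = (k+\ell)\,\varphi_{k,\ell+1}(z,\tau);
\]
summing over $M \in \Gamma$ then gives the claim, provided one checks that the sums converge absolutely well enough (for $k \geq 2$) to justify differentiating term by term, which follows from the standard convergence estimates for these Poincar\'e series.

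For the raising operator, I would write $\tau = u+iv$, so that $\partial/\partial\tau = \tfrac12(\partial_u - i\partial_v)$, and compute $2i\,\partial_\tau \varphi_{k,\ell}$ by the product rule: the factor $v^{k+\ell}$ contributes $2i\cdot\tfrac{-i}{2}(k+\ell)v^{k+\ell-1} = (k+\ell)v^{k+\ell-1}$ times the rest; the factor $(z-\tau)^{\ell-k}$ contributes $2i\cdot(-1)(\ell-k)(z-\tau)^{\ell-k-1} = -2i(\ell-k)(z-\tau)^{\ell-k-1}$ times the rest; and $(z-\overline\tau)^{-\ell-k}$ is holomorphic in $\overline\tau$ only, hence annihilated by $\partial_\tau$. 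Adding the zeroth-order term $-2\ell v^{-1}\varphi_{k,\ell}$ and collecting, one finds after factoring out $v^{k+\ell-1}(z-\tau)^{\ell-k-1}(z-\overline\tau)^{-\ell-k}$ a scalar of the form $(k+\ell)(z-\tau) - 2i(\ell-k)v - 2\ell(z-\tau)$; using $2iv = (z-\overline\tau)-(z-\tau)$ this scalar simplifies to $(k-\ell)(z-\tau)$, which exactly produces $(k-\ell)\varphi_{k,\ell-1}$. The lowering operator is even shorter: $L_{-2\ell,\tau} = -2iv^2\partial_{\overline\tau}$ kills $v^{k+\ell}$ and $(z-\tau)^{\ell-k}$ (both independent of $\overline\tau$), and acts on $(z-\overline\tau)^{-\ell-k}$ by $-2iv^2\cdot(\ell+k)(z-\overline\tau)^{-\ell-k-1}$; since $-2iv = (z-\tau)-(z-\overline\tau)$ one has $-2iv^2 = v\big((z-\tau)-(z-\overline\tau)\big)$, but in fact it is cleaner to note directly that $-2iv^2\partial_{\overline\tau}(z-\overline\tau)^{-\ell-k} = (\ell+k)\,v^2\cdot 2i\,(z-\overline\tau)^{-\ell-k-1}$ and $-2iv = (z-\tau)-(z-\overline\tau)$ is not needed here — rather one multiplies and divides by $v$ to recognise $v^{k+\ell+1}(z-\tau)^{\ell-k}(z-\overline\tau)^{-(\ell+1)-k}$ as $\varphi_{k,\ell+1}$, with leftover scalar $(k+\ell)$.

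The only genuine subtlety — and hence the step I expect to require the most care — is the interchange of the operators with the summation over $M\in\Gamma$. For $k \geq 2$ the series defining $H_{k,\ell}$ converges absolutely and locally uniformly on compact subsets of $(z,\tau)$ avoiding the poles, and the same holds for the (finitely many) differentiated series, so differentiation under the sum is legitimate; I would cite the standard majorant (comparison with a suitable real-analytic Eisenstein-type series, as used already for $H_k(z,\tau)$ above) rather than redo it. Once termwise differentiation is justified, the covariance of $R$ and $L$ under the weight-$(-2\ell)$ slash and the seed computation above combine to give both identities immediately.
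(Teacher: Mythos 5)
Your overall strategy --- reduce to the single seed function $\varphi_{k,\ell}(z,\tau)=v^{k+\ell}(z-\tau)^{\ell-k}(z-\overline{\tau})^{-\ell-k}$, use the covariance of $R$ and $L$ with the weight $-2\ell$ slash action in $\tau$, and justify termwise differentiation of the absolutely convergent series --- is exactly the direct computation the paper has in mind (the paper records no details). But both of your seed computations contain slips, and the one for $L$ is a genuine gap. For $R_{-2\ell}$: collecting terms as you do, the scalar is $(k+\ell)(z-\tau)-2i(\ell-k)v-2\ell(z-\tau)$, and with $2iv=(z-\overline{\tau})-(z-\tau)$ this simplifies to $(k-\ell)(z-\overline{\tau})$, not $(k-\ell)(z-\tau)$. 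This matters: against the factored-out $v^{k+\ell-1}(z-\tau)^{\ell-k-1}(z-\overline{\tau})^{-\ell-k}$, the correct scalar $(k-\ell)(z-\overline{\tau})$ produces $(k-\ell)\varphi_{k,\ell-1}$, whereas your $(k-\ell)(z-\tau)$ would give $(k-\ell)v^{-1}\varphi_{k,\ell}$, which is not the claim. More seriously, for $L_{-2\ell}=-2iv^{2}\partial_{\overline{\tau}}$ you assert that $v^{k+\ell}$ is independent of $\overline{\tau}$; it is not, since $v=\frac{\tau-\overline{\tau}}{2i}$ gives $\partial_{\overline{\tau}}v=-\frac{1}{2i}$. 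Differentiating only the factor $(z-\overline{\tau})^{-\ell-k}$ yields $-2i(k+\ell)v^{k+\ell+2}(z-\tau)^{\ell-k}(z-\overline{\tau})^{-\ell-k-1}$, and no amount of multiplying and dividing by $v$ turns this into $(k+\ell)\varphi_{k,\ell+1}$: one is off by the factor $-2iv$ versus $(z-\tau)$.

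The term you dropped is precisely what repairs this: $-2iv^{2}\,\partial_{\overline{\tau}}\bigl(v^{k+\ell}\bigr)\cdot(z-\tau)^{\ell-k}(z-\overline{\tau})^{-\ell-k}=(k+\ell)v^{k+\ell+1}(z-\tau)^{\ell-k}(z-\overline{\tau})^{-\ell-k}$, and adding the two contributions and factoring out $(k+\ell)v^{k+\ell+1}(z-\tau)^{\ell-k}(z-\overline{\tau})^{-\ell-k-1}$ leaves the scalar $(z-\overline{\tau})-2iv=z-\tau$, so that indeed $L_{-2\ell}\varphi_{k,\ell}=(k+\ell)\varphi_{k,\ell+1}$. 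With these two corrections your argument is complete. Do also fix the covariance statement: it should read $R_{-2\ell}\bigl(g|_{-2\ell}M\bigr)=(R_{-2\ell}g)|_{-2\ell+2}M$, since raising increases the weight by $2$, matching the weight $-2(\ell-1)$ of $H_{k,\ell-1}$ in $\tau$, not $|_{-2\ell-2}M$. The convergence remark justifying differentiation under the $\Gamma$-sum for $k\geq 2$ is fine as you state it.
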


We are particularly interested in the function $H_{k,k-1}(z,\tau)$ (with $H_{1,0}(z,\tau):=H_{1}(z,\tau)$), which has weight $2k$ in $z$ and $2-2k$ in $\tau$. It is meromorphic in $z$ and harmonic in $\tau$ for $\tau$ not lying in the $\Gamma$-orbit of $z$, and as a function of $\tau$ it is bounded at the cusps (and vanishes at $i\infty$ if $k = 1$, compare Lemma 5.4 in \cite{bringmannkaneweight0}). Furthermore, by Lemma~\ref{raislow} it is related to $H_{k}(z,\tau)$ by
\begin{align}\label{eq Hk and Hkk-1}
R_{2-2k,\tau}^{k-1}\left(H_{k,k-1}(z,\tau)\right) = (k-1)!H_{k}(z,\tau),
\end{align}
where $R_{2-2k}^{k-1} := R_{-2} \circ \cdots \circ R_{2-2k}$ is an iterated version of the raising operator.

\subsection{Modular forms associated to quadratic forms}\label{section modular forms quadratic forms}
	
	In the introduction we defined the function $f_{k,Q_{0}}$ associated to an integral binary quadratic form $Q_{0}$ of discriminant $D \neq 0$ for $k\geq 2$. We briefly explain the definition for $k = 1$. For $s \in \C$ with $\Re(s) > 1$ we consider the series
	\begin{align*}
	f_{1,Q_{0},s}(z) := \frac{|D|^{\frac{s+1}{2}}}{2^s\pi}y^{s}\sum_{Q \in [Q_{0}]}Q(z,1)^{-1}|Q(z,1)|^{-s}.
	\end{align*}
	It converges absolutely and has a holomorphic continuation to $s = 0$. If $Q_{0} = A$ is indefinite and $D$ not a square, then 
	\[
	f_{1,A}(z) := f_{1,A,0}(z)
	\]
	is a cusp form of weight $2$ for $\Gamma$ (see \cite{grosskohnenzagier}, p. 517). 
	If $Q_{0} = P$ is positive definite, then it follows from the following lemma and \eqref{H1E2} that
	\begin{equation}\label{fPmero}
	f_{1,P}(z) := f_{1,P,0}(z) -\frac{2}{|\overline{\Gamma}_P|} E_{2, \Gamma}^{*}(z)  
	\end{equation}
	is a meromorphic modular form of weight $2$ for $\Gamma$.
	
%	We show that $f_{k,P}(z)$ can be written as a special value of the Petersson Poincar\'e series $H_{k}(z,\tau)$. 

\begin{lemma}\label{lemma fkP and Hk}
	For $k\geq 2$ and $z$ not lying in the $\Gamma$-orbit of the CM point $\tau_{P}$ we have
	\[
	f_{k,P}(z) = \frac{2^{k-1}|d|^{\frac{k-1}{2}}}{|\overline{\Gamma}_{P}|\pi}H_{k}(z,\tau_{P})
	\]
	and 
	$$
	f_{1,P,0}(z) = \frac{1}{|\overline{\Gamma}_{P}|\pi}H_{1}^*(z,\tau_{P}).
	$$
\end{lemma}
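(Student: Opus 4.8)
The plan is to prove the identity $f_{k,P}(z) = \frac{2^{k-1}|d|^{(k-1)/2}}{|\overline{\Gamma}_P|\pi}H_k(z,\tau_P)$ by directly comparing the defining sums over the $\Gamma$-class $[P]$, term by term. The key observation is that there is a natural bijection between the quadratic forms $Q \in [P]$ and the cosets $M \in \overline{\Gamma}_P\backslash\Gamma$ (or rather $\Gamma/\overline{\Gamma}_P$, depending on conventions), given by $Q \mapsto P\circ M$, since the stabilizer of $P$ in $\Gamma/\{\pm 1\}$ is exactly $\overline{\Gamma}_P$. Under this correspondence, I would express $Q(z,1)$ for $Q = P\circ M$ in terms of $P$ evaluated at $Mz$, using the standard transformation $P(Mz,1) = (cz+d)^{-2}(P\circ M)(z,1)$ for $M = \left(\begin{smallmatrix} a & b \\ c & d\end{smallmatrix}\right)$.

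The first concrete step is to compute $P(z,1)$ for a positive definite $P = [a,b,c]$ of discriminant $d < 0$ explicitly in terms of its CM point $\tau_P$. Since $\tau_P$ is the root of $P(\tau,1) = 0$ in $\H$, one factors $P(z,1) = a(z-\tau_P)(z-\overline{\tau_P})$, and the imaginary part is $\Im(\tau_P) = \frac{\sqrt{|d|}}{2|a|} = \frac{\sqrt{|d|}}{2a}$ (using $a > 0$ for positive definite forms). Therefore $P(z,1) = a(z-\tau_P)(z-\overline{\tau_P})$ and one can write
\[
P(z,1)^{-k} = a^{-k}\big((z-\tau_P)(z-\overline{\tau_P})\big)^{-k} = a^{-k}v^k\left(\frac{(z-\tau_P)(z-\overline{\tau_P})}{v}\right)^{-k},
\]
where $v = \Im(\tau_P)$. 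Since $a = \frac{\sqrt{|d|}}{2v}$, we get $a^{-k}v^k = \left(\frac{2v}{\sqrt{|d|}}\right)^k v^k = \frac{2^k v^{2k}}{|d|^{k/2}}$... wait, I should be careful: $a^{-k}v^k = (2v/\sqrt{|d|})^k \cdot v^k$ only if $a^{-1} = 2v/\sqrt{|d|}$, which gives $a^{-k}v^k = 2^k v^{2k}/|d|^{k/2}$. Hmm, this doesn't look quite right dimensionally against the claimed constant $2^{k-1}|d|^{(k-1)/2}/(|\overline{\Gamma}_P|\pi)$; I expect the $v^{2k}$ to be absorbed by the weight-$0$-in-$\tau$ nature of $H_k$ and the Legendre-type normalization. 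The cleaner route is to use Lemma~\ref{lemma fkP and Hk}'s own companion structure: write $H_k(z,\tau_P) = \sum_{M\in\Gamma}\left(\frac{(z-\tau_P)(z-\overline{\tau_P})}{v}\right)^{-k}\big|_{2k,z}M$, note the summand is invariant under $M\mapsto \gamma M$ for $\gamma\in\overline{\Gamma}_P$ (because $\gamma$ fixes $\tau_P$ up to the weight-$0$ slash action in $\tau$, which leaves $(z-\tau_P)(z-\overline{\tau_P})/v$ invariant), so the sum over $\Gamma$ equals $|\overline{\Gamma}_P|$ times the sum over $\overline{\Gamma}_P\backslash\Gamma$, and then match the latter sum against $\sum_{Q\in[P]}Q(z,1)^{-k}$ using the bijection above together with the explicit factor $a^{-k}v^k$ computed from $d = b^2 - 4ac$.

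The main obstacle I anticipate is bookkeeping the constants and the $\pm 1$ / stabilizer issues correctly: one must track that $\Gamma$ acts through $\Gamma/\{\pm1\}$ on $\H$ and on $\mathcal{Q}_d$, that $-I$ acts trivially on the summand of $H_k$ (since the weight $2k$ is even and $-I$ fixes $\tau_P$), and that the orbit-counting therefore produces exactly $|\overline{\Gamma}_P|$ rather than $2|\overline{\Gamma}_P|$ or $|\overline{\Gamma}_P|/2$. One also needs to confirm that $a^{-k}v^k\big|_{2k,z}M$ combines with the $\Gamma$-action on $P$ to reproduce $(P\circ M)(z,1)^{-k}$ with no stray powers of $v$ — here the point is that $\left(\frac{(z-\tau_P)(z-\overline{\tau_P})}{v}\right)^{-k}$ is homogeneous of weight $0$ in $\tau_P$, so replacing $\tau_P$ by $M^{-1}\tau_P$ and using $\Im(M^{-1}\tau_P)$ correctly converts the $v$-power, ultimately yielding the bare $Q(z,1)^{-k}$ with the constant $a^{-k}v^k = 2^k v^{2k} |d|^{-k/2}$ reorganized as $2^{k-1}|d|^{(k-1)/2}$ after combining with the $|D|^{k-1/2}/\pi$ in the definition of $f_{k,P}$ and dividing by $|\overline{\Gamma}_P|$. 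For $k=1$ the same computation goes through with the Hecke-regularized series $f_{1,P,0,s}(z)$ and $H_{1,s}(z,\tau)$, taking analytic continuation to $s=0$ on both sides; since the regularizing factors $|Q(z,1)|^{-s}$ and $(|z-\tau||z-\overline{\tau}|/(vy))^{-s}$ match up under the same bijection, the identity $f_{1,P,0}(z) = \frac{1}{|\overline{\Gamma}_P|\pi}H_1^*(z,\tau_P)$ follows by uniqueness of analytic continuation, with no Eisenstein correction appearing until one passes to the genuinely modular $f_{1,P}$ via \eqref{fPmero} and \eqref{H1E2}.
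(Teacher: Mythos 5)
Your plan is essentially the paper's own proof: the paper writes $P(z,1) = \sqrt{|d|}\,\frac{(z-\tau_P)(z-\overline{\tau}_P)}{2\Im(\tau_P)}$, substitutes this into the slash-in-$z$ form of $H_k(z,\tau_P)$, identifies the resulting sum over $M\in\Gamma$ with the orbit sum defining $f_{k,P}$, and for $k=1$ matches $H_{1,s}$ with $f_{1,P,s}$ and continues analytically to $s=0$, exactly as you propose. Two bookkeeping points in your sketch should be fixed, though neither affects the viability of the method. First, the correct manipulation is $P(z,1)^{-k} = (av)^{-k}\left(\frac{(z-\tau_P)(z-\overline{\tau}_P)}{v}\right)^{-k} = 2^{k}|d|^{-k/2}\left(\frac{(z-\tau_P)(z-\overline{\tau}_P)}{v}\right)^{-k}$ with $v=\Im(\tau_P)$ and $av=\sqrt{|d|}/2$; your $a^{-k}v^{k}$, and hence the stray $v^{2k}$ you worried about, comes from inverting the quotient the wrong way, and nothing needs to be absorbed afterwards. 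Second, the map $M\mapsto P\circ M$ from $\Gamma$ onto $[P]$ (the series for $H_k$ runs over all of $\Gamma$, so $\pm M$ both occur and give the same summand) has fibers of size $2|\overline{\Gamma}_P|$, not $|\overline{\Gamma}_P|$, since the stabilizer of $P$ in $\Gamma$ contains $-I$; that extra factor $2$ is precisely what turns $2^{-k}|d|^{k/2}$ into the $2^{1-k}$ (equivalently the $2^{k-1}$ after inverting) in the stated constant, so your anticipated count of exactly $|\overline{\Gamma}_P|$ would leave the final identity off by a factor of $2$.
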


\begin{proof}
	We have the formula
	\[
	P(z,1) = \sqrt{|d|}\frac{(z-\tau_{P})(z-\overline{\tau}_{P})}{2\im(\tau_{P})}.
	\]
	Hence, for $k \geq 2$ we get
	\begin{align*}
	H_{k}(z,\tau_{P}) &= \sum_{M \in \Gamma}j(M,z)^{-2k}\left(\frac{(Mz-\tau_{P})(Mz-\overline{\tau}_{P})}{\im(\tau_{P})}\right)^{-k} \\
	&= 2^{-k}|d|^{\frac{k}{2}}\sum_{M \in \Gamma}j(M,z)^{-2k}P(Mz,1)^{-k} 
%	&= 2^{-k}|d|^{k/2}\sum_{M \in \Gamma}(P\circ M)(z,1)^{-k} \\
	= 2^{1-k}|d|^{\frac{1-k}{2}}|\overline{\Gamma}_{P}|\pi f_{k,P}(z).
	\end{align*}
	For $k=1$ we can show in the same way that $H_{1,s}(z,\tau_P)$ is a multiple of $f_{1,P,s}(z)$ and then use analytic continuation. 
\end{proof}

%In this section we give several different formulas for the Fourier expansion of $f_{k,P}$ for positive definite $P \in \mathcal{Q}_{d}$. We start with the Fourier expansion of $f_{k,P}$ as an infinite series involving Sali\'e sums and Bessel functions. The proof is similar to the proof of Proposition 2.2 from \cite{bengoechea}, so we omit it.

We will also need the Fourier expansion of $f_{k,P}$. The proof of the following formula is analogous to the proof of Proposition 2.2 from \cite{bengoecheapaper} (correcting a sign error), but additionally uses \eqref{fPmero} and Lemma \ref{E2N} in case that $k=1$.

\begin{proposition}\label{prop fkP Fourier expansion}
	For $k \geq 2$ and $z \in \H$ with $y > \sqrt{|d|}/2$ we have the Fourier expansion
	\[
	f_{k,P}(z) = \sum_{n \geq 1}c_{f_{k,P}}(n)e^{2\pi i n z},
	\]
	where
	\[
	c_{f_{k,P}}(n) = \frac{(-1)^{k}2^{k+\frac{1}{2}}\pi^{k}}{(k-1)!}|d|^{\frac{k}{2}-\frac{1}{4}}n^{k-\frac{1}{2}}\sum_{\substack{a \geq 1 \\ N \mid a}}a^{-\frac{1}{2}}S_{a,P}(n)I_{k-\frac{1}{2}}\left( \frac{\pi n\sqrt{|d|}}{a}\right),
	\]
	with the usual $I$-Bessel function and the exponential sum
	\[
	S_{a,P}(n) := \sum_{\substack{b \!\!\!\!\pmod{2a} \\ b^{2} \equiv d \!\!\!\!\pmod{4a} \\ \left[a,b,\frac{b^{2}-d}{4a}\right] \in [P]}} e\left(\frac{n b}{2a}\right).
	\]
	For $k = 1$ the formula is analogous, but we have to add 
	\[
	\frac{12}{|\overline{\Gamma}_P|}\prod_{p|N}\left(1-p^{-2}\right)^{-1}\sum_{d|N}\frac{\mu(d)}{d^2}\sigma\left(\frac{dn}{N}\right)
	\]	
	to $c_{f_{1,P}}(n)$, and we get a constant term $c_{f_{1,P}}(0) = -\frac{2}{|\overline{\Gamma}_P|}$.
\end{proposition}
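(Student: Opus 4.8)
The goal is to compute the Fourier expansion of $f_{k,P}$, so I will mimic the classical Rankin--Selberg-type unfolding used by Bengoechea for the level one case. The plan is to start from the definition $f_{k,P}(z) = \frac{|d|^{k-1/2}}{\pi}\sum_{Q \in [P]}Q(z,1)^{-k}$ and organize the sum over $Q = [a,b,c] \in [P]$ by the leading coefficient $a$ (which is a positive multiple of $N$ since $P$ is positive definite and $N \mid a$), then by $b \pmod{2a}$ subject to $b^2 \equiv d \pmod{4a}$, and finally over the remaining translation parameter, which runs over all $b' \equiv b \pmod{2a}$. Writing $Q(z,1) = a z^2 + bz + c$ and completing the square, one gets $Q(z,1) = a(z + \frac{b}{2a})^2 + \frac{d}{4a}$ up to the usual normalization, so the innermost sum becomes a sum of the shape $\sum_{b' \equiv b (2a)} \big(a(z + \tfrac{b'}{2a})^2 + \tfrac{d}{4a}\big)^{-k}$.

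Next I would apply the Lipschitz-type summation formula (Poisson summation in the translation variable), which converts $\sum_{m \in \Z}\big((z+m)^2 + t^2\big)^{-k}$ into a series in $e^{2\pi i n z}$ with coefficients given by a $K$-Bessel (or, after the sign of the discriminant is accounted for, $I$-Bessel) integral; here, because $d < 0$ and $P$ is positive definite, the relevant quadratic expression has the ``wrong'' sign and one picks up $I_{k-1/2}$ rather than $K_{k-1/2}$, which is exactly the source of the growth/meromorphy of $f_{k,P}$. The congruence class of $b$ modulo $2a$ contributes the exponential factor $e(nb/2a)$, and summing over the admissible $b$ with $[a,b,\frac{b^2-d}{4a}] \in [P]$ produces precisely the Kloosterman-like sum $S_{a,P}(n)$. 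Collecting the elementary constants from the $I$-Bessel integral, the factor $|d|^{k-1/2}/\pi$, and the completion of the square yields the stated coefficient $c_{f_{k,P}}(n)$; the restriction $y > \sqrt{|d|}/2$ is what guarantees $z$ lies above all the semicircles $S_Q$ for $Q \in [P]$, so the series converges in the relevant region and the expansion is genuinely holomorphic there.

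For $k = 1$ the bare series does not converge, so instead I would run the same unfolding on $f_{1,P,0}(z) = \lim_{s \to 0} f_{1,P,s}(z)$, using the analytically continued Hecke-trick series and Lemma~\ref{lemma fkP and Hk}; the $s$-deformed Lipschitz formula has an extra term coming from the non-holomorphic completion, and taking $s \to 0$ one recovers the same Bessel-sum shape plus the contribution of $E_{2,\Gamma}^*$. By \eqref{fPmero} we must then subtract $\frac{2}{|\overline{\Gamma}_P|}E_{2,\Gamma}^*(z)$, and invoking Lemma~\ref{E2N} for the Fourier expansion of $E_{2,\Gamma}^*$ supplies exactly the additional divisor-sum term $\frac{12}{|\overline{\Gamma}_P|}\prod_{p \mid N}(1-p^{-2})^{-1}\sum_{d \mid N}\frac{\mu(d)}{d^2}\sigma(dn/N)$ and the constant term $c_{f_{1,P}}(0) = -\frac{2}{|\overline{\Gamma}_P|}$, the $-3/(\pi[\Gamma(1):\Gamma]y)$ piece of $E_{2,\Gamma}^*$ being cancelled by the corresponding non-holomorphic contribution from the Hecke trick.

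The main obstacle is bookkeeping rather than conceptual: one must carefully track the sign conventions in the Lipschitz summation formula when the quadratic form inside the power is negative definite (so that the Fourier transform lands on $I$-Bessel functions with the correct argument $\pi n \sqrt{|d|}/a$), and correctly interchange the limit $s \to 0$ with the Poisson summation in the $k = 1$ case to isolate the completion term. This is precisely where Bengoechea's Proposition 2.2 had a sign error, so the delicate point is to redo that computation cleanly; everything else reduces to the absolute convergence afforded by $y > \sqrt{|d|}/2$ and the standard evaluation of the Bessel integral $\int_{\R}(u^2+t^2)^{-k}e^{-2\pi i n u}\,du$.
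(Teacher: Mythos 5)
Your proposal takes essentially the same route as the paper, which itself only sketches the argument by referring to Bengoechea's Proposition 2.2 (unfolding the sum over $Q\in[P]$ by $a$, $b\pmod{2a}$ and the translation variable, completing the square, and applying Poisson/Lipschitz summation to produce the $I$-Bessel factor and the exponential sums $S_{a,P}(n)$), with the sign corrected, and for $k=1$ the same combination of the Hecke trick, \eqref{fPmero} and Lemma~\ref{E2N} that you describe. One minor slip: for positive definite $Q\in[P]$ the sets $S_Q$ are empty, so the hypothesis $y>\sqrt{|d|}/2$ should be read as placing $z$ above all CM points $\tau_Q$ (the poles), which is what guarantees convergence of the expansion.
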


\subsection{Zeta functions associated to indefinite quadratic forms}
\label{section zeta functions}

Let $A \in \mathcal{Q}_{D}$ be an indefinite quadratic form of non-square discriminant $D > 0$. We define the associated zeta function
\begin{equation}\label{zetadef}
\zeta_{\Gamma,A}(s) := \sum_{\substack{\left(\begin{smallmatrix}m & m_{0} \\ n & n_{0} \end{smallmatrix}\right) \in \Gamma_{A}\backslash \Gamma/\Gamma_{\infty} \\ A(m,n) > 0}}A(m,n)^{-s} =\sum_{\substack{(m,n) \in \Z^{2}/\Gamma_{A}^{t} \\ N| n, (m,n) = 1\\ A(m,n) > 0}}A(m,n)^{-s}.
\end{equation}
The series converges absolutely for $\Re(s) > 1$ and it only depends on the $\Gamma$-equivalence class of $A$. For $N = 1$ and a fundamental discriminant $D > 0$ the function $\zeta(2s)\zeta_{\Gamma(1),A}(s)$ is the usual zeta function of the ideal class in $\Q(\sqrt{D})$ associated to $A$. The above zeta function can also be written as a Dirichlet series
	\[
	\zeta_{\Gamma,A}(s) = \sum_{\substack{a > 0 \\ N \mid a}}\frac{n_{A}(a)}{a^{s}}, \qquad n_{A}(a) := \#\left\{b \!\!\!\! \pmod{2a}, b^{2}\equiv D \!\!\!\! \pmod{4a}, \left[a,b,\frac{b^{2}-D}{4a}\right] \in [A]\right\},
	\]
	compare \cite{zagierzetafunctions}, Proposition 3 (i). 
%	From this it is easy to see that
%\begin{align*}
%\zeta_{N,\overline{A}}(s) = \zeta_{N,A}(s) \quad \text{and} \quad \zeta_{N,-A}(s) = \zeta_{N,A^{*}}(s),
%\end{align*}
%where $\overline{A} = [a,-b,c]$ and $A^{*} = [-a,b,-c] = -\overline{A}$.

We first express $\zeta_{\Gamma,A}(s)$ in terms of zeta functions $\zeta_{\Gamma(1),A_{d}}(s)$ for level $1$ and suitable quadratic forms $A_{d}$.

\begin{lemma}\label{lemma zeta level lowering}
	For $\Re(s) > 1$ we have 
	\[
	\zeta_{\Gamma,A}(s) = N^{-s}\prod_{p \mid N}(1-p^{-2s})^{-1}\sum_{d \mid N}\frac{\mu(d)}{d^{s}}\left[\Gamma(1)_{A_{N/d}}:\Gamma_{0}(d)_{A_{N/d}}\right]\zeta_{\Gamma(1),A_{N/d}}(s),
	\]
	where $A_{d} := [a/d,b,dc]$ for $A = [a,b,c]$.
\end{lemma}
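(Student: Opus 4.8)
The identity to be proved expresses the level-$N$ zeta function $\zeta_{\Gamma,A}(s)$ as a finite combination of level-$1$ zeta functions $\zeta_{\Gamma(1),A_{N/d}}(s)$. My plan is to reduce everything to the Dirichlet series expansions
\[
\zeta_{\Gamma,A}(s) = \sum_{\substack{a > 0 \\ N\mid a}} \frac{n_{A}(a)}{a^{s}}, \qquad \zeta_{\Gamma(1),A'}(s) = \sum_{a > 0} \frac{n_{A'}(a)}{a^{s}},
\]
already recorded in the excerpt, and to compare the arithmetic functions $n_A(a)$ on the two sides coefficient by coefficient. Concretely, for $a$ with $N\mid a$ I would analyze the set counted by $n_A(a)$, namely residues $b\pmod{2a}$ with $b^2\equiv D\pmod{4a}$ such that $[a,b,(b^2-D)/4a]$ lies in the $\Gamma_0(N)$-class $[A]$, and I would relate this, via the substitution $Q=[a,b,c]\mapsto Q_d = [a/d,b,dc]$, to the count for the form $A_{N/d}$ at level $1$. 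The bijection $[a,b,c]\leftrightarrow[a/d,b,dc]$ sends $\mathcal{Q}_D$ with $N\mid a$ to quadratic forms of the same discriminant, and the condition "$N\mid a$'' corresponds after extracting the part of $a$ sharing factors with $N$ to a local condition at each prime $p\mid N$.

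**Execution.** First I would treat the case $N = p$ prime as a model computation: write $a = p^j a'$ with $p\nmid a'$, and observe that for $j\ge 1$ the $\Gamma_0(p)$-class of $[a,b,c]$ versus the $\Gamma(1)$-class of $[a/p,b,pc]$ differ in a controlled way, giving the recursion encoded by the factor $(1-p^{-2s})^{-1}(1 - p^{-s}[\Gamma(1)_{A_1}:\Gamma_0(p)_{A_1}]\cdot(\text{shift}))$ — this is exactly eq.~(9) on p.~546 of Gross–Kohnen–Zagier that Lemma~\ref{E2N} also relied on, so I would cite that source and adapt its argument to the indefinite setting. Second, I would bootstrap from $N$ prime to $N$ squarefree by multiplicativity: since the conditions at distinct primes dividing $N$ are independent (local at each $p$), the Euler-product-type factor $\prod_{p\mid N}(1-p^{-2s})^{-1}$ and the Möbius sum $\sum_{d\mid N}\mu(d)d^{-s}(\cdots)$ both factor over $p\mid N$, and the index $[\Gamma(1)_{A_{N/d}}:\Gamma_0(d)_{A_{N/d}}]$ is the natural normalization accounting for the change in the stabilizer when passing between levels. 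The overall $N^{-s}$ prefactor comes from the normalization $n_A(a)$ being indexed by $a$ with $N\mid a$ on the left but by the rescaled $a/d$ on the right.

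**Main obstacle.** The delicate point is the appearance and correctness of the stabilizer index $[\Gamma(1)_{A_{N/d}}:\Gamma_0(d)_{A_{N/d}}]$: the zeta functions are sums over $\Gamma_A\backslash\Gamma/\Gamma_\infty$ (equivalently over primitive vectors mod $\Gamma_A^t$), and when one lowers the level the orbit space changes not just by the index $[\Gamma(1):\Gamma_0(d)]$ but by how the stabilizer $\Gamma_A$ interacts with $\Gamma_0(d)$, which depends on $A$ through $A_{N/d}$. I expect the bulk of the work to be a careful bookkeeping of these double cosets — showing that the map $[a,b,c]\mapsto[a/d,b,dc]$ induces, on the relevant quotients, a map whose fiber sizes are exactly $[\Gamma(1)_{A_{N/d}}:\Gamma_0(d)_{A_{N/d}}]$ up to the Euler factors — together with checking convergence so that rearrangement of the (absolutely convergent, for $\Re(s)>1$) Dirichlet series is legitimate. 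Once the prime case is pinned down against Gross–Kohnen–Zagier, the squarefree case should follow formally by the independence of local conditions.
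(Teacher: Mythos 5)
Your write-up is a plan rather than a proof, and the plan leaves exactly the crucial step unproved. The heart of the lemma is the relation between the $\Gamma_0(N)$-class counting that defines $\zeta_{\Gamma,A}$ and the $\Gamma(1)$-class counting for $A_{N/d}$, including the precise appearance of the stabilizer index $\left[\Gamma(1)_{A_{N/d}}:\Gamma_{0}(d)_{A_{N/d}}\right]$. You correctly identify this as the delicate point, but you then defer it to "careful bookkeeping" that you "expect" to work, anchored to eq.~(9) on p.~546 of Gross--Kohnen--Zagier. That equation is the identity for the weight $2$ Eisenstein series $E_{2,\Gamma}^*$ used in Lemma~\ref{E2N}; it shares the cosmetic shape $\prod_{p\mid N}(1-p^{-2s})^{-1}\sum_{d\mid N}\mu(d)d^{-s}(\cdots)$ but says nothing about zeta functions attached to $\Gamma_0(N)$-classes of indefinite forms, so "adapting its argument" is not an available shortcut --- it is precisely the computation you would still have to do. A second gap: your reduction "prime case, then squarefree $N$ by independence of local conditions" is not justified, because the condition $Q\in[A]$ is a global $\Gamma_0(N)$-equivalence condition (a $\Gamma(1)$-class of forms with $N\mid a$ generally splits into several $\Gamma_0(N)$-classes, governed by genus-theoretic data), and this splitting does not factor over the primes dividing $N$ in any obvious way. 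Moreover the lemma is stated, and later used in Proposition~\ref{proposition zeta level lowering}, for arbitrary $N\in\N$, not just squarefree $N$, so even a completed version of your argument would fall short of the statement.

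For contrast, the paper's proof avoids all local analysis and class bookkeeping by working directly with the lattice-point description \eqref{zetadef}: multiply by $\zeta_N(2s)$ to remove the primitivity condition $(m,n)=1$ (leaving only $(m,N)=1$), apply M\"obius inversion over $d\mid N$ to remove that coprimality, substitute $(m,n)=(a,\tfrac{N}{d}b)$ so that $A(m,n)=\tfrac{N}{d}A_{N/d}(a,b)$, and finally pass from $\Z^2/\Gamma_0(d)_{A_{N/d}}^t$ to $\Z^2/\Gamma(1)_{A_{N/d}}^t$, which is where the index $\left[\Gamma(1)_{A_{N/d}}:\Gamma_{0}(d)_{A_{N/d}}\right]$ enters transparently; dividing by $\zeta_N(2s)$ gives the Euler factor $\prod_{p\mid N}(1-p^{-2s})^{-1}$. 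This is a short, uniform argument for all $N$, and it is the kind of explicit manipulation your proposal would need to supply in place of the appeal to Gross--Kohnen--Zagier and the unproven multiplicativity.
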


\begin{proof}
	Let $\zeta_{N}(s) = \sum_{\substack{\alpha = 1, (\alpha,N) = 1}}^{\infty}\alpha^{-s}$. We have
	\begin{align*}
	\zeta_{N}(2s)\zeta_{\Gamma,A}(s) &= \sum_{\substack{\alpha = 1 \\ (\alpha,N) = 1}}^{\infty}\sum_{\substack{(m,n) \in \Z^{2}/\Gamma_{0}(N)_{A}^{t} \\ N\mid n, (m,n) = 1  \\ A(m,n) > 0}}A(\alpha m, \alpha n)^{-s} = \sum_{\substack{(m,n) \in \Z^{2}/\Gamma_{0}(N)_{A}^{t} \\ N\mid n, (m,N) =1  \\ A(m,n) > 0}}A(m, n)^{-s} \\
	&= \sum_{d \mid N}\mu(d)\sum_{\substack{(m,n) \in \Z^{2}/\Gamma_{0}(N)_{A}^{t} \\ N\mid n, d \mid m  \\ A(m,n) > 0}}A(m, n)^{-s}= \sum_{d \mid N}\frac{\mu(d)}{d^{2s}}\sum_{\substack{(m,n) \in \Z^{2}/\Gamma_{0}(N)_{A}^{t} \\ \frac{N}{d}\mid n  \\ A(m,n) > 0}}A(m, n)^{-s}.
	\end{align*}
	If $(a,b)$ runs through $\Z^{2}/\Gamma_{0}(d)_{A_{N/d}}^{t}$ then $(m,n) = (a,\frac{N}{d}b)$ runs through $\Z^{2}/\Gamma_{0}(N)_{A}^{t}$ with $\frac{N}{d}\mid n$. Hence we obtain
	\begin{align*}
	\sum_{\substack{(m,n) \in \Z^{2}/\Gamma_{0}(N)_{A}^{t} \\ \frac{N}{d}\mid n, \, A(m,n) > 0}}A(m, n)^{-s} 
	&= \sum_{\substack{(m,n) \in \Z^{2}/\Gamma_{0}(d)_{A_{N/d}}^{t} \\ A(m,\frac{N}{d}n) > 0}}A(m, \tfrac{N}{d}n)^{-s} \\
	&= \left(\frac{N}{d}\right)^{-s}\sum_{\substack{(m,n) \in \Z^{2}/\Gamma_{0}(d)_{A_{N/d}}^{t} \\ A_{N/d}(m,n) > 0}}A_{N/d}(m,n)^{-s} \\
	&= \left[\Gamma(1)_{A_{N/d}}:\Gamma_{0}(d)_{A_{N/d}}\right]N^{-s}d^{s}\zeta(2s)\zeta_{\Gamma(1),A_{N/d}}(s).
	\end{align*}
	Using $\frac{\zeta(2s)}{\zeta_{N}(2s)} =\prod_{p \mid N}(1-p^{-2s})^{-1}$ we obtain the stated formula.
\end{proof}

The following result concerns the rationality of the special values of $\zeta_{\Gamma,A}(s)$ at positive integers.

\begin{proposition}\label{proposition zeta level lowering}
	The expression 
	\[
	D^{k-\frac{1}{2}}\left(\zeta_{\Gamma,A}(k)+(-1)^{k}\zeta_{\Gamma,-A}(k) \right)
	\]
	is rational for any $k \geq 2$, any non-square discriminant $D > 0$, and $N \in \N$.
\end{proposition}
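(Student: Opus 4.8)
The plan is to reduce to level one using Lemma~\ref{lemma zeta level lowering}, and then to obtain the level-one case from the functional equation of the ideal-class zeta function of $\Q(\sqrt{D})$ together with Siegel's theorem on the rationality of its values at negative integers.

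For the reduction: in Lemma~\ref{lemma zeta level lowering} the forms $A_{N/d}=[ad/N,\,b,\,Nc/d]$ (for $A=[a,b,c]$ with $N\mid a$ and $d\mid N$) are integral of discriminant $b^{2}-4(ad/N)(Nc/d)=b^{2}-4ac=D$, one has $(-A)_{N/d}=-(A_{N/d})$, and the factors $N^{-k}\prod_{p\mid N}(1-p^{-2k})^{-1}\mu(d)d^{-k}[\Gamma(1)_{A_{N/d}}:\Gamma_{0}(d)_{A_{N/d}}]$ are rational. Hence $D^{k-\frac12}(\zeta_{\Gamma,A}(k)+(-1)^{k}\zeta_{\Gamma,-A}(k))$ is a rational linear combination of the analogous level-one quantities $D^{k-\frac12}(\zeta_{\Gamma(1),A_{N/d}}(k)+(-1)^{k}\zeta_{\Gamma(1),-A_{N/d}}(k))$, and we may assume $N=1$. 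We may moreover assume $D$ fundamental; for $D=f^{2}D_{0}$ with $D_{0}$ fundamental one argues identically with the zeta functions of the order of conductor $f$ (to which Siegel's theorem still applies), or reduces to $D_{0}$ by a standard identity.

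Now let $N=1$ and $D$ fundamental. As recalled before Lemma~\ref{lemma zeta level lowering}, $\zeta(2s)\zeta_{\Gamma(1),A}(s)$ is the zeta function of the narrow ideal class attached to $A$. The point of the combination $\zeta_{\Gamma(1),A}(k)+(-1)^{k}\zeta_{\Gamma(1),-A}(k)$ is that, after multiplying by $\zeta(2s)$, it is a Dirichlet series admitting a clean functional equation $s\leftrightarrow 1-s$: for $k$ even it is (a rational multiple of) the partial zeta function of the \emph{wide} ideal class of $A$, with archimedean factor $\pi^{-s}\Gamma(s/2)^{2}$; for $k$ odd it is the $L$-function of that wide class twisted by the sign character at both real places, with archimedean factor $\pi^{-s-1}\Gamma(\tfrac{s+1}{2})^{2}$ and root number $\pm1$ (when this twisted combination vanishes identically there is nothing to prove). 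In both cases the functional equation expresses the value at $s=k$ as $(\text{rational})\cdot\pi^{2k}\cdot D^{\frac12-k}$ times the value of the same Dirichlet series at $s=1-k$: the factor $\pi^{2k}$ arises because, for the relevant parity of $k$, the archimedean factor at the negative argument is a rational multiple of $\pi$ (rather than of a half-integral power of $\pi$), the remaining $\Gamma$-values being rational and the $D$-power coming from $D^{s/2}$. Finally, the value at $s=1-k$ is rational by Siegel's theorem on the rationality of values of (ray class) zeta functions of totally real fields at negative integers (cf.\ \cite{zagierzetafunctions} for the real quadratic case), and it is not forced to vanish there precisely because of the parity. Dividing by $\zeta(2k)\in\pi^{2k}\Q^{\times}$ and multiplying by $D^{k-\frac12}$ cancels all irrational and transcendental factors, giving the asserted rational number.

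I expect the main difficulty to be bookkeeping rather than conceptual: matching conventions in the form-to-ideal-class dictionary so that $A\mapsto-A$ is exactly the sign twist above, fixing the archimedean $\Gamma$-factors — hence the precise power of $\pi$ — for each parity of $k$, and making the reduction from non-fundamental to fundamental discriminant precise. A more geometric alternative would be to unfold the geodesic cycle integral $\int_{\Gamma(1)_{A}\backslash S_{A}}E_{2k}(z)A(z,1)^{k-1}\,dz$ of the weight-$2k$ Eisenstein series, which yields a constant times $D^{\frac{1-k}{2}}(\zeta_{\Gamma(1),A}(k)+(-1)^{k}\zeta_{\Gamma(1),-A}(k))$, and to invoke the Eichler--Shimura description of the Eisenstein cocycle (rational times a power of $\pi$ modulo coboundaries); there the $\zeta(2k-1)$-term in the period polynomial of $E_{2k}$ would require additional care.
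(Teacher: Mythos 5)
Your proposal follows essentially the same route as the paper's proof: reduce to level one via Lemma~\ref{lemma zeta level lowering} (with exactly the observations you make about $A_{N/d}$, $(-A)_{N/d}=-(A_{N/d})$ and the rational prefactors), then use the functional equation for $\zeta(2s)\bigl(\zeta_{\Gamma(1),A}(s)+(-1)^{k}\zeta_{\Gamma(1),-A}(s)\bigr)$ relating $s=k$ to $s=1-k$, the rationality of the value at $s=1-k$, and division by $\zeta(2k)\in\pi^{2k}\Q^{\times}$. The only difference is one of citation granularity: the paper quotes the functional equation and the rationality of $\zeta(2(1-k))\zeta_{\Gamma(1),A}(1-k)$ directly from p.~230 and Theorem~8 of \cite{kohnenzagierrationalperiods}, which are stated for arbitrary non-square discriminants, so the reduction to fundamental $D$ (or to orders) and the Hecke/Siegel bookkeeping in your sketch are not needed.
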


\begin{proof}
	We set $\widehat{\zeta}_{A}(s) := \zeta(2s)\zeta_{\Gamma(1),A}(s)$. It is well known that for $k \geq 2$ and any non-square discriminant $D > 0$ we have the functional equation
\[
D^{k-\frac{1}{2}}\left(\widehat{\zeta}_{A}(k)+(-1)^{k}\widehat{\zeta}_{-A}(k)\right) = \frac{2^{2k-1}\pi^{2k}}{(k-1)!^{2}}\widehat{\zeta}_{A}(1-k),
\]
compare \cite{kohnenzagierrationalperiods}, p. 230. Furthermore, $\widehat{\zeta}_{A}(1-k)$ is rational by Theorem 8 in \cite{kohnenzagierrationalperiods}. Dividing by $\zeta(2k) = (-1)^{k+1}\frac{B_{2k}(2\pi)^{2k}}{2(2k)!}$ on both sides, we see that
\[
D^{k-\frac{1}{2}}\left(\zeta_{\Gamma(1),A}(k)+(-1)^{k}\zeta_{\Gamma(1),-A}(k)\right)
\]
is rational, too. Using Lemma~\ref{lemma zeta level lowering} we obtain the result for all $N \in \N$.
\end{proof}

\begin{remark}
%The denominator of the Bernoulli number $B_{2n}$ equals $\prod_{p-1|2n\atop \text{$p$ prime}}p$. 
It follows from the explicit formula in Theorem 8 of \cite{kohnenzagierrationalperiods} that the denominator of $\widehat{\zeta}_{A}(1-k)$ is bounded by a constant only depending on $k$, but not on $A$. This formula can also be used to evaluate $D^{k-\frac{1}{2}}\left(\zeta_{\Gamma,A}(k)+(-1)^{k}\zeta_{\Gamma,-A}(k) \right)$ explicitly as a rational number.
%$12^{k} k! \prod_{n=1}^k \prod_{p \geq 5 \text{ prime} \atop p-1|n }p$. 
%Hence the denominator of $D^{k-\frac{1}{2}}\left(\zeta_{N,A}(k)+(-1)^{k}\zeta_{N,-A}(k) \right)$ is bounded by
%$$
%\frac{N_{2k} 2^{k/2} (k-1)!^2 k!}{(2k)!}\prod_{n=1}^k \lcm\left(\prod_{p-1|2n\atop \text{$p>k-1$ prime}}p^2\right)
%$$ 
%where $N_{2k}$ is the numerator of $B_{2k}$. 
\end{remark}

Finally, we relate the expression from Proposition~\ref{proposition zeta level lowering} to the cycle integrals of the Eisenstein series $E_{2k,\Gamma}$ of weight $2k$ for the cusp $i\infty$ of $\Gamma$, which is normalized such that its Fourier expansion at $i\infty$ has constant term $1$. The following result can be proven by a similar computation as on pp. 240--241 of \cite{kohnenzagierrationalperiods}.

\begin{proposition}
	Let $k\geq 2$ and let $A \in \mathcal{Q}_{D}$ be an indefinite quadratic form of non-square discriminant $D > 0$. Then
	\[
	\mathcal{C}(E_{2k,\Gamma},A) = (-1)^{k}\frac{(k-1)!^{2}}{(2k-1)!}D^{k-\frac{1}{2}}\left(\zeta_{\Gamma,A}(k)+(-1)^{k}\zeta_{\Gamma,-A}(k) \right).
	\]
\end{proposition}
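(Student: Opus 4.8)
The plan is to compute the cycle integral $\mathcal{C}(E_{2k,\Gamma},A) = \int_{\Gamma_A\backslash S_A} E_{2k,\Gamma}(z)A(z,1)^{k-1}\,dz$ directly by unfolding. Since $E_{2k,\Gamma}(z) = \sum_{\left(\begin{smallmatrix}*&*\\c&d\end{smallmatrix}\right)\in\Gamma_\infty\backslash\Gamma}(cz+d)^{-2k}$, and $A(z,1)^{k-1}\,dz$ transforms with weight $2-2k$, the integrand $E_{2k,\Gamma}(z)A(z,1)^{k-1}\,dz$ is a $\Gamma$-invariant $1$-form. The standard Kohnen--Zagier unfolding argument (cf.\ \cite{kohnenzagierrationalperiods}, pp.~240--241) replaces the sum over $\Gamma_\infty\backslash\Gamma$ and the integral over $\Gamma_A\backslash S_A$ by a single integral over $\Gamma_\infty\backslash\Gamma/\Gamma_A$ worth of pieces, and using that $A\circ M$ for $M\in\Gamma$ ranges over forms $A(m,n)$ as the bottom row varies, one rewrites the whole expression as a sum over coprime pairs $(m,n)$ with $N\mid n$ modulo $\Gamma_A$ of integrals of the shape $A(m,n)^{k-1}\int (\text{rational function})\,dz$ along the semicircle $S_A$ (or a transported segment).

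Concretely, first I would parametrize $S_A$ rationally. Writing $A=[a,b,c]$ with $D=b^2-4ac>0$, the geodesic $S_A$ is the set of $z$ with $A(z,1)$ purely imaginary (up to sign), and one can use the fact that along $S_A$ one has $A(z,1) = \sqrt{D}\,\Im(z)\cdot(\text{something})$; more usefully, $A(z,1)\,\overline{A(z,1)}$ and $\Im(z)$ are linked so that $A(z,1)^{k-1}dz$ becomes an elementary differential. The key computational identity is that for the trivial coset term $M = \mathrm{id}$ (where the Eisenstein summand is $1$), the integral $\int_{\gamma_A} A(z,1)^{k-1}\,dz$ over a fundamental segment $\gamma_A$ of the geodesic, with endpoints the two real roots $w,w'$ of $A$, evaluates (after the substitution $z = w + (w'-w)t$ or a Cayley-type map sending $S_A$ to a vertical line) to an explicit constant times $D^{k-1/2}$ times a Beta-function factor $\frac{(k-1)!^2}{(2k-1)!}$. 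Summing the contributions of all cosets reassembles, by the Dirichlet-series form of $\zeta_{\Gamma,A}(s)$ recorded in Section~\ref{section zeta functions}, into $\zeta_{\Gamma,A}(k)$; the form $-A$ appears because the two orientations of traversing the geodesic (or the two signs of $A(m,n)$) contribute $\zeta_{\Gamma,A}(k)$ and $(-1)^k\zeta_{\Gamma,-A}(k)$ respectively, exactly matching the combination in Proposition~\ref{proposition zeta level lowering}.

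More precisely, the order of steps would be: (1) verify the integrand is a $\Gamma$-invariant $1$-form so the cycle integral is well defined; (2) unfold, writing $\mathcal{C}(E_{2k,\Gamma},A) = \sum_{(m,n)}A(m,n)^{k-1}\int_{\gamma}(cz+d)^{-2k}\,dz$ over an appropriate set of cosets, carefully tracking the fundamental domain $\Gamma_A\backslash S_A$ versus the full geodesic; (3) in each term, apply the Möbius transformation with bottom row $(c,d)=(n/\!\gcd,\dots)$ to straighten the arc and reduce to the basic integral; (4) evaluate the basic integral via a Beta integral, producing the factor $(-1)^k\frac{(k-1)!^2}{(2k-1)!}D^{k-1/2}$ and the summand $A(m,n)^{-k}$; (5) recognize the resulting double sum as $D^{k-1/2}\frac{(k-1)!^2}{(2k-1)!}(-1)^k\big(\zeta_{\Gamma,A}(k)+(-1)^k\zeta_{\Gamma,-A}(k)\big)$ using \eqref{zetadef}.

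The main obstacle I anticipate is the bookkeeping in steps (2)--(3): making the unfolding rigorous requires care with the quotient $\Gamma_A\backslash S_A$ (which is a \emph{compact} cycle, unlike the cuspidal situation), with the stabilizer $\Gamma_\infty$ of the Eisenstein series interacting with $\Gamma_A$, and with convergence --- since $E_{2k,\Gamma}$ for $k\ge 2$ converges absolutely this is not serious, but one must still justify interchanging sum and integral and correctly identify which coset representatives $(c:d)$ with $N\mid c$ (equivalently pairs $(m,n)$ with $N\mid n$, $\gcd(m,n)=1$, $A(m,n)>0$) occur and with what multiplicity modulo $\Gamma_A^t$. The sign/orientation analysis producing the two zeta functions $\zeta_{\Gamma,A}$ and $\zeta_{\Gamma,-A}$ with the factor $(-1)^k$ is the other delicate point, but it is forced by comparison with the known level-one case in \cite{kohnenzagierrationalperiods} and by the parity of the Legendre-type integral under $z\mapsto \bar z$ reflection of the cycle.
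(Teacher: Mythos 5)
Your proposal is correct and follows essentially the same route as the paper, which gives no independent argument but simply notes that the result follows from the unfolding computation on pp.~240--241 of \cite{kohnenzagierrationalperiods} --- precisely the computation you outline: unfold $E_{2k,\Gamma}$ against the closed geodesic, evaluate each term $\int_{S_A}(cz+d)^{-2k}A(z,1)^{k-1}\,dz$ as a Beta integral yielding $(-1)^k\frac{(k-1)!^2}{(2k-1)!}D^{k-\frac12}A(d,-c)^{-k}$, and reassemble the sum over pairs modulo $\Gamma_A$ into $\zeta_{\Gamma,A}(k)+(-1)^k\zeta_{\Gamma,-A}(k)$ via \eqref{zetadef}. (Only the aside about the identity coset contributing over a fundamental segment is slightly off --- after unfolding every $\Gamma_A$-orbit of cosets is integrated over the full semicircle $S_A$ --- but your precise step list (2)--(5) has it right.)
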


Although we will not use this formula in the proofs of our main results, we decided to include it since it gives an interesting interpretation of the expression from Proposition~\ref{proposition zeta level lowering}.

\subsection{Cycle integrals of meromorphic modular forms}\label{section cycle integrals}

Let $A = [a,b,c] \in \mathcal{Q}_{D}$ be an indefinite quadratic form of non-square discriminant $D > 0$. Then the set
\[
S_{A} :=\{z  \in \H: a|z|^{2}+b\Re(z)+c = 0\}
\] 
is a semi-circle centered at the real line. Let $f: \H \to \C$ be a meromorphic function which transforms like a modular form of weight $2k$ for $\Gamma$. If the poles of $f$ do not meet the semi-circle $S_{A}$, then we define the cycle integral of $f$ along the closed geodesic $c_{A} = \Gamma_{A}\backslash S_{A}$ by
\[
\mathcal{C}(f,A) := \int_{c_{A}}f(z)A(z,1)^{k-1}dz, 
\]
where $\Gamma_{A}$ denotes the stabilizer of $A$ in $\Gamma$. It only depends on the $\Gamma$-equivalence class of $A$. If some poles of $f$ do lie on $S_{A}$, we modify $S_{A}$ by circumventing these poles and all of their $\Gamma$-translates on small arcs of radius $\varepsilon > 0$ above and below the poles. Thereby we obtain two paths $S_{A,\varepsilon}^{+}$ and $S_{A,\varepsilon}^{-}$ and corresponding geodesics $c_{A,\varepsilon}^{+}$ and $c_{A,\varepsilon}^{-}$ which avoid the poles of $f$. We define the regularized cycle integral of $f$ along $c_{A}$ by the Cauchy principal value
\begin{align}\label{eq Cauchy principal value}
\mathcal{C}(f,A) := \lim_{\varepsilon \to 0}\frac{1}{2}\left(\int_{c_{A,\varepsilon}^{+}}f(z)A(z,1)^{k-1}dz+\int_{c_{A,\varepsilon}^{-}}f(z)A(z,1)^{k-1}dz\right).
\end{align}
Note that, since $f$ is meromorphic, the integrals on the right-hand side are actually independent of $\varepsilon$ for $\varepsilon > 0$ small enough, so the limit exists. 

\subsection{Maass Poincar\'e series}

Throughout this section we let $N$ be odd and square-free.
%\footnote{We make this assumption since we work with scalar valued modular forms of half-integral weight for $\Gamma_{0}(4N)$ for simplicity. One could achieve greater generality by working with vector valued modular forms for the Weil representation.} 
One can construct harmonic Maass form of half-integral weight as special values of Maass Poincar\'e series, see \cite{millerpixton}, for example. In this way, one obtains for every integer $k \geq 1$ and $n < 0$ with $(-1)^{k}n \equiv 0,3 \pmod 4$ a harmonic Maass form $\mathcal{P}_{\frac{3}{2}-k,n}(\tau)$ of weight $\frac{3}{2}-k$ for $\Gamma_{0}(4N)$ which satisfies the Kohnen plus space condition, whose Fourier expansion at $i\infty$ starts with $q^{-|n|}+O(1)$, and which is bounded at the other cusps.

The holomorphic part of $\mathcal{P}_{\frac{3}{2}-k,n}$ has a Fourier expansion of the shape
\[
\mathcal{P}_{\frac{3}{2}-k,n}^+ (\tau) = q^{-|n|} + \sum_{\substack{m \geq 0 \\ (-1)^{k}m \equiv 0,3 \!\!\!\!\pmod{4}}} 
c^+ _{\mathcal{P}_{\frac{3}{2}-k,n}}(m)q^m,
\]
whose coefficients of positive index are given as follows.

\begin{theorem}[Theorem 2.1 in \cite{millerpixton}]\label{theorem Poincare Fourier expansion}
\item Let $n<0$ and $m>0$ with $(-1)^{k}n,(-1)^{k}m\equiv 0,3\pmod{4}$. Then 
\begin{align*}
c_{\mathcal{P}_{\frac{3}{2}-k,n}}^{+}(m) &= 
-(-1)^{\left\lfloor\frac{k}{2}\right\rfloor}\pi\sqrt{2} \left(\frac{m}{|n|}\right)^{\frac14 -\frac{k}{2}}\sum_{\substack{a >0 \\ N \mid a}}\frac{K^{+}((-1)^{k+1}n, (-1)^{k+1}m,a)}{a}I_{k-\frac12}\left(\frac{\pi\sqrt{m|n|}}{a}\right),
\end{align*}
with the half-integral weight Kloosterman sum
\[
K^{+}(m,n,a) := \frac{1-i}{4}\left(1+\left(\frac{4}{a}\right)\right)\sum_{\nu\!\!\!\!\pmod{4a}^*}\left(\frac{4a}{\nu}\right)\left(\frac{-4}{\nu}\right)^{\frac12}e\left(\frac{m
\nu+n\overline{\nu}}{4a}\right).
\]
\end{theorem}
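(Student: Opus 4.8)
The plan is to construct $\mathcal{P}_{\frac32-k,n}$ directly as a Maass--Poincar\'e series and to read off its Fourier expansion by the standard unfolding argument, following Miller--Pixton \cite{millerpixton}; I indicate the main steps. Write $\kappa = \frac32-k$ and let $|_\kappa M$ denote the weight-$\kappa$ slash operator for $\Gamma_0(4N)$ with the theta multiplier $\nu_\theta\!\left(\begin{smallmatrix}a & b \\ c & d\end{smallmatrix}\right) = \left(\tfrac{c}{d}\right)\varepsilon_d^{-2\kappa}$. For $\Re(s)$ large, put $\varphi_{n,s}(\tau) := \mathcal{M}_{s}(4\pi|n|v)\,e(-|n|u)$, where $\mathcal{M}_s$ is the usual Whittaker seed (essentially $w\mapsto w^{-\kappa/2}M_{-\kappa/2,\,s-1/2}(w)$) normalized so that $\varphi_{n,s}$ is an eigenfunction of the weight-$\kappa$ Laplacian $\Delta_\kappa$ with eigenvalue $(s-\tfrac\kappa2)(1-s-\tfrac\kappa2)$ decaying as $v\to\infty$, and which at the special value $s_0 := 1-\tfrac\kappa2 = \tfrac k2+\tfrac14$ degenerates to $q^{-|n|}$ (up to a nonzero constant) plus a non-holomorphic incomplete-gamma term. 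Set $\mathcal{P}_{\kappa,n}(\tau,s) := \sum_{M\in\Gamma_\infty\backslash\Gamma_0(4N)}(\varphi_{n,s}|_\kappa M)(\tau)$, which converges absolutely for $\Re(s)>1$; invoke the standard meromorphic continuation to $s_0$ (which is holomorphic there when $\kappa\le 0$, i.e.\ $k\ge 2$; the case $k=1$ is handled separately), where the $\Delta_\kappa$-eigenvalue $0$ forces harmonicity; and finally apply Kohnen's plus-space projector $\mathrm{pr}$, setting $\mathcal{P}_{\frac32-k,n} := \mathrm{pr}\,\mathcal{P}_{\kappa,n}(\,\cdot\,,s_0)$. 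Since the seed is supported only at the cusp $i\infty$ of $\Gamma_0(4N)$ and has vanishing constant term there, the resulting form is bounded at every cusp with principal part exactly $q^{-|n|}$ at $i\infty$, which pins down the normalization in the statement.

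For the Fourier expansion I would unfold over $\Gamma_\infty\backslash\Gamma_0(4N)/\Gamma_\infty$. The identity double coset contributes $q^{-|n|}$. For the remaining cosets, indexed by the lower-left entry $c$ (which after the plus-space projection becomes $c=a$ with $N\mid a$), the contribution to the coefficient of index $m>0$ factors as an exponential sum over $d\bmod c$ coming from $\nu_\theta^{-1}\,e\!\left(\tfrac{-|n|\bar d+md}{c}\right)$ times a $u$-integral $\int_{\R}\mathcal{M}_{s_0}\!\left(\tfrac{4\pi|n|v}{c^2(u^2+v^2)}\right)e\!\left(\tfrac{|n|u}{c^2(u^2+v^2)}\right)e(-mu)\,du$. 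Tracking the quadratic residue symbols in $\nu_\theta$ through $\mathrm{pr}$ turns the $d$-sum into the plus-space Kloosterman sum $K^+\bigl((-1)^{k+1}n,(-1)^{k+1}m,a\bigr)$, while the $u$-integral is the classical Whittaker--Kloosterman integral, which at $s=s_0$ evaluates, for $m>0$, to an explicit constant times $(m/|n|)^{\frac14-\frac k2}\,I_{k-\frac12}\!\left(\tfrac{\pi\sqrt{m|n|}}{a}\right)\!/a$. Summing over $a$ with $N\mid a$ and collecting the scalar factors (the normalization of $\mathcal{M}_{s_0}$, the Gauss sums introduced by $\mathrm{pr}$, and the sign $(-1)^{\lfloor k/2\rfloor}$) yields exactly the claimed formula for $c^+_{\mathcal{P}_{\frac32-k,n}}(m)$.

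The main obstacle is the bookkeeping of the half-integral weight multiplier across the plus-space projection: one must check that $\nu_\theta$, the normalization of $\mathcal{M}_{s_0}$, and $\mathrm{pr}$ combine so that the $d$-sum produces precisely the symbols $\left(\tfrac{4a}{\nu}\right)\left(\tfrac{-4}{\nu}\right)^{1/2}$ and the prefactor $\tfrac{1-i}{4}\!\left(1+\left(\tfrac{4}{a}\right)\right)$ of $K^+$, and that the only surviving $k$-dependent sign is $(-1)^{\lfloor k/2\rfloor}$ --- in particular that the $4\nmid a$ and $4\mid a$ contributions interlock with the definition of $K^+$ uniformly in $k\bmod 8$. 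By contrast, the convergence, the continuation to $s_0$, the harmonicity at $s_0$, and the evaluation of the $u$-integral are routine once the spectral point $s_0=\tfrac k2+\tfrac14$ and the behaviour at the cusps other than $i\infty$ have been fixed.
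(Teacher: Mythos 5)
The paper contains no proof of this statement at all: it is quoted verbatim as Theorem 2.1 of Miller--Pixton \cite{millerpixton}, so there is no internal argument to compare yours with. Your outline is the standard route by which such formulas are established, and essentially the one used in the cited source: build the weight $\tfrac32-k$ Maass--Poincar\'e series from a Whittaker seed at the spectral point $s_0=\tfrac k2+\tfrac14$, pass to the Kohnen plus space, and unfold to obtain Kloosterman-type sums against a Bessel integral. Two small corrections to your analytic setup: for $k\ge 2$ one has $s_0=\tfrac k2+\tfrac14>1$, so the series already converges absolutely at the harmonic point and no meromorphic continuation is needed (continuation is genuinely an issue only for $k=1$); and the restriction to $N\mid a$ together with the factor $\tfrac{1-i}{4}\bigl(1+\bigl(\tfrac 4a\bigr)\bigr)$ is produced by Kohnen's projection operator acting on the expansion indexed by lower-left entries $c\equiv 0\pmod{4N}$ (the modulus in $K^{+}$ is $4a$, so in effect $c=4a$), not by the cosets being re-indexed by $c=a$ as you wrote.

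More importantly, the step you yourself flag as ``the main obstacle'' is exactly where the content of the stated formula lies: the verification that the theta multiplier, the normalization of the Whittaker seed, and the projection operator combine to produce precisely the sum $K^{+}((-1)^{k+1}n,(-1)^{k+1}m,a)$ with its symbols $\bigl(\tfrac{4a}{\nu}\bigr)\bigl(\tfrac{-4}{\nu}\bigr)^{1/2}$ and the single surviving sign $(-1)^{\lfloor k/2\rfloor}$, uniformly in $k$. As written, your argument asserts rather than performs this Gauss-sum and multiplier bookkeeping, so it is a correct and well-structured outline of the standard proof but not yet a derivation of the specific constants in the statement; to complete it one must carry out that computation (as in Kohnen's work and in \cite{millerpixton}) or, as the paper does, simply cite the result.
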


Let $\Delta,\delta \in \Z$ be discriminants and assume that $\delta$ is fundamental. For $a,n \in \Z$ we consider the \emph{Sali\'e sum} 
\[
S_{a,\Delta, \delta}(n):=\sum_{\substack{b \!\!\!\!\pmod{2a} \\ b^2 \equiv \delta\Delta \!\!\!\!\pmod{4a}}}\chi_\delta\left(\left[a,b,\frac{b^2-\delta\Delta}{4a}\right]\right)e\left(\frac{nb}{2a}\right),
\]
where $\chi_{\delta}$ is the generalized genus character of $\mathcal{Q}_{\Delta\delta}$ as defined in \cite{grosskohnenzagier}.
It is related to the half-integral weight Kloosterman sum by the following formula.
\begin{proposition}[Proposition 3 in \cite{dit}]\label{prop salie sum}
Let $\Delta,\delta \in \Z$ be discriminants and assume that $\delta$ is fundamental. Then for $a,n \in \Z$ we have the identity
\begin{align*}
S_{a,\Delta, \delta}(n)&= \sum_{m|(n,a)}\left(\frac{\delta}{m}\right)\sqrt{\frac{m}{a}}
 K^{+}\left(\Delta,\frac{n^2}{m^2}\delta,\frac{a}{m}\right).
\end{align*}
\end{proposition}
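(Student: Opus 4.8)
Both $S_{a,\Delta,\delta}(n)$ and the half-integral weight Kloosterman sum $K^{+}$ are Sali\'e-type exponential sums, and the asserted identity is of the classical kind that is proved by evaluating such sums explicitly and matching the results (this goes back to Sali\'e, Shimura and Kohnen; compare the computations in \cite{grosskohnenzagier} and in \cite{dit}). The plan is to reduce the statement to an identity for prime-power modulus $a = p^{\alpha}$ and then check it by explicit evaluation. So first I would establish the \emph{twisted multiplicativity} of both sides in the variable $a$: on the left this follows by applying the Chinese Remainder Theorem to the congruence $b^{2} \equiv \delta\Delta \pmod{4a}$ and to the induced factorisation of the genus character $\chi_{\delta}$, and on the right it follows from the standard twisted multiplicativity of $K^{+}$ together with the fact that $m \mapsto \left(\frac{\delta}{m}\right)\sqrt{m/a}$ is multiplicative and that the divisor condition $m \mid (n,a)$ is compatible with the factorisation $a = \prod_{p} p^{\alpha}$. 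After this reduction it is enough to prove the identity when $a = p^{\alpha}$.

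\textbf{The key steps.} For a fixed prime power $a = p^{\alpha}$ I would first bring in the Sali\'e evaluation of the half-integral weight Kloosterman sum, which writes $K^{+}(u,v;c)$ as a root of unity times $\sqrt{c}$ times an exponential sum supported on the square roots of $uv$ modulo (a divisor of) $4c$; on the left-hand side I would correspondingly use the description of the genus character in terms of a primitively represented value, $\chi_{\delta}([a,b,c]) = \left(\frac{\delta}{r}\right)$ for any $r$ represented by the form with $(r,\delta) = 1$, together with the Gauss sum identity $\sum_{\ell \bmod |\delta|} \left(\frac{\delta}{\ell}\right) e\left(\frac{\ell r}{|\delta|}\right) = \varepsilon_{\delta}\sqrt{|\delta|}\left(\frac{\delta}{r}\right)$ for the real primitive character of conductor $|\delta|$. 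Substituting this into the definition of $S_{p^{\alpha},\Delta,\delta}(n)$, interchanging the two summations and completing the square in $b$ turns the inner sum over the square roots of $\delta\Delta$ modulo $4p^{\alpha}$ into an ordinary quadratic Gauss sum, which is evaluated classically. Matching the two expressions, with $u = \Delta$ and $v = \frac{n^{2}}{m^{2}}\delta$, then produces the claimed formula; the sum over $m \mid (n, p^{\alpha})$ arises because, when $p^{\beta}$ exactly divides $(n, p^{\alpha})$, the relevant Gauss sum degenerates and its lower-order pieces reproduce Kloosterman sums of the smaller modulus $p^{\alpha - \beta}$ weighted by $\left(\frac{\delta}{p^{\beta}}\right) p^{-\beta/2}$. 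Equivalently, one could first treat the coprime case $(n,a) = 1$, in which only the term $m = 1$ survives, and then bootstrap to arbitrary $n$.

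\textbf{The main obstacle.} The hard part will be the bookkeeping at the bad primes. At $p = 2$ the modulus $a$ can be divisible by a power of $2$ even though $N$ is odd, and one must match the plus-space normalisation $\frac{1-i}{4}\bigl(1 + \left(\frac{4}{a}\right)\bigr)$ in the definition of $K^{+}$ against the $2$-adic structure of the congruence $b^{2} \equiv \delta\Delta \pmod{4a}$. At the ramified primes $p \mid \delta$ the local component of $\chi_{\delta}$ is a genuine quadratic Gauss sum rather than a Legendre symbol, and it has to be matched against the Kronecker symbol factors $\left(\frac{4a}{\nu}\right)\left(\frac{-4}{\nu}\right)^{\frac12}$ occurring in $K^{+}$. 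In both cases the identity reduces to a comparison of local factors, and the delicate point is keeping track of the various eighth-root-of-unity constants (the $\frac{1-i}{4}$, the normalised quadratic Gauss sums $\varepsilon_{p^{\alpha}}$, and the sign $\varepsilon_{\delta}$, which depends on whether $\delta$ is positive or negative). These local computations are elementary but somewhat intricate; alternatively, one may simply invoke the computation already carried out in \cite{dit}.
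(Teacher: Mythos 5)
The paper offers no proof of this statement at all: it is imported verbatim as Proposition 3 of \cite{dit}, so there is no argument in the text to compare yours against step by step. Your outline is the standard direct route to such identities and is essentially the one followed in the cited literature: reduce by twisted multiplicativity (Chinese Remainder Theorem on $b^{2}\equiv\delta\Delta\pmod{4a}$ together with the factorisation properties of $\chi_{\delta}$ from \cite{grosskohnenzagier}) to prime-power modulus, then match the explicit Sali\'e-type evaluation of $K^{+}$ (Kohnen's evaluation of plus-space Kloosterman sums) against the genus-character/Gauss-sum computation of $S_{a,\Delta,\delta}(n)$, the divisor sum over $m\mid(n,a)$ emerging from the degenerate Gauss sums when $p\mid(n,a)$. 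That strategy is sound, and your identification of where the real work lies (the $2$-adic bookkeeping forced by the modulus $4a$ and the plus-space factor $\frac{1-i}{4}\bigl(1+\bigl(\frac{4}{a}\bigr)\bigr)$, and the ramified primes $p\mid\delta$ where $\chi_{\delta}$ contributes a genuine Gauss sum) is accurate. The only caveat is that, as written, this is a programme rather than a proof: every local computation carrying the actual content is described but not carried out, so nothing is verified beyond what the multiplicative reduction formalises; your fallback of simply invoking the computation in \cite{dit} is precisely what the authors do, and for the purposes of this paper that is the intended reading. If you do want an independent verification, the piece to execute carefully is the prime-power identity at $p=2$ and at $p\mid\delta$, since the odd unramified primes follow quickly from the classical Sali\'e evaluation.
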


\section{Locally harmonic Maass forms}\label{locally}

The key to proving Theorems~\ref{theorem traces rationality} and \ref{theorem traces rationality 2} is relating the cycle integrals of $f_{k,P}$ to certain {\it locally harmonic Maass forms} introduced by Bringman, Kane, and Kohnen in \cite{bringmannkanekohnen} and H\"ovel in \cite{hoevel}. Namely, for $k\geq 2$, $\tau = u+iv \in \H$, and an indefinite quadratic form $A \in \mathcal{Q}_{D}$ of non-square discriminant $D > 0$, these are defined by the series
\begin{equation}\label{curlyfexp}
\mathcal{F}_{1-k, A}(\tau) : = \frac{(-1)^{k}D^{\frac{1}{2}-k}}{\binom{2k-2}{k-1}\pi}\sum_{Q \in [A]}\sgn(Q_{\tau})Q(\tau,1)^{k-1}\psi\left( \frac{Dv^{2}}{|Q(\tau,1)|^{2}}\right),
\end{equation}
where $Q_{\tau} := \frac{1}{v}(a|\tau|^{2}+bu + c)$ and
\[
\psi(v) :=\frac{1}{2}\beta\left(v;k-\frac{1}{2},\frac{1}{2}\right) = \frac{1}{2}\int_{0}^{v}t^{k-\frac{3}{2}}(1-t)^{-\frac{1}{2}}dt
\]
is a special value of the incomplete $\beta$-function. For $k=1$ one can define a weight $0$ analogue $\mathcal{F}_{0,A}$ of \eqref{curlyfexp} using the Hecke trick as in \cite{brikavia, ehlenguerzhoykanerolen}. By adding a suitable constant we can normalize $\mathcal{F}_{0,A}$ such that it vanishes at $i\infty$.

The function $\mathcal{F}_{1-k,A}$ transforms like a modular form of weight $2-2k$ for $\Gamma$, is harmonic on $\H \setminus \bigcup_{Q\in[A]}S_{Q}$ and bounded at the cusps, and has discontinuities along the semi-circles $S_{Q}$ for $Q \in [A]$. Its value at a point $\tau$ lying on $S_{Q}$ for $Q \in [A]$ is given by the average value
\begin{align}\label{eq average value}
\mathcal{F}_{1-k,A}(\tau) = \lim_{\varepsilon \to 0}\frac{1}{2}\big(\mathcal{F}_{1-k,A}(\tau+i\varepsilon)+\mathcal{F}_{1-k,A}(\tau-i\varepsilon)\big).
\end{align}
 Furthermore, outside the singularities $\mathcal{F}_{1-k,A}$ is related to the cusp form $f_{k,A} \in S_{2k}(\Gamma)$ by the differential equations
\begin{align*}
\xi_{2-2k}(\mathcal{F}_{1-k,A}) &= (-1)^{k}\frac{D^{\frac{1}{2}-k}}{\binom{2k-2}{k-1}}f_{k,A}, \\
\mathcal{D}^{2k-1}(\mathcal{F}_{1-k,A}) &= (-1)^{k+1}D^{\frac{1}{2}-k}\frac{(k-1)!^{2}}{(4\pi)^{2k-1}}f_{k,A},
\end{align*}
where $\xi_{\kappa} := 2iv^{\kappa}\overline{\frac{\partial}{\partial \overline{\tau}}}$ and $\mathcal{D} := \frac{1}{2\pi i}\frac{\partial}{\partial \tau}$. Note that our normalization of $f_{k,A}$ differs from the one used in \cite{bringmannkanekohnen}, which explains the different constants in the above differential equations.

Recall that the non-holomorphic and holomorphic \emph{Eichler integrals} of a cusp form $f = \sum_{n\geq 1}c_{f}(n)q^{n} \in S_{2k}(\Gamma)$ are defined by
\begin{align*}
f^{*}(\tau) := (-2i)^{1-2k}\int_{-\overline{\tau}}^{i\infty}\overline{f(-\overline{z})}(z+\tau)^{2k-2}dz,\qquad\mathcal{E}_{f}(\tau) := \sum_{n\geq 1}\frac{c_{f}(n)}{n^{2k-1}}q^{n}.
\end{align*}
They satisfy
\[
\xi_{2-2k}(f^{*}) = f, \qquad \mathcal{D}^{2k-1}(f^{*}) = 0, \qquad \xi_{2-2k}(\mathcal{E}_{f}) = 0, \qquad \mathcal{D}^{2k-1}(\mathcal{E}_{f}) = f.
\]
The following decomposition of $\mathcal{F}_{1-k,A}$ was derived by Bringmann, Kane and Kohnen for $N = 1$ and $k\geq 2$ in \cite{bringmannkanekohnen}, but the same methods work for all $N \geq 1$ and $k = 1$ (see also \cite{ehlenguerzhoykanerolen} for $k = 1$).

\begin{theorem}[Theorem 7.1 of \cite{bringmannkanekohnen}]\label{local}
For $\tau$ not lying on any of the semi-circles $S_{Q}$ for $Q \in [A]$ we have
\[
\mathcal{F}_{1-k,A}(\tau) = P_{1-k,A}(\tau) + (-1)^{k}\frac{D^{\frac{1}{2}-k}}{\binom{2k-2}{k-1}}f^{*}_{k,A}(\tau)+(-1)^{k+1}D^{\frac{1}{2}-k}\frac{(k-1)!^{2}}{(4\pi)^{2k-1}}\mathcal{E}_{f_{k,A}}(\tau),
\]
where $P_{1-k,A}(\tau)$ is locally a polynomial of degree at most $2k-2$. More precisely, it is a polynomial on each connected component of $\H \setminus \bigcup_{Q \in [A]}S_{Q}$, which is given by
\[
P_{1-k,A}(\tau) := c_{k}(A) + (-1)^{k-1}2^{2-2k}D^{\frac{1}{2}-k}\sum_{\substack{Q = [a,b,c] \in [A] \\\tau \in \Int(S_{Q})}}\sgn(a)Q(\tau,1)^{k-1},
\]
where $c_{1}(A) :=0 $ and 
\[
c_{k}(A):= -\frac{\zeta_{\Gamma,A}(k)+(-1)^{k}\zeta_{\Gamma,-A}(k)}{2^{2k-2}(2k-1)\binom{2k-2}{k-1}}
\]
for $k \geq 2$, and $\Int(S_{Q})$ denotes the bounded component of $\H\setminus S_{Q}$.
\end{theorem}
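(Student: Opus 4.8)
The plan is to verify the asserted identity by showing that both sides transform the same way, are annihilated by the same differential operators up to the stated corrections, have the same singular behaviour, and agree asymptotically at the cusps. First I would establish that the function $P_{1-k,A}(\tau)$ is genuinely locally polynomial: on each connected component of $\H\setminus\bigcup_{Q\in[A]}S_Q$ the set $\{Q\in[A]:\tau\in\Int(S_Q)\}$ is locally constant, so the displayed finite sum is a polynomial of degree $k-1$ in $\tau$ (times a locally constant sign), hence of degree at most $2k-2$ overall on that component; one must check the sum is finite, which follows from the fact that for fixed $\tau$ only finitely many $Q\in[A]$ satisfy $\tau\in\Int(S_Q)$ (the semi-circles $S_Q$ leave every compact subset of $\H$). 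The constant $c_k(A)$ is designed precisely to make the pieces on adjacent components fit together, and to absorb the contribution of $\zeta_{\Gamma,A}$.

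The core of the argument is the differential-equations approach. Apply $\xi_{2-2k}$ to \eqref{curlyfexp}: since $\psi$ is an incomplete beta function and $Q(\tau,1)^{k-1}$ is holomorphic, a direct computation using $\xi_{2-2k}(g(\tau)\,\overline{h(\tau)}v^{2k-2}\cdots)$-type formulas reduces $\xi_{2-2k}(\mathcal{F}_{1-k,A})$ to the claimed multiple of $f_{k,A}$ away from the semi-circles (the polynomial $P_{1-k,A}$ is killed by $\xi_{2-2k}$ since it is locally holomorphic; likewise $\mathcal{E}_{f_{k,A}}$, the holomorphic Eichler integral, is in the kernel of $\xi_{2-2k}$, while $\xi_{2-2k}(f_{k,A}^*)=f_{k,A}$). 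Similarly, applying $\mathcal{D}^{2k-1}$ kills $P_{1-k,A}$ (degree $\le 2k-2$) and $f_{k,A}^*$, and sends $\mathcal{E}_{f_{k,A}}$ to $f_{k,A}$, so the second differential equation pins down the $\mathcal{E}_{f_{k,A}}$-coefficient. Thus both sides of the asserted identity have the same image under $\xi_{2-2k}$ and under $\mathcal{D}^{2k-1}$. A harmonic weight-$(2-2k)$ form which is a polynomial of degree $\le 2k-2$ is determined by these two images up to such a polynomial, so on each component $\mathcal{F}_{1-k,A}-\big(\text{RHS without }P_{1-k,A}\big)$ is a polynomial of degree $\le 2k-2$; the content of the theorem is the explicit shape of this polynomial. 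To get the explicit form one crosses a single semi-circle $S_{Q_0}$: the jump in $\mathcal{F}_{1-k,A}$ across $S_{Q_0}$ comes only from the single term $Q=Q_0$ in \eqref{curlyfexp}, where $\sgn(Q_\tau)$ flips sign and $\psi$ is evaluated at the boundary value $\psi(1)=\frac12 B(k-\tfrac12,\tfrac12)=\frac12\binom{2k-2}{k-1}^{-1}\cdot(\text{const})$; matching this jump to the jump of the sum $\sum_{\tau\in\Int(S_Q)}\sgn(a)Q(\tau,1)^{k-1}$ forces the normalizing constant in $P_{1-k,A}$. Finally, evaluating at $i\infty$: $\mathcal{F}_{1-k,A}$ is bounded there, $f_{k,A}^*$ and $\mathcal{E}_{f_{k,A}}$ both vanish at $i\infty$ (cusp form), and for $\tau$ high in the cusp no $S_Q$ with $Q\in[A]$ contains $\tau$ in its interior, so $P_{1-k,A}(\tau)=c_k(A)$ there; comparing constant terms, together with the relation between the constant term of $\mathcal{F}_{1-k,A}$ and $\zeta_{\Gamma,A}$, $\zeta_{\Gamma,-A}$ (obtained by unfolding the defining series against the trivial coset, as in the Eisenstein-series computation of Section~\ref{section zeta functions}), yields the formula for $c_k(A)$.

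The main obstacle is the explicit determination of the constant term $c_k(A)$, i.e.\ establishing the precise link between the value of $\mathcal{F}_{1-k,A}$ at $i\infty$ and the zeta values $\zeta_{\Gamma,A}(k)+(-1)^k\zeta_{\Gamma,-A}(k)$. This requires carefully unfolding the sum over $Q\in[A]$ in \eqref{curlyfexp} as $v\to\infty$: the forms $Q=[0,b,c]$ (for which $S_Q$ is a vertical line) and the asymptotics of $\psi\big(Dv^2/|Q(\tau,1)|^2\big)$ as $v\to\infty$ must be handled, and the sum over the remaining $Q$ with $a\neq0$ reorganized into the Dirichlet series $\zeta_{\Gamma,\pm A}(k)$ via the parametrization of $[A]$ by $(a,b)$ with $b^2\equiv D\pmod{4a}$. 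For $k=1$ this step additionally requires Hecke's trick to make the $\psi$-sum converge, which is why $c_1(A)=0$ (the would-be zeta value is replaced by an Eisenstein-type constant that gets normalized away). Everything else is a routine, if lengthy, computation of the kind carried out in \cite{bringmannkanekohnen}; only the level-$N$ and $k=1$ cases need the minor extra input from Lemmas~\ref{E2N} and \ref{lemma zeta level lowering}.
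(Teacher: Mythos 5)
A preliminary remark on the comparison you asked for: this paper does not prove Theorem~\ref{local} at all. It is imported verbatim from Theorem 7.1 of \cite{bringmannkanekohnen}, with only the remark that the same methods extend to level $N\geq 1$ and to $k=1$ (citing \cite{ehlenguerzhoykanerolen} for the latter). So there is no internal proof to measure your argument against, and your proposal has to be judged on its own.

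On its own terms the outline is sound and is essentially the standard way such decompositions are established. Your skeleton is correct: (i) the difference of the two sides is annihilated by $\xi_{2-2k}$ and by $\mathcal{D}^{2k-1}$ away from the semicircles, hence is locally a holomorphic polynomial of degree at most $2k-2$; (ii) the discontinuity of \eqref{curlyfexp} across a wall $S_{Q_0}$ comes only from the (at most two, when $A\sim -A$) forms with that semicircle, where $\sgn(Q_\tau)$ flips from $\sgn(a)$ to $-\sgn(a)$ and $\psi$ takes the boundary value $\psi(1)=\tfrac12 B\bigl(k-\tfrac12,\tfrac12\bigr)=\tfrac{\pi}{2\cdot 4^{k-1}}\binom{2k-2}{k-1}$; multiplying by the prefactor of \eqref{curlyfexp} this reproduces exactly the jump $(-1)^{k-1}2^{2-2k}D^{\frac12-k}\sgn(a)Q(\tau,1)^{k-1}$ of the displayed $P_{1-k,A}$, so the difference is continuous, hence a single global polynomial, hence constant by boundedness at $i\infty$; (iii) the constant is then fixed by the $v\to\infty$ behaviour, where unfolding the $Q$-sum produces $\zeta_{\Gamma,\pm A}(k)$. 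However, be aware that the two steps you dispatch with ``a direct computation'' and ``careful unfolding'' are precisely the analytic content of the theorem: the differential equations for $\mathcal{F}_{1-k,A}$ with their exact constants, and above all the evaluation of the constant term at $i\infty$ as the stated combination of zeta values (this is what forces $c_k(A)$, and for $k=1$ the Hecke-trick regularization and the normalization ``$\mathcal{F}_{0,A}$ vanishes at $i\infty$'' replace it). As written, these are asserted rather than proved, so your text is a strategy outline rather than a complete proof. Minor slips that do not affect viability: $Q(\tau,1)^{k-1}$ has degree $2k-2$ in $\tau$, not $k-1$; forms $[0,b,c]$ cannot occur in $[A]$ because $D$ is assumed non-square, so that case is vacuous; and the continuity across $S_{Q_0}$ of the sum over the remaining forms (locally uniform convergence) should be said explicitly before isolating the jump.
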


The main goal of this section is to show that $\mathcal{F}_{1-k,A}(\tau)$ can be written as a cycle integral of Petersson's Poincar\'e series $H_{k,k-1}(z,\tau)$.

\begin{theorem}\label{theorem locally harmonic maass form identity}
%For $\tau$ not lying on any of the semi-circles $S_{Q}$ for $Q \in [A]$ we have
We have
\begin{align*}
\mathcal{F}_{1-k,A}(\tau) = \frac{D^{\frac12-k}}{2\pi}\mathcal{C}\left( H_{k,k-1}(\cdot,\tau), A\right).
\end{align*}
If $\tau$ lies on a semi-circle $S_{Q}$ for $Q \in [A]$ the left-hand side has to interpreted as the average value \eqref{eq average value}, and the cycle integral on the right-hand side is defined as the Cauchy principal value \eqref{eq Cauchy principal value}.
%where $\overline{[a,b,c]} = [a,-b,c]$.
\end{theorem}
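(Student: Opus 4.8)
The plan is to prove the identity by unfolding the cycle integral against the definition of $H_{k,k-1}(z,\tau)$ and recognizing the resulting sum as the series \eqref{curlyfexp} defining $\mathcal{F}_{1-k,A}$. First I would write out
\[
\mathcal{C}\big(H_{k,k-1}(\cdot,\tau),A\big) = \int_{\Gamma_A\backslash S_A} H_{k,k-1}(z,\tau)\, A(z,1)^{k-1}\,dz
\]
and insert the definition $H_{k,k-1}(z,\tau) = \sum_{M \in \Gamma} v^{2k-1}\big((z-\tau)^{-1}(z-\overline\tau)^{1-2k}\big)\big|_{2k,z}M$. Since $A(z,1)^{k-1}dz$ is $\Gamma_A$-invariant (this is the standard fact making $\mathcal{C}(f,A)$ well-defined, already used in Section~\ref{section cycle integrals}), the usual unfolding argument replaces the integral over $\Gamma_A\backslash S_A$ against $\sum_{M\in\Gamma}$ by an integral over $\Gamma_A\backslash(\Gamma_A\backslash\Gamma$-translates of $S_A)$, i.e.\ a sum over $Q \in [A]$ of integrals over full semicircles $S_Q$. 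Concretely, using the change of variables $z \mapsto Mz$ and the cocycle relation for $A(z,1)^{k-1}dz$ under $Q = A\circ M^{-1}$, one gets
\[
\mathcal{C}\big(H_{k,k-1}(\cdot,\tau),A\big) = \sum_{Q \in [A]} \int_{S_Q} \frac{\Im(\tau)^{2k-1}}{(z-\tau)(z-\overline\tau)^{2k-1}}\, Q(z,1)^{k-1}\,dz.
\]

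Next I would evaluate each integral $\int_{S_Q} (z-\tau)^{-1}(z-\overline\tau)^{1-2k} Q(z,1)^{k-1}\,dz$ by contour methods. The semicircle $S_Q$ is the locus $Q_z = 0$; one parametrizes it or, better, closes it into a closed contour in $\H$ (using that $(z-\overline\tau)^{1-2k}$ decays and the integrand is meromorphic in $z$ with its only pole at $z = \tau$) and applies the residue theorem. The pole at $z=\tau$ contributes precisely when $\tau$ lies in the bounded region cut off by $S_Q$, i.e.\ $\tau \in \Int(S_Q)$ (or its complement, depending on orientation/sign of $a$), which is exactly where the $\sgn(Q_\tau)$ and the ``$\tau \in \Int(S_Q)$'' conditions in \eqref{curlyfexp} and in Theorem~\ref{local} come from. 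The residue at $z=\tau$ is a constant times $Q(\tau,1)^{k-1}\Im(\tau)^{2k-1}/(\tau-\overline\tau)^{2k-1}$, which simplifies to a constant times $Q(\tau,1)^{k-1}$. The remaining ``boundary'' part of the contour integral — coming from the part of $S_Q$ far from $\tau$, or equivalently from a carefully chosen closing arc — should reproduce the incomplete beta-function factor $\psi\big(Dv^2/|Q(\tau,1)|^2\big)$: substituting $t = Dv^2/|Q(z,1)|^2$ along $S_Q$ turns $\int t^{k-3/2}(1-t)^{-1/2}dt$ into exactly this integral over $S_Q$. This is the computational heart of the argument and is essentially the same calculation that underlies Theorem~\ref{local} in \cite{bringmannkanekohnen}; I would either adapt it directly or cite it.

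Assembling the pieces, the sum over $Q \in [A]$ of (residue term $+$ beta-function term) reproduces $\frac{2\pi}{D^{1/2-k}}\mathcal{F}_{1-k,A}(\tau)$ after matching the normalizing constants $(-1)^k$, $\binom{2k-2}{k-1}$, and powers of $2$ — bookkeeping that one checks once and for all. For the case $\tau \in S_Q$ for some $Q \in [A]$: on the left side $\mathcal{F}_{1-k,A}$ is defined by the average \eqref{eq average value}, and on the right the cycle integral is the principal value \eqref{eq Cauchy principal value} obtained by pushing the contour $\varepsilon$ above and below the pole; averaging the two deformations picks up half the residue, which matches the averaging convention on the left, so the identity extends by continuity. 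For $k=1$ the same scheme works but one must run it through Hecke's trick: define everything with the extra $|z-\tau|^{-s}|z-\overline\tau|^{-s}(vy)^{s}$ regulator, do the unfolding and contour computation for $\Re(s)>0$, and analytically continue to $s=0$, using \eqref{H1E2} and Lemma~\ref{E2N} to handle the Eisenstein correction term built into $H_1(z,\tau)$ and into $f_{1,P}$.

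The main obstacle I anticipate is the contour/residue computation of the individual integral over $S_Q$ and, more delicately, justifying the interchange of summation over $M \in \Gamma$ with the integral over $S_A$: the series for $H_{k,k-1}(\cdot,\tau)$ converges absolutely for $k \geq 2$ but the convergence near the cusps and near $z$ in the $\Gamma$-orbit of $\tau$ needs care, and for $k=1$ it only converges after the Hecke regularization, so the unfolding there must be done at $\Re(s)>0$ and the analytic continuation argued separately. A secondary obstacle is getting the signs and the region-of-validity conditions ($\tau \in \Int(S_Q)$ versus not, and the role of $\sgn(a)$) exactly right so that the finite sum in \eqref{curlyfexp} and the polynomial part of Theorem~\ref{local} emerge with the correct coefficients; I would organize this by first treating $\tau$ in a fixed connected component of $\H \setminus \bigcup_{Q\in[A]}S_Q$ and tracking which $Q$ have $\tau$ inside $S_Q$.
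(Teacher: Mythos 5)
Your strategy is genuinely different from the paper's. The paper never evaluates the unfolded integrals in closed form: it shows (Lemma~\ref{jumps}) that $\tau\mapsto\mathcal{C}(H_{k,k-1}(\cdot,\tau),A)$ is harmonic off $\bigcup_{Q\in[A]}S_Q$, bounded at the cusps (vanishing at $i\infty$ for $k=1$), and has the same jump singularities as $\mathcal{F}_{1-k,A}$ — the jump being extracted by subtracting the finite local sum $G_{\tau_0}$ and applying the residue theorem to $S_Q$ closed up along the real segment between its endpoints, with the real-segment piece discarded only because it is harmonic in $\tau$. The identity then follows from the rigidity statement that a harmonic function transforming with weight $2-2k$ and bounded at the cusps is constant (zero for $k>1$, and zero for $k=1$ by the normalization at $i\infty$), plus the averaging argument for $\tau\in S_Q$. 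Your plan instead unfolds $\mathcal{C}(H_{k,k-1}(\cdot,\tau),A)$ to $\sum_{Q\in[A]}\int_{S_Q}\frac{v^{2k-1}Q(z,1)^{k-1}}{(z-\tau)(z-\overline{\tau})^{2k-1}}\,dz$ and wants to evaluate each single-geodesic integral exactly, recovering the series \eqref{curlyfexp} term by term; that is a legitimate alternative route and would even yield more (a closed formula for each $Q$), but it places the entire burden on a computation you have not done.

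That computation is where the gap lies, and your sketch of it does not work as described. You cannot close $S_Q$ ``in $\H$'' at negligible cost: the only natural closure is the real segment between the endpoints of $S_Q$, and its contribution is a nontrivial harmonic function of $\tau$, not an error term; so the residue theorem only identifies the discontinuity of $\int_{S_Q}$ across $S_Q$ (exactly what the paper uses it for), not its value. Likewise the proposed substitution $t=Dv^2/|Q(z,1)|^2$ along $S_Q$ cannot produce $\psi\bigl(Dv^2/|Q(\tau,1)|^2\bigr)$: on $S_Q$ one has $|Q(z,1)|^2=D\,(\Im z)^2$, so $t=(v/\Im z)^2$ ranges over $[4a^2v^2/D,\infty)$ and in particular exceeds $1$, whereas the incomplete beta integral $\int_0^{w}t^{k-3/2}(1-t)^{-1/2}dt$ requires $w=Dv^2/|Q(\tau,1)|^2=D/(Q_\tau^2+D)\le 1$ and depends on $\tau$, not on the integration variable. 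So the claimed split ``residue term $+$ beta-function term'' is not the actual structure: the whole integral equals (a constant times) $\sgn(Q_\tau)Q(\tau,1)^{k-1}\psi\bigl(Dv^2/|Q(\tau,1)|^2\bigr)$, and proving that requires a genuine special-function evaluation (e.g.\ after normalizing $Q$ by an $\SL_2(\R)$ change of variables), which moreover cannot simply be cited from \cite{bringmannkanekohnen}, since their Theorem~7.1 is proved by Eichler-integral/theta-decomposition methods rather than by this integral identity. Until that evaluation (and, for $k=1$, its Hecke-trick version including the $E_{2,\Gamma}^*$ correction in \eqref{H1E2} and the normalization constant of $\mathcal{F}_{0,A}$) is supplied, the proof is incomplete; alternatively, once you know the unfolded integral's jump and decay, you could bypass the exact evaluation entirely by the paper's rigidity argument, which is precisely what the authors do.
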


Before we come to the proof of the theorem we state an important corollary, which immediately follows from Theorem~\ref{theorem locally harmonic maass form identity} together with Lemma~\ref{lemma fkP and Hk} and the identity \eqref{eq Hk and Hkk-1}.

\begin{corollary}\label{corollary main identity} We have
\[
\mathcal{C}\left(f_{k,P},A\right) = \frac{2^{k}|d|^{\frac{k-1}{2}}D^{k-\frac12}}{(k-1)! \,|\overline{\Gamma}_{P}|}R_{2-2k}^{k-1}\left(\mathcal{F}_{1-k,A}\right)(\tau_{P}),
\] 
where $R_{2-2k}^{k-1}$ denotes the iterated raising operator defined in Section~\ref{section petersson poincare}.
\end{corollary}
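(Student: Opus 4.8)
The plan is to chain together the three ingredients the excerpt has already supplied: Theorem~\ref{theorem locally harmonic maass form identity}, which realizes $\mathcal{F}_{1-k,A}(\tau)$ as a cycle integral of $H_{k,k-1}(\cdot,\tau)$; the relation \eqref{eq Hk and Hkk-1}, which says $R_{2-2k,\tau}^{k-1}H_{k,k-1}(z,\tau) = (k-1)! \, H_k(z,\tau)$; and Lemma~\ref{lemma fkP and Hk}, which identifies $H_k(z,\tau_P)$ with a multiple of $f_{k,P}(z)$. First I would start from Theorem~\ref{theorem locally harmonic maass form identity} and apply the iterated raising operator $R_{2-2k}^{k-1}$, acting in the variable $\tau$, to both sides. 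On the left this produces $R_{2-2k}^{k-1}(\mathcal{F}_{1-k,A})(\tau)$. On the right I need to move the operator $R_{2-2k,\tau}^{k-1}$ inside the cycle integral $\mathcal{C}(H_{k,k-1}(\cdot,\tau),A) = \int_{c_A} H_{k,k-1}(z,\tau)\,A(z,1)^{k-1}\,dz$; since the integration is over the $z$-variable along a compact geodesic cycle and $R_{2-2k,\tau}$ is a differential operator in $\tau$ only, differentiation under the integral sign is justified, giving $\int_{c_A} R_{2-2k,\tau}^{k-1}(H_{k,k-1}(z,\tau))\,A(z,1)^{k-1}\,dz = (k-1)! \int_{c_A} H_k(z,\tau)\,A(z,1)^{k-1}\,dz = (k-1)!\,\mathcal{C}(H_k(\cdot,\tau),A)$ by \eqref{eq Hk and Hkk-1}.

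Next I would specialize $\tau = \tau_P$ and invoke Lemma~\ref{lemma fkP and Hk}: for $k \geq 2$ it gives $H_k(z,\tau_P) = \frac{|\overline{\Gamma}_P|\pi}{2^{k-1}|d|^{\frac{k-1}{2}}} f_{k,P}(z)$ (solving the displayed identity for $H_k$), and substituting this into the cycle integral yields $\mathcal{C}(H_k(\cdot,\tau_P),A) = \frac{|\overline{\Gamma}_P|\pi}{2^{k-1}|d|^{\frac{k-1}{2}}}\,\mathcal{C}(f_{k,P},A)$. Combining the two displays, Theorem~\ref{theorem locally harmonic maass form identity} gives
\[
R_{2-2k}^{k-1}(\mathcal{F}_{1-k,A})(\tau_P) = \frac{D^{\frac12-k}}{2\pi}\,(k-1)!\,\frac{|\overline{\Gamma}_P|\pi}{2^{k-1}|d|^{\frac{k-1}{2}}}\,\mathcal{C}(f_{k,P},A),
\]
and solving for $\mathcal{C}(f_{k,P},A)$ produces exactly the factor $\frac{2^{k}|d|^{\frac{k-1}{2}} D^{k-\frac12}}{(k-1)!\,|\overline{\Gamma}_P|}$, after collecting the powers $\tfrac12 \cdot 2 / 2^{k-1} = 2^{2-k}$... — a short bookkeeping of constants that I would carry out carefully but which presents no conceptual difficulty. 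The case $k=1$ is handled separately: there $R_{2-2k}^{k-1}$ is the empty product (the identity), $H_{1,0}=H_1$ by convention, and one uses the second identity of Lemma~\ref{lemma fkP and Hk} together with the definition \eqref{fPmero} of $f_{1,P}$, noting that the $E_{2,\Gamma}^*$ correction terms match on both sides.

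The main obstacle, such as it is, is not in any single step but in the care needed when $\tau_P$ happens to lie on one of the semi-circles $S_Q$, $Q\in[P]$ — i.e., when $f_{k,P}$ has a pole on the cycle $c_A$. In that case both sides must be interpreted via the averaging/principal-value conventions \eqref{eq average value} and \eqref{eq Cauchy principal value}, and one must check that applying $R_{2-2k}^{k-1}$ commutes with taking the $\varepsilon\to 0$ average. Since $R_{2-2k}$ is a first-order differential operator with coefficients smooth away from the singular set, and the limit defining the average value is uniform on the relevant compact pieces (the integrands being meromorphic in $z$, with singularities of controlled order), this commutation is routine but should be remarked on; alternatively, one observes that both sides are real-analytic in $\tau$ off the semi-circles and the identity on the open dense complement propagates to the averaged values by continuity of the averaging operation. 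With that caveat noted, the corollary follows immediately as claimed.
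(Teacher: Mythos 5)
Your argument is exactly the paper's: the corollary is stated there as an immediate consequence of Theorem~\ref{theorem locally harmonic maass form identity}, the relation \eqref{eq Hk and Hkk-1}, and Lemma~\ref{lemma fkP and Hk}, and your constant bookkeeping (including the $k=1$ case via \eqref{fPmero} and \eqref{H1E2}) checks out, yielding precisely the factor $\frac{2^{k}|d|^{\frac{k-1}{2}}D^{k-\frac12}}{(k-1)!\,|\overline{\Gamma}_{P}|}$. The only slip is cosmetic: the singular case concerns $\tau_{P}$ lying on a semi-circle $S_{Q}$ with $Q\in[A]$ (equivalently, a pole of $f_{k,P}$ on $c_{A}$), not $Q\in[P]$, and it is handled by the averaging and principal-value conventions exactly as you indicate.
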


Note that a harmonic function on $\H$ which transforms like a modular form of weight $2-2k$ and is bounded at the cusps has to be a constant (and therefore vanishes if $k > 1$). Hence, in order to prove Theorem~\ref{theorem locally harmonic maass form identity} in the case that $\tau$ does not lie on $S_{Q}$ for $Q \in [A]$ it suffices to show that both sides in the theorem have the same singularities on $\H$ and are bounded at the cusps (and vanish at $i\infty$ if $k = 1$).

 We say that a function $f$ has a \emph{singularity of type $g$} at a point $\tau_{0}$ if there exists a neighbourhood $U$ of $\tau_{0}$ such that $f$ and $g$ are defined on a dense subset of $U$ and $f-g$ can be extended to a harmonic function on $U$. For example, Theorem~\ref{local} shows that the function $\mathcal{F}_{1-k,A}(\tau)$ has a singularity of type
 \begin{align*}
 (-1)^{k}2^{1-2k}D^{\frac{1}{2}-k} \sum_{\substack{Q = [a,b,c]\in [A]\\ \tau_0 \in S_Q}}\sgn(Q_{\tau})Q(\tau,1)^{k-1}
 \end{align*}
 at each point $\tau_{0} \in \H$, which easily follows from the fact that $\tau \in \Int(S_{Q})$ is equivalent to $\sgn(a)\sgn(Q_{\tau}) < 0$. 
  
\begin{lemma}\label{jumps}
The function $\mathcal{C}( H_{k,k-1}(\cdot,\tau), A)$ is harmonic on $\H\setminus \bigcup_{Q \in [A]}S_{Q}$ and bounded at the cusps. For $k = 1$ it vanishes at $i\infty$. At a point $\tau_{0} \in \H$ it has a singularity of type
	\[
(-1)^{k}2^{2-2k}\pi \sum_{\substack{Q = [a,b,c] \in [A] \\ \tau_0 \in S_Q}}\sgn(Q_{\tau})Q(\tau,1)^{k-1}.
	\]
\end{lemma}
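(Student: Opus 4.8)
The plan is to regard $F(\tau):=\mathcal{C}\big(H_{k,k-1}(\cdot,\tau),A\big)$ as a function of $\tau$ and to transfer each asserted property from the corresponding property of $H_{k,k-1}(z,\tau)$ in the $\tau$-variable, exploiting that the cycle $\Gamma_A\backslash S_A$ is compact. Since $z\mapsto H_{k,k-1}(z,\tau)$ is meromorphic with (at worst simple) poles exactly on the orbit $\Gamma\tau$, and $\Gamma S_A=\bigcup_{Q\in[A]}S_Q$, the integrand $H_{k,k-1}(z,\tau)A(z,1)^{k-1}$ has no pole on the path of integration whenever $\tau\notin\bigcup_{Q\in[A]}S_Q$; on that set $F$ is an honest integral of a function that is real-analytic in $\tau$ and has weight $2-2k$ in $\tau$ for $\Gamma$, inherited from $H_{k,k-1}$. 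I would record the weight transformation at the outset, since it is what makes the vanishing principle stated just before the lemma applicable afterwards.

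Harmonicity on $\H\setminus\bigcup_{Q\in[A]}S_Q$ then follows by differentiating under the integral sign: the invariant Laplacian in weight $2-2k$ commutes with $\int_{\Gamma_A\backslash S_A}(\,\cdot\,)A(z,1)^{k-1}\,dz$, and $H_{k,k-1}(z,\tau)$ is harmonic in $\tau$ for $z\notin\Gamma\tau$ by Section~\ref{section petersson poincare}. For the cusps I would note that if $\tau$ ranges over a small neighbourhood of a cusp, then $\Gamma\tau$ stays a positive distance from the compact geodesic --- for a cusp equivalent to $i\infty$ this is the familiar fact that a translate $M\tau$ of bounded imaginary part forces the lower-left entry of $M$ to be bounded away from $0$, hence $\Im(M\tau)$ small, so $M\tau$ is nowhere near the fixed semicircle --- so the integrand is jointly continuous there; then the boundedness of $\tau\mapsto H_{k,k-1}(z,\tau)$ at the cusps (uniform for $z$ on the compact geodesic) and its vanishing at $i\infty$ when $k=1$, both recorded in Section~\ref{section petersson poincare}, pass to $F$ under the integral.

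The heart of the argument, and the step I expect to be the most delicate, is the singularity at an interior point $\tau_0\in\H$. By local finiteness of $\{S_Q:Q\in[A]\}$ in $\H$, only finitely many $Q\in[A]$ satisfy $\tau_0\in S_Q$, and one may choose a neighbourhood $U\ni\tau_0$ meeting no other $S_Q$. For each such $Q$ pick $M_Q=\left(\begin{smallmatrix}\alpha&\beta\\\gamma&\delta\end{smallmatrix}\right)\in\Gamma$ with $M_Q\tau_0\in S_A$ (equivalently $S_Q=M_Q^{-1}S_A$); then for $\tau\in U$ the only poles of $z\mapsto H_{k,k-1}(z,\tau)A(z,1)^{k-1}$ that can meet the path are the simple poles at $z=M_Q\tau$, and $M_Q\tau$ crosses $S_A$ transversally exactly when $\tau$ crosses $S_Q$. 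I would split the cycle integral into local pieces near the points $M_Q\tau_0$ plus a remainder whose arc stays bounded away from all poles; the remainder is handled by the argument of the second paragraph and contributes a real-analytic (hence harmonic) function on $U$. Near $z=M_Q\tau$ one writes the integrand as $\tfrac{r_Q(\tau)}{z-M_Q\tau}+(\text{holomorphic in }z)$; the Cauchy principal value of the corresponding local integral equals a holomorphic function of $\tau$ plus $\pm i\pi\,r_Q(\tau)$, the sign being that of $Q_\tau$, i.e. which side of $S_A$ the pole lies on. Hence $F$ differs on $U$ from $\sum_{Q:\,\tau_0\in S_Q}(\pm i\pi)\,r_Q(\tau)\,\sgn(Q_\tau)$ by a function that extends real-analytically, hence harmonically, across the $S_Q$. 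Finally one computes $r_Q(\tau)$: pulling the local integral back by $M_Q$ (using $H_{k,k-1}(M_Qz,\tau)=(\gamma z+\delta)^{2k}H_{k,k-1}(z,\tau)$ and $A(M_Qz,1)^{k-1}\,d(M_Qz)=Q(z,1)^{k-1}(\gamma z+\delta)^{-2k}\,dz$) reduces to the pole at $z=\tau$ on $S_Q$, where the residue is $\Res_{z=\tau}H_{k,k-1}(z,\tau)\cdot Q(\tau,1)^{k-1}=2(2i)^{1-2k}Q(\tau,1)^{k-1}$; here the factor $2$ is because the terms of the defining series indexed by $M$ and $-M$ coincide in the even weight $2k$. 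Collecting the constant, $i\pi\cdot 2(2i)^{1-2k}=(-1)^{k+1}2^{2-2k}\pi$, so once the sign is pinned down this produces exactly the coefficient $(-1)^k2^{2-2k}\pi$ of the statement.

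The principal obstacle is precisely this sign bookkeeping: matching the orientation of the closed geodesic $\Gamma_A\backslash S_A$ against $\sgn(Q_\tau)$ (and the $\sgn(a)$ implicit in $\Int(S_Q)$) so that $\sgn(Q_\tau)Q(\tau,1)^{k-1}$ appears with exactly the claimed coefficient and no spurious sign or extra factor of $2$. A secondary point is to verify that subtracting the jump term leaves a function that is genuinely harmonic, not merely continuous --- which, as indicated, holds because the remainder is a cycle integral over an arc bounded away from the poles, hence is covered by the second paragraph --- and that the argument behaves correctly when $\tau_0$ lies on several $S_Q$ at once, in which case the finitely many jump contributions simply add. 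For $k=1$ the same reasoning applies with $H_{1,0}=H_1=H_1^*-2\pi E_{2,\Gamma}^*$, whose pole structure in $z$ and behaviour in $\tau$ (including vanishing at $i\infty$) were recorded in Section~\ref{section petersson poincare}.
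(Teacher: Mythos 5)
Your overall strategy is viable and is in essence a localized version of what the paper does: both proofs come down to a residue computation at the finitely many points $M_Q\tau$ that cross the closed geodesic, with the factor $2$ coming from $-I\in\Gamma$ and the orientation producing the sign. The paper packages this differently: it subtracts the auxiliary series $G_{\tau_0}(z,\tau)=\sum_{Q\in[A],\,\tau_0\in S_Q}\sum_{M\in\Gamma_Q}\bigl(v^{2k-1}(z-\tau)^{-1}(z-\overline{\tau})^{1-2k}\bigr)\big|_{2k,z}M$, observes that $\mathcal{C}(H_{k,k-1}-G_{\tau_0},A)$ is harmonic near $\tau_0$, unfolds $\mathcal{C}(G_{\tau_0},A)$ to $2\sum_Q\int_{S_Q}$, closes each semicircle by the real segment joining its endpoints (a harmonic contribution), and applies the residue theorem, getting $2\pi i\,\sgn(a)\cdot(2i)^{1-2k}Q(\tau,1)^{k-1}$ for $\tau\in\Int(S_Q)$ and $0$ otherwise; the conversion $\sgn(a)=-\sgn(Q_\tau)$ on $\Int(S_Q)$ then yields exactly the stated coefficient $(-1)^k2^{2-2k}\pi$. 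Your factor $2$ in $\Res_{z=\tau}H_{k,k-1}(z,\tau)=2(2i)^{1-2k}$ is correct and matches the paper's unfolding factor $2$, so there is no hidden factor-of-two discrepancy.

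The genuine gap is the one you yourself flag and then leave open: the sign. The constant you actually collect, $i\pi\cdot 2(2i)^{1-2k}=(-1)^{k+1}2^{2-2k}\pi$, is the \emph{negative} of the coefficient in the statement, and the lemma has no content beyond this coefficient; "once the sign is pinned down" is precisely the step that must be carried out. Concretely, you need the orientation argument the paper makes explicit: the cycle $c_A$ traverses $S_A$ counterclockwise iff the leading coefficient is positive, the pole $M_Q\tau$ lies in $\Int(S_A)$ iff $\sgn(Q_\tau)=-\sgn(a)$ (using $Q_\tau=A_{M_Q\tau}$ for $Q=A\circ M_Q$), and combining these supplies the additional factor $-\sgn(Q_\tau)$ relative to your raw constant. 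Two smaller repairs: the local pieces are not "holomorphic functions of $\tau$" (the integrand involves $\overline{\tau}$); they are real-analytic, and since subtracting the jump term only gives you a real-analytic extension of $F-g$ across the $S_Q$, you should add that a $C^2$ function harmonic on the dense open set $\H\setminus\bigcup_{Q\in[A]}S_Q$ is harmonic on all of $U$ — or, as in the paper, subtract a comparison function that is itself harmonic near $\tau_0$ so the issue never arises.
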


\begin{proof}
Since the function $\tau \mapsto H_{k,k-1}(z,\tau)$ is harmonic on $\H \setminus \Gamma z$, the function $\mathcal{C}(H_{k,k-1}(\cdot,\tau), A)$ is harmonic on $\H \setminus \bigcup_{Q \in [A]}S_{Q}$. Moreover, $\mathcal{C}( H_{k,k-1}(\cdot,\tau), A)$ is bounded at the cusps (and vanishes at $i\infty$ if $k = 1$) because the same is true for $\tau \mapsto H_{k,k-1}(z,\tau)$.

To determine the singularities, we keep $\tau_{0}\in\H$ fixed and consider the function
%$$
%\lim_{\varepsilon\rightarrow 0}\left(\mathcal{C}(H_{k,k-1}(\cdot,\tau_0+i\varepsilon), A)-\mathcal{C}(H_{k,k-1}(\cdot,\tau_0-i\varepsilon), A)\right).
%$$
%For this we write 
%$$
\begin{align*}
G_{\tau_0}(z, \tau) &:= \sum_{\substack{Q\in [A] \\ \tau_0 \in S_Q}}\sum_{M\in \Gamma_{Q}} \left(\frac{v^{2k-1}}{(z-\tau)(z-\overline{\tau})^{2k-1}}\right)\Big|_{2k, z} M.  
%&= \sum_{\substack{Q\in [A] \\ \tau_0 \in S_Q}}\sum_{M\in \Gamma_{Q}}  \left(\frac{v^{2k-1}}{(z-\tau)(z-\overline{\tau})^{2k-1}}\right)\Big|_{2-2k, \tau} M.
\end{align*}
Note that the sum over $Q \in [A]$ with $\tau_{0} \in S_{Q}$ is finite, and the group $\overline{\Gamma}_{Q}$ is infinite cyclic. It is not hard to show that the series converges absolutely and locally uniformly for all $k \geq 1$, and is meromorphic in $z$ and harmonic in $\tau$ for $\tau$ not lying in the $\Gamma$-orbit of $z$. We split the cycle integral into 
\[
\mathcal{C}(H_{k,k-1}(\cdot,\tau), A)  =\mathcal{C}(H_{k,k-1}(\cdot,\tau) - G_{\tau_0}(\cdot,\tau), A) + \mathcal{C}( G_{\tau_0}(\cdot, \tau), A).
\] 
The function 
%$$
%z\mapsto H_{k,k-1}(z,\tau) - G_{\tau_0}(z,\tau)
%$$
%is meromorphic and has no singularities on the half-circle $S_A$ and the function 
\[
\tau \mapsto \mathcal{C}(H_{k,k-1}(\cdot,\tau) - G_{\tau_{0}}(\cdot,\tau), A)
\]
is harmonic in a neighborhood of $\tau_0$. For the second summand we compute for any $\tau\notin \Gamma S_A$
\begin{align*}
\mathcal{C}( G_{\tau_0}(\cdot, \tau), A) &= \int_{c_A}\sum_{\substack{Q\in [A]\\ \tau_0 \in S_Q}} \sum_{M\in \Gamma_Q} \left(\left(\frac{v^{2k-1}}{(z-\tau)(z-\overline{\tau})^{2k-1}}\right)\Big|_{2k, z} M \right) A(z,1)^{k-1}dz \\
&=2\sum_{\substack{Q\in [A]\\ \tau_0 \in S_Q}}\int_{S_Q}\frac{v^{2k-1}}{(z-\tau)(z-\overline{\tau})^{2k-1}} Q(z,1)^{k-1}dz.
\end{align*} 
Note that the integrand is meromorphic in $z$. The integral is oriented counterclockwise if $a>0$ and clockwise if $a<0$. We complete $S_Q$ to a closed path by adding the horizontal line connecting the two real endpoints $w < w'$ of $S_Q$. 
The function
\[
\tau\mapsto \int_{w}^{w'}\frac{v^{2k-1}}{(x-\tau)(x-\overline{\tau})^{2k-1}} Q(x,1)^{k-1}dx
\]
is harmonic on $\H$, so it does not contribute to the singularity. From the residue theorem we obtain that the integral over the closed path equals $0$ if $\tau\notin \text{Int}(S_Q)$ and 
\begin{align*}
2\pi i \sgn(a) \operatorname{Res}_{z=\tau} \left(\frac{v^{2k-1}}{(z-\tau)(z-\overline{\tau})^{2k-1}} Q(z,1)^{k-1}\right) = (-1)^{k-1}2^{2-2k}\pi \sgn(a)Q(\tau,1)^{k-1}
\end{align*}
if $\tau\in \text{Int}(S_Q)$. This yields the claimed singularity.
%
%If we choose $\varepsilon$ small enough, we have $\tau_0+i\varepsilon\notin\text{Int}(S_Q)$ and $\tau_0-i\varepsilon\in\text{Int}(S_Q)$ for every $Q\in [A]$ with $\tau_0 \in S_Q$. This implies 
%\begin{align*}
%\lim_{\varepsilon\rightarrow 0}&\left(\mathcal{C}(H_{k,k-1}(\cdot,\tau_0+i\varepsilon), A)-\mathcal{C}(H_{k,k-1}(\cdot,\tau_0-i\varepsilon), A)\right) \\
%&= \lim_{\varepsilon\rightarrow 0}\left(\mathcal{C}(G_{\tau_0}(\cdot,\tau_0+i\varepsilon), A)-\mathcal{C}(G_{\tau_0}(\cdot,\tau_0-i\varepsilon), A)\right)\\
%&=(-1)^{k-1}2^{2-2k}\pi \sum_{Q\in [A]\atop \tau_0 \in S_Q}\sgn(a_Q)Q(\tau_0,1)^{k-1}
%\end{align*}
\end{proof}

%\begin{remark}
%We have $\sgn(Q_\tau) = \begin{cases} -\sgn(a) & \text{if $\tau\in\text{Int}(S_A)$} \\ \sgn(a) & \text{if $\tau\notin\text{Int}(S_A)$}\end{cases}$
%\end{remark}

%
%
%\begin{proof}[Proof of Theorem~\ref{local}]
%{\bf ToDo: Complete proof}
%The fact that $\int_{c_{A}}H_{k,k-1}(z,\tau)A(z,1)^{k-1}dz$ is bounded at $\infty$ follows from the fact for each $z_{0}$ there is a neighbourhood $V$ around $z_{0}$ that $H_{k,k-1}(z,\tau)$ is bounded at $\infty$ uniformly for $z \in V$.
%Lemmas \ref{diff} and \ref{jumps} imply that $\mathcal{F}_{1-k,A}(\tau)$ minus the two Eichler integrals is locally a constant. The sum in $P_{0,A}(\tau)$ can be read off from the singularities of $\mathcal{F}_{0,A}(\tau)$. That $c_{1}(A) = 0$ follows from the fact that $\lim_{\tau \to i\infty}\int_{c_{A}}H_{1}(z,\tau)dz = 0$, which follows from the fact that $H_{1}(z,\tau)$ vanishes at $\infty$ (as a function of $\tau$).
%\end{proof}

\begin{proof}[Proof of Theorem~\ref{theorem locally harmonic maass form identity}] By what we have said above, Theorem~\ref{theorem locally harmonic maass form identity} for $\tau$ not lying on $S_{Q}$ for any $Q \in [A]$ follows from the above lemma. By a similiar idea as in the proof of the lemma above we find that for $\tau$ lying on a semi-circle $S_{Q}$ for $Q \in [A]$ we have
	\begin{align*}
	\mathcal{C}(H_{k,k-1}(\cdot,\tau),A)
	&= \lim_{\varepsilon \to 0}\frac{1}{2}\big(\mathcal{C}(H_{k,k-1}(z,\tau+i\varepsilon),A)+\mathcal{C}(H_{k,k-1}(z,\tau-i\varepsilon),A) \big),
	\end{align*}
	where the cycle integral integral on the left-hand side is defined as the Cauchy principal value \eqref{eq Cauchy principal value}. This implies that Theorem~\ref{theorem locally harmonic maass form identity} is also true for $\tau$ lying on $S_{Q}$ for some $Q \in [A]$.
	\end{proof}

\section{The Proof of Theorem \ref{theorem traces rationality}}\label{section proof theorem traces rationality}

By Corollary~\ref{corollary main identity} we have the identity
\begin{align}\label{eq main identity}
\mathcal{C}\left(f_{k,P},A\right) = \frac{2^{k}|d|^{\frac{k-1}{2}}D^{k-\frac12}}{(k-1)! \,|\overline{\Gamma}_{P}|}R_{2-2k}^{k-1}\left(\mathcal{F}_{1-k,A}\right)(\tau_{P}).
\end{align}
Let $\mathcal{Q}$ be a finite family of indefinite quadratic forms $A \in \mathcal{Q}_{D}$ of non-square discriminants $D_{A} > 0$, and let $a_{A} \in \Z$ for $A \in \mathcal{Q}$ such that $\sum_{A \in \mathcal{Q}}a_{A}f_{k,A} = 0$. If we multiply \eqref{eq main identity} by $a_{A}$ and sum over $A \in \mathcal{Q}$, and then plug in the splitting of $\mathcal{F}_{1-k,A}$ from Theorem~\ref{local}, we see that the Eichler integrals $f_{k,A}^{*}$ and $\mathcal{E}_{f_{k,A}}$ cancel out due to the assumption $\sum_{A \in \mathcal{Q}}a_{A}f_{k,A} = 0$. Hence we obtain
\begin{align*}
\sum_{A \in \mathcal{Q}}a_{A}\mathcal{C}\left(f_{k,P},A\right) = \frac{2^{k}|d|^{\frac{k-1}{2}}}{(k-1)! \,|\overline{\Gamma}_{P}|}\sum_{A \in \mathcal{Q}}a_{A }D_{A}^{k-\frac12}R_{2-2k}^{k-1}\left(P_{1-k,A}\right)(\tau_{P}),
\end{align*}
where $P_{1-k,A}$ is the local polynomial defined in Theorem~\ref{local}. The action of the iterated raising operator on $P_{1-k,A}$ has been computed in Lemmas 5.3 and 5.4 of \cite{anbs}, and is given as follows.

\begin{lemma}\label{raisingP}
For $\tau \in \H \setminus \bigcup_{Q \in [A]}S_{Q}$ we have
\begin{align*}
 R_{2-2k}^{k-1}\left(P_{1-k,A}\right)(\tau) = \frac{(k-1)!}{2^{k}D^{k-\frac{1}{2}}}\mathcal{P}_{k,A}(\tau),
% =(-1)^{k}(k-1)!\Biggl(\frac{(\zeta_{N,A}(k)+(-1)^k \zeta_{N,-A}(k))}{2^{2k-2}(2k-1)v^{k-1} }\\ -\left(\frac{i}{2}\right)^{k-1}D^{-\frac{k}{2}}\sum_{\substack{Q = [a,b,c] \in [A] \\\tau \in \Int(C_{Q})}}\operatorname{sgn}(a) P_{k-1}\left(\frac{iQ_\tau}{\sqrt{D}}\right)\Biggr).
\end{align*}
where $\mathcal{P}_{k,A}(\tau)$ is the function defined in \eqref{eq def local polynomial}.
\end{lemma}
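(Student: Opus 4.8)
The plan is to apply the iterated raising operator $R_{2-2k}^{k-1}$ directly to the explicit description of $P_{1-k,A}$ from Theorem~\ref{local}, treating its constant part and its genuinely polynomial part separately. Writing
\[
P_{1-k,A}(\tau) = c_k(A) + (-1)^{k-1}2^{2-2k}D^{\frac12-k}\sum_{\substack{Q=[a,b,c]\in[A]\\ \tau\in\Int(S_Q)}}\sgn(a)\,Q(\tau,1)^{k-1},
\]
and noting that on each connected component of $\H\setminus\bigcup_{Q\in[A]}S_Q$ the indicator of $\Int(S_Q)$ is constant, the operator $R_{2-2k}^{k-1}$ acts componentwise, so it suffices to compute it on the constant $c_k(A)$ and on the holomorphic function $Q(\tau,1)^{k-1}$.

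For the constant I would iterate the elementary identity $R_w(v^{-m}) = (w-m)v^{-m-1}$, immediate from $R_w = 2i\partial_\tau + wv^{-1}$ and $2i\partial_\tau(v^{-1}) = -v^{-2}$. Starting in weight $2-2k$ this yields $R_{2-2k}^{k-1}(c) = c\,v^{1-k}\prod_{i=0}^{k-2}(2-2k+i) = (-1)^{k-1}\frac{(2k-2)!}{(k-1)!}\,c\,v^{1-k}$, and substituting $c=c_k(A)$, using $\binom{2k-2}{k-1}=\frac{(2k-2)!}{((k-1)!)^2}$ together with the identity $(-1)^k\big(\zeta_{\Gamma,A}(k)+(-1)^k\zeta_{\Gamma,-A}(k)\big) = (-1)^k\zeta_{\Gamma,A}(k)+\zeta_{\Gamma,-A}(k)$, reproduces exactly the first line of $\mathcal{P}_{k,A}$ in \eqref{eq def local polynomial}. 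For $k=1$ this contribution is absent, consistent with $c_1(A)=0$.

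The heart of the matter is the polynomial part, for which one needs the identity
\[
R_{2-2k}^{k-1}\big(Q(\tau,1)^{k-1}\big) = 2^{k-1}(k-1)!\,(i\sqrt D)^{k-1}\,P_{k-1}\!\left(\frac{iQ_\tau}{\sqrt D}\right), \qquad Q_\tau := \tfrac1v\big(a|\tau|^2+b\Re(\tau)+c\big).
\]
I would establish this in one of two ways. Directly: prove by induction on $n$ the expansion $R_\kappa^n g = \sum_{j=0}^n\binom nj(\kappa+j)_{n-j}(2i)^jv^{j-n}g^{(j)}$ for $g$ holomorphic of weight $\kappa$ (with $(x)_m := x(x+1)\cdots(x+m-1)$), apply it with $\kappa=2-2k$, $n=k-1$, $g=Q(\tau,1)^{k-1}$ — computing the derivatives $g^{(j)}$ using $Q'(\tau,1)^2 = 4aQ(\tau,1)+D$ — and then identify the resulting sum over $j$, with the help of the elementary relation $|Q(\tau,1)|^2 = v^2(Q_\tau^2+D)$ and the Rodrigues formula $P_{k-1}(x)=\frac{1}{2^{k-1}(k-1)!}\frac{d^{k-1}}{dx^{k-1}}(x^2-1)^{k-1}$, as the claimed Legendre value. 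Alternatively one can reuse the residue computation from the proof of Lemma~\ref{jumps}: by \eqref{eq Hk and Hkk-1}, applied to a single term of the Poincar\'e series, $R_{2-2k,\tau}^{k-1}$ sends $v^{2k-1}(z-\tau)^{-1}(z-\overline{\tau})^{1-2k}$ to $(k-1)!\,v^k(z-\tau)^{-k}(z-\overline{\tau})^{-k}$; integrating $Q(z,1)^{k-1}$ against this kernel over a small fixed circle around $\tau$ and comparing residues (a simple pole on the left against a pole of order $k$ on the right) gives $R_{2-2k}^{k-1}(Q(\tau,1)^{k-1}) = (2i)^{2k-1}(k-1)!\,\Res_{z=\tau}\frac{v^kQ(z,1)^{k-1}}{(z-\tau)^k(z-\overline{\tau})^k}$, and the remaining residue is the classical Schl\"afli-type contour representation of $P_{k-1}$ after the linear substitution $z\mapsto(2az+b)/\sqrt D$. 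Plugging this identity back into the componentwise sum and collecting the powers of $2$, $i$ and $D$ — the decisive point being $(-1)^{k-1}i^{k-1}=(-i)^{k-1}$ — produces the second line of $\mathcal{P}_{k,A}$.

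I expect the only genuine obstacle to be this last identity, that is, matching the $j$-sum from the iterated raising operator (equivalently, the order-$k$ residue) to a single Legendre polynomial with exactly the right normalisation; the constant-term computation and the final reassembly are routine. This is essentially the content of Lemmas~5.3 and~5.4 of \cite{anbs}.
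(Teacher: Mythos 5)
Your computation is correct, and it is essentially the proof the paper itself omits: the paper gives no argument for Lemma~\ref{raisingP} beyond citing Lemmas~5.3 and~5.4 of \cite{anbs}, which carry out exactly the kind of term-by-term application of $R_{2-2k}^{k-1}$ to the constant and polynomial parts of $P_{1-k,A}$ that you describe. Your bookkeeping checks out: iterating $R_{w}(v^{-m})=(w-m)v^{-m-1}$ on $c_k(A)$ gives $(-1)^{k-1}\frac{(2k-2)!}{(k-1)!}c_k(A)v^{1-k}$, which matches the first line of \eqref{eq def local polynomial} after dividing by $\binom{2k-2}{k-1}$, and the reassembly of the powers of $2$, $i$, $D$ via $(-1)^{k-1}i^{k-1}=(-i)^{k-1}$ is right; I also verified your key identity $R_{2-2k}^{k-1}\big(Q(\tau,1)^{k-1}\big)=2^{k-1}(k-1)!\,(i\sqrt{D})^{k-1}P_{k-1}\big(iQ_\tau/\sqrt{D}\big)$ explicitly for $k=2,3$, and the $k=1$ degeneration is consistent. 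The one place where your sketch is looser than it should be is the Legendre identification itself, which you correctly flag as the only real work: in the residue route, after the substitution $t=(2az+b)/\sqrt{D}$ the integrand is $(t^2-1)^{k-1}(t-t_\tau)^{-k}(t-t_{\overline{\tau}})^{-k}$ with $t_\tau=(2a\tau+b)/\sqrt{D}$, so it is not literally Schl\"afli's one-pole formula but its two-pole variant; carrying that out one finds the argument $\frac{1-t_\tau t_{\overline{\tau}}}{t_\tau-t_{\overline{\tau}}}=\frac{iQ_\tau}{\sqrt{D}}$, confirming your claim, but this step (or the corresponding identification of the $j$-sum in the direct route with the Rodrigues formula) would need to be written out to make the argument self-contained rather than deferring, as the paper does, to \cite{anbs}.
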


We arrive at
\begin{align*}
\sum_{A \in \mathcal{Q}}a_{A}\mathcal{C}\left(f_{k,P},A\right) = \frac{|d|^{\frac{k-1}{2}}}{|\overline{\Gamma}_{P}|}\sum_{A \in \mathcal{Q}}a_{A}\mathcal{P}_{k,A}(\tau_{P}),
\end{align*}
which is the formula from Theorem~\ref{theorem traces rationality}. Finally, we show that the right-hand side is rational.

\begin{lemma}\label{lemma P rational}
For any CM-point $\tau_P\in\H$ of discriminant $d < 0$, we have
\[
|d|^{\frac{k-1}{2}}\mathcal{P}_{k,A}(\tau_P)\in\Q.
\]
\end{lemma}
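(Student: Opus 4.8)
The plan is to show that $|d|^{\frac{k-1}{2}}\mathcal{P}_{k,A}(\tau_P)$ is rational by examining each of the two terms in the defining formula \eqref{eq def local polynomial} separately. Recall that
\[
\mathcal{P}_{k,A}(\tau) = D^{k-\frac12}\frac{(-1)^k\zeta_{\Gamma,A}(k)+\zeta_{\Gamma,-A}(k)}{2^{k-2}(2k-1)\Im(\tau)^{k-1}} + 2(-i\sqrt{D})^{k-1}\sum_{\substack{Q=[a,b,c]\in[A]\\ \tau\in\Int(S_Q)}}\sgn(a)P_{k-1}\!\left(\frac{i(a|\tau|^2+b\Re(\tau)+c)}{\Im(\tau)\sqrt{D}}\right),
\]
with the first line omitted when $k=1$.

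\textbf{First term.} For the first summand, note that by Proposition~\ref{proposition zeta level lowering} the quantity $D^{k-\frac12}\big((-1)^k\zeta_{\Gamma,A}(k)+\zeta_{\Gamma,-A}(k)\big)$ is rational (here I am using that the expression $\zeta_{\Gamma,A}(k)+(-1)^k\zeta_{\Gamma,-A}(k)$ appearing in Proposition~\ref{proposition zeta level lowering} is, up to reindexing $A \leftrightarrow -A$, the same as the combination appearing here; alternatively one checks directly that multiplying through by $(-1)^k$ does not affect rationality). It therefore remains to handle the factor $|d|^{\frac{k-1}{2}}/\Im(\tau_P)^{k-1}$. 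Since $\tau_P$ is the CM point attached to a positive definite form $P=[A_0,B_0,C_0]\in\mathcal{Q}_d$, we have $\tau_P = \frac{-B_0+i\sqrt{|d|}}{2A_0}$, so $\Im(\tau_P) = \sqrt{|d|}/(2A_0)$ and hence $|d|^{\frac{k-1}{2}}/\Im(\tau_P)^{k-1} = (2A_0)^{k-1}\in\Q$ (in fact $\in\Z$). Thus the first term, multiplied by $|d|^{\frac{k-1}{2}}$, is rational.

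\textbf{Second term.} For the sum over $Q$, fix $Q=[a,b,c]\in[A]$ with $\tau_P\in\Int(S_Q)$. The argument of the Legendre polynomial is $w_Q := i\,Q_{\tau_P}/\sqrt{D}$ where $Q_{\tau_P} = \frac{1}{\Im(\tau_P)}(a|\tau_P|^2+b\Re(\tau_P)+c)$. Writing $\tau_P = \frac{-B_0+i\sqrt{|d|}}{2A_0}$ and expanding, one gets $\Im(\tau_P)\cdot Q_{\tau_P} = a|\tau_P|^2+b\Re(\tau_P)+c$, which is an explicit rational combination of $a,b,c,A_0,B_0,C_0,d$; dividing by $\Im(\tau_P)=\sqrt{|d|}/(2A_0)$ shows $Q_{\tau_P}\in\frac{1}{\sqrt{|d|}}\Q$. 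Therefore $w_Q = iQ_{\tau_P}/\sqrt{D}\in\frac{i}{\sqrt{|d|D}}\Q$, so $w_Q^2\in\frac{1}{|d|D}\Q\subset\Q$. Since $P_{k-1}$ has integer (hence rational) coefficients and parity $(-1)^{k-1}$, the value $P_{k-1}(w_Q)$ is $w_Q^{k-1}$ times an even polynomial in $w_Q$, i.e. $P_{k-1}(w_Q) = w_Q^{k-1}\cdot(\text{rational})$ when $k-1$ is even, and $P_{k-1}(w_Q)=w_Q\cdot(w_Q^{k-2}\cdot\text{rational})$... more cleanly: $P_{k-1}(w_Q)/w_Q^{\,k-1-2\lfloor (k-1)/2\rfloor}$ — let me just say: $(-i\sqrt{D})^{k-1}P_{k-1}(w_Q)$ is a rational linear combination of $(-i\sqrt D)^{k-1}w_Q^{j}$ over $j\equiv k-1\pmod 2$, and each such term equals $(\pm)D^{(k-1-j)/2}|d|^{-j/2}\cdot(\text{rational})$, which is rational precisely after multiplying by $|d|^{(k-1)/2-\lfloor\cdot\rfloor}$... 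The clean statement: since $w_Q^2\in\Q$ and $(-i\sqrt D)^{k-1}w_Q^{k-1}=(-i\sqrt D)^{k-1}(iQ_{\tau_P}/\sqrt D)^{k-1}=Q_{\tau_P}^{k-1}\in|d|^{-(k-1)/2}\Q$, and because $P_{k-1}(w)$'s terms all have the same parity as $k-1$, every term of $(-i\sqrt D)^{k-1}P_{k-1}(w_Q)$ lies in $|d|^{-(k-1)/2}\Q$. Hence multiplying by $|d|^{(k-1)/2}$ makes the whole second term rational.

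\textbf{Main obstacle.} The computations are all elementary; the only point requiring care is the bookkeeping of the $\sqrt{D}$ and $\sqrt{|d|}$ factors in the Legendre-polynomial term — specifically verifying that the mismatched parities in $P_{k-1}$ do not produce an irrational residue, which works out because $w_Q$ is a purely imaginary quantity whose square is rational, and $P_{k-1}$ is even or odd according to the parity of $k-1$, so that $(-i\sqrt D)^{k-1}P_{k-1}(w_Q)\in |d|^{-(k-1)/2}\Q$ exactly. Once both terms are seen to lie in $|d|^{-(k-1)/2}\Q$, multiplication by $|d|^{(k-1)/2}$ gives the claim. The extension of this argument from $\tau$ not on any $S_Q$ to $\tau_P$ lying on some $S_Q$ is not needed here, since a CM point $\tau_P$ for $P\in[A]$... actually if $\tau_P$ lies on some $S_Q$ with $Q\in[A]$ one uses the average-value definition, but the averaged expression is still a limit of the same rational-valued formula (the first term is continuous and the jump contributions in the second term are averaged), so rationality persists; I would include a remark to this effect.
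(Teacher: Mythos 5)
Your proposal is correct and follows essentially the same route as the paper: write $\tau_P$ explicitly so that $|d|^{\frac{k-1}{2}}/\Im(\tau_P)^{k-1}$ and $\sqrt{|d|}\,Q_{\tau_P}$ are rational, invoke Proposition~\ref{proposition zeta level lowering} for the zeta term (the combination $(-1)^k\zeta_{\Gamma,A}(k)+\zeta_{\Gamma,-A}(k)$ differs from it only by the factor $(-1)^k$, as you note), and use the parity of the Legendre polynomial $P_{k-1}$ to absorb the powers of $\sqrt{D}$ and $\sqrt{|d|}$. Despite the somewhat tangled bookkeeping in the middle, your "clean statement" is the correct argument, and your closing remark on the average-value case is a harmless addition beyond what the paper records.
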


\begin{proof}
If $P = [a,b,c]$ with $a > 0$, then $\tau_{P}$ is given by
\[
\tau_{P} = \frac{-b+i\sqrt{|d|}}{2a}.
\]
In particular, $\frac{\sqrt{|d|}}{\Im(\tau_P)}$ and $\sqrt{|d|}Q_{\tau_{P}}$ are rational. We have seen in Proposition~\ref{proposition zeta level lowering} that 
\[
D^{k-\frac12}\left(\zeta_{\Gamma,A}(k)+(-1)^k \zeta_{\Gamma,-A}(k)\right)
\]
is a rational number for $k \geq 2$ (and this expression does not occur in $\mathcal{P}_{k,A}$ for $k = 1$). Moreover, the Legendre poynomial $P_{k-1}$ is odd if $k$ is even and even if $k$ is odd. Hence
\[
|d|^{\frac{k-1}{2}}(i\sqrt{D})^{k-1}P_{k-1}\left(\frac{iQ_{\tau_P}}{\sqrt{D}}\right)
\]
is rational. Combining all these facts we see that $|d|^{\frac{k-1}{2}}\mathcal{P}_{k,A}(\tau_P)$ is a rational number.
\end{proof}

\section{The Proof of Theorem \ref{theorem traces rationality 2}}\label{section proof theorem traces rationality 2}

For $(m,N) = 1$ the $m$-th Hecke operator $T_{m}$ on a function $f$ transforming like a modular form of weight $2k$ for $\Gamma$ is defined by
\begin{align}
\label{eq definition Tn}
f| T_{m} := m^{k-1}\sum_{M\in\Gamma\backslash \mathcal{M}_m(N)}f|_{2k}M,
\end{align}
where $\mathcal{M}_m (N)$ is the set of integral $2$ by $2$ matrices of determinant $m$ whose lower left entry is divisible by $N$, and the slash operator is defined by $(f|_{2k}M)(z) := \det(M)^{k}j(M,z)^{-2k}f(Mz)$. It acts on the Fourier expansion of a cusp form $f(z) = \sum_{n=1}^{\infty}c_{f}(n)q^{n} \in S_{2k}(\Gamma)$ by
\[
(f|T_{m})(z) = \sum_{n=1}^{\infty}\sum_{d\mid (m,n)}d^{2k-1}c_{f}(mn/d^{2})q^{n}.
\]

In order to show Theorem~\ref{theorem traces rationality 2} we would like to use the splitting of $\mathcal{F}_{1-k,A}$ from Theorem~\ref{local} and get rid of the Eichler integrals by taking suitable linear combinations. To this end, the following well-known lemma is useful.

\begin{lemma}\label{cusprelation}
	If $\underline{\lambda}$ is a relation for $S_{2k}(\Gamma)$, then $f|T_{\underline{\lambda}} = 0$ for every $f \in S_{2k}(\Gamma)$.
\end{lemma}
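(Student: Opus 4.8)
The plan is to unwind the definitions and reduce the statement to the defining property of a relation. Recall that a relation $\underline{\lambda} = (\lambda_m)$ for $S_{2k}(\Gamma)$ satisfies $\sum_{m\geq 1}\lambda_m c_f(m) = 0$ for every cusp form $f = \sum_{m\geq 1}c_f(m)q^m \in S_{2k}(\Gamma)$, and by definition $f|T_{\underline{\lambda}} = \sum_{m\geq 1}\lambda_m f|T_m$. Since $f|T_{\underline{\lambda}}$ is again a cusp form of weight $2k$ for $\Gamma$ (being a finite linear combination of Hecke translates of $f$), it suffices to show that all of its Fourier coefficients at $i\infty$ vanish.

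First I would write out the $n$-th Fourier coefficient of $f|T_{\underline{\lambda}}$ using the formula for the action of $T_m$ on Fourier expansions, namely $(f|T_m)(z) = \sum_{n\geq 1}\big(\sum_{d\mid(m,n)}d^{2k-1}c_f(mn/d^2)\big)q^n$. Thus the $n$-th coefficient of $f|T_{\underline{\lambda}}$ is $\sum_{m\geq 1}\lambda_m\sum_{d\mid(m,n)}d^{2k-1}c_f(mn/d^2)$. The next step is to recognize this as a value of the defining functional of the relation applied to a suitable cusp form. The standard trick is to use that the Hecke operators are self-adjoint (or, more elementarily, that $c_{f|T_n}(m) = c_{f|T_m}(n)$, which follows from the multiplicative structure $T_mT_n = \sum_{d\mid(m,n)}d^{2k-1}T_{mn/d^2}$ together with $c_g(1)$ picking out the action on the normalized eigenform, or can be checked directly from the coefficient formula). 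Using $c_{f|T_m}(n) = c_{f|T_n}(m)$, the $n$-th coefficient of $f|T_{\underline{\lambda}}$ becomes $\sum_{m\geq 1}\lambda_m c_{f|T_n}(m)$, which is exactly the relation functional evaluated at the cusp form $f|T_n \in S_{2k}(\Gamma)$, hence equals $0$ by property (2) of a relation. Since this holds for every $n\geq 1$, we get $f|T_{\underline{\lambda}} = 0$.

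The only mildly delicate point is the symmetry identity $c_{f|T_m}(n) = c_{f|T_n}(m)$ for all $m,n$ with $(mn,N)$ having no common factor constraints beyond what the Hecke theory at level $N$ requires; for $(m,N) = 1$ this is classical. One can either cite it as well known or verify it from the explicit double-coset formula: $c_{f|T_m}(n) = \sum_{d\mid(m,n)}d^{2k-1}c_f(mn/d^2)$ is manifestly symmetric in $m$ and $n$, so in fact no self-adjointness is needed at all — the symmetry is visible directly from the coefficient formula already quoted in the paper. This makes the proof essentially immediate. I expect the main (very minor) obstacle to be nothing more than making sure the Hecke operators $T_m$ appearing in $\underline{\lambda}$ are only those with $(m,N) = 1$ (guaranteed by property (3) of a relation), so that the level-$N$ coefficient formula applies verbatim; with that in place the argument is a one-line computation.

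\begin{proof}
Fix a cusp form $f = \sum_{n\geq 1}c_f(n)q^n \in S_{2k}(\Gamma)$. Since $\lambda_m = 0$ for all but finitely many $m$ and $\lambda_m = 0$ whenever $(m,N) > 1$, the Hecke translate $f|T_{\underline{\lambda}} = \sum_{m\geq 1}\lambda_m f|T_m$ is a finite linear combination of cusp forms $f|T_m$ with $(m,N) = 1$, hence lies in $S_{2k}(\Gamma)$. To prove $f|T_{\underline{\lambda}} = 0$ it therefore suffices to show that its $n$-th Fourier coefficient at $i\infty$ vanishes for every $n \geq 1$. Using the formula for the action of $T_m$ on Fourier expansions, the $n$-th coefficient of $f|T_m$ equals $c_{f|T_m}(n) = \sum_{d\mid(m,n)}d^{2k-1}c_f(mn/d^2)$, which is symmetric in $m$ and $n$; thus $c_{f|T_m}(n) = c_{f|T_n}(m)$. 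Consequently, the $n$-th coefficient of $f|T_{\underline{\lambda}}$ is
\[
\sum_{m\geq 1}\lambda_m c_{f|T_m}(n) = \sum_{m\geq 1}\lambda_m c_{f|T_n}(m).
\]
Since $f|T_n \in S_{2k}(\Gamma)$ and $\underline{\lambda}$ is a relation for $S_{2k}(\Gamma)$, property (2) of a relation gives $\sum_{m\geq 1}\lambda_m c_{f|T_n}(m) = 0$. As $n \geq 1$ was arbitrary, all Fourier coefficients of $f|T_{\underline{\lambda}}$ vanish, so $f|T_{\underline{\lambda}} = 0$.
\end{proof}
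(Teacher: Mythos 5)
Your proof is correct and follows essentially the same route as the paper: both compute the Fourier coefficients of $f|T_{\underline{\lambda}}$, exploit the symmetry of the divisor-sum formula $\sum_{d\mid(m,n)}d^{2k-1}c_f(mn/d^2)$ in $m$ and $n$ to recognize them as the relation functional applied to a Hecke translate of $f$, and then invoke property (2) of a relation. The paper does this in one display by swapping the order of summation, while you phrase it via the identity $c_{f|T_m}(n)=c_{f|T_n}(m)$, but the argument is the same.
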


\begin{proof}
	We have
	\[
	(f|T_{\underline{\lambda}})(z) = \sum_{n=1}^{\infty}\lambda_{n}\sum_{m=1}^{\infty}\sum_{d \mid (m,n)}d^{2k-1}c_{f}(mn/d^{2})q^{m} = \sum_{m=1}^{\infty}\sum_{n=1}^{\infty}\lambda_{n}\sum_{d \mid (m,n) }d^{2k-1}c_{f}(mn/d^{2})q^{m}.
	\]
%	Here we used that the $m$-th coefficient of $f|T_{n}$ is the $n$-th coefficient of $f|T_{m}$. 
	Since the innermost sum is just the $n$-th coefficient of the cusp form $f|T_{m}$, the sum over $n$ vanishes by the definition of a relation for $S_{2k}(\Gamma)$.
\end{proof}

An important ingredient in the proof of Theorem~\ref{theorem traces rationality 2} is the fact that Petersson's Poincar\'e series $H_{k}(z,\tau)$ behaves nicely under the action of Hecke operators.

\begin{lemma}\label{HkHecke}
For $k \geq 1$ and $(m,N) = 1$ we have 
\[
H_k(z,\tau)|_ {z} T_m =  m^{k}  H_k(z,\tau)|_ {\tau} T_m.
\]
\end{lemma}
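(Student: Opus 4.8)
The plan is to prove the Hecke equivariance of $H_k(z,\tau)$ directly from its definition as a Poincar\'e series, exploiting the fact that $H_k(z,\tau)$ is built from the single seed function $\phi_\tau(z) := \left(\frac{(z-\tau)(z-\overline\tau)}{v}\right)^{-k}$, which is \emph{symmetric} in the roles of $z$ and $\tau$ in the sense that the same expression gives weight $2k$ in $z$ and weight $0$ in $\tau$ (as recorded in Section~\ref{section petersson poincare}). First I would unfold both sides: by definition, $H_k(z,\tau)|_z T_m = m^{k-1}\sum_{M \in \Gamma\backslash\mathcal{M}_m(N)} H_k(z,\tau)|_{2k,z} M$, and since $H_k(z,\tau) = \sum_{\gamma \in \Gamma} \phi_\tau(z)|_{2k,z}\gamma$, combining the two sums over $\Gamma$ and over coset representatives $\Gamma\backslash\mathcal{M}_m(N)$ collapses to a single sum over all of $\mathcal{M}_m(N)$, giving
\[
H_k(z,\tau)|_z T_m = m^{k-1}\sum_{M \in \mathcal{M}_m(N)} \phi_\tau(z)\big|_{2k,z}M.
\]
The analogous unfolding on the right-hand side gives $m^k \cdot H_k(z,\tau)|_\tau T_m = m^k \cdot m^{k-1}\sum_{M \in \mathcal{M}_m(N)} \phi_\tau(z)\big|_{0,\tau}M$; here I must be slightly careful because the weight-$0$ slash in the $\tau$ variable uses $\det(M)^0 = 1$ whereas in the $z$ variable the weight-$2k$ slash carries a factor $\det(M)^k = m^k$, which is precisely the source of the asymmetry $m^k$ in the statement.

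The heart of the matter is then the elementary identity, for a single matrix $M = \left(\begin{smallmatrix} \alpha & \beta \\ \gamma & \delta\end{smallmatrix}\right)$ of determinant $m$, relating $\phi_\tau(z)|_{2k,z}M$ (slash acting on $z$, with the $\det^k$ normalization) to $\phi_{M\tau}(z)$ up to an automorphy factor in $\tau$. Concretely, using $Mz - M\tau = \frac{m(z-\tau)}{(\gamma z+\delta)(\gamma\tau+\delta)}$ and $Mz - \overline{M\tau} = \frac{m(z-\overline\tau)}{(\gamma z+\delta)(\gamma\overline\tau+\delta)}$ together with $\Im(M\tau) = \frac{m\,v}{|\gamma\tau+\delta|^2}$, a direct computation shows
\[
\phi_{M\tau}(z) = \left(\frac{(z-M\tau)(z-\overline{M\tau})}{\Im(M\tau)}\right)^{-k}
= m^{-k}|\gamma\tau+\delta|^{2k} \cdot \frac{(\gamma z+\delta)^{2k}}{|\gamma z + \delta|^{2k}}\cdot\left(\frac{|\gamma\tau+\delta|^{-2}}{1}\right)^{?}\dots
\]
— more cleanly, the right bookkeeping is that $\det(M)^k j(M,z)^{-2k}\phi_\tau(Mz)$, i.e.\ $\phi_\tau(z)|_{2k,z}M$, equals $m^{-k}\cdot(\gamma\tau+\delta)^{-2k}\,\phi_{M\tau}(z)\cdot(\text{correction})$; one verifies the factor works out so that $m^k \cdot \phi_\tau(z)|_{0,\tau}M = \phi_\tau(z)|_{2k,z}M'$ for a suitable $M'$. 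I would carry this out carefully for one matrix, isolating the $\det$-dependence.

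Once the single-matrix identity is in hand, the final step is a bijection of index sets: I need that as $M$ ranges over $\mathcal{M}_m(N)$, the matrices appearing in the $\tau$-unfolding range over the same set (up to $\Gamma$-action already absorbed), so the two sums over $\mathcal{M}_m(N)$ match term-by-term after accounting for the $\det$-factor. Here one uses that $\mathcal{M}_m(N)$ is stable under the relevant operations and that $\Gamma$-left-multiplication permutes it; the symmetry is essentially the statement that the Hecke operator on Poincar\'e series can be ``moved to the other variable.'' I expect the main obstacle to be purely the careful tracking of the $\det(M)^k = m^k$ normalization factors: getting the exact power of $m$ right (so that $m^k$ and not $m^{k-1}$ or $m^{2k-1}$ appears) requires matching the $\det^k$ in the weight-$2k$ $z$-slash against the $\det^0$ in the weight-$0$ $\tau$-slash, and this is where sign/exponent slips are most likely. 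The convergence issues are benign for $k\ge 2$ by absolute convergence; for $k = 1$ one runs the same computation at the level of $H_{1,s}(z,\tau)$ (whose seed also has this $z\leftrightarrow\tau$ symmetry) and passes to the analytic continuation at $s=0$, noting that $E_{2,\Gamma}^*$ — which enters $H_1$ via \eqref{H1E2} — is itself Hecke-equivariant with eigenvalue $\sigma_1(m) = m+1$... actually here one must be a little more careful and check that the $E_{2,\Gamma}^*$ correction term is compatible, which I would do using Lemma~\ref{E2N}.
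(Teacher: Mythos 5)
Your overall strategy is exactly the paper's: unfold both Hecke operators into a single sum over $\mathcal{M}_m(N)$, prove a one-matrix identity that moves the slash from the $z$-variable to the $\tau$-variable via the adjugate matrix, and observe that adjugation permutes $\mathcal{M}_m(N)$. However, as written your bookkeeping contains two compensating errors, and the central verification is left unfinished (the trailing ``$?$'' display), so the exponent of $m$ is never actually pinned down. First, when you unfold the right-hand side you give the $\tau$-side Hecke operator the prefactor $m^{k-1}$; but $H_k(z,\tau)$ has weight $0$ in $\tau$, so \eqref{eq definition Tn} applied in that variable carries the prefactor $m^{-1}$, giving $m^k\,H_k(z,\tau)|_\tau T_m = m^{k-1}\sum_{M\in\mathcal{M}_m(N)}\phi_\tau(z)|_{0,\tau}M$ (not $m^{2k-1}\sum$). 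Second, the single-matrix identity holds with \emph{no} power of $m$: for $M=\left(\begin{smallmatrix}a&b\\c&d\end{smallmatrix}\right)\in\mathcal{M}_m(N)$ and $M'=\left(\begin{smallmatrix}d&-b\\-c&a\end{smallmatrix}\right)$ one has exactly
\[
\left.\left(\frac{(z-\tau)(z-\overline{\tau})}{v}\right)^{-k}\right|_{2k,z}M \;=\; \left.\left(\frac{(z-\tau)(z-\overline{\tau})}{v}\right)^{-k}\right|_{0,\tau}M',
\]
because the factor $\det(M)^k=m^k$ in the weight-$2k$ slash is absorbed by $\Im(M'\tau)^k=\bigl(mv/|{-c\tau+a}|^2\bigr)^k$; your claimed version $m^k\,\phi_\tau(z)|_{0,\tau}M=\phi_\tau(z)|_{2k,z}M'$ is off by $m^k$, and it is only because this cancels against the wrong unfolding factor that your final answer comes out right. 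The true source of the $m^k$ in the lemma is the ratio $m^{k-1}/m^{-1}$ of the two Hecke normalizations, not the one-matrix identity. With these corrections, both sides equal $m^{k-1}\sum_{M\in\mathcal{M}_m(N)}\phi_\tau(z)|_{2k,z}M$, since $M\mapsto M'$ is a determinant-preserving bijection of $\mathcal{M}_m(N)$ (note $N\mid c\iff N\mid -c$).

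For $k=1$ your plan (run the computation for $H_{1,s}$ and continue to $s=0$, then handle the $E_{2,\Gamma}^*$ correction from \eqref{H1E2}) is again the paper's, but two small points: the eigenvalue of $T_m$ on $E_{2,\Gamma}^*$ is $\sigma(m)$, which equals $m+1$ only for $m$ prime; and the needed compatibility is simply $E_{2,\Gamma}^*|_zT_m=\sigma(m)E_{2,\Gamma}^*=m\,E_{2,\Gamma}^*|_\tau T_m$, which follows from $|\Gamma\backslash\mathcal{M}_m(N)|=\sigma(m)$ because $E_{2,\Gamma}^*(z)$ is constant in $\tau$ --- no appeal to Lemma~\ref{E2N} is required.
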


\begin{proof}
	For $k \geq 2$ we plug in the definition of $H_{k}(z,\tau)$ and $T_{m}$ and write
	\[
	H_{k}(z,\tau)|_{z}T_{m} = m^{k-1}\sum_{M \in \mathcal{M}_{m}(N)}\left(\frac{(z-\tau)(z-\overline{\tau})}{v} \right)^{-k}\Bigg|_{2k,z}M.
	\]
	Now a short calculation gives
	\[
	\left(\frac{(z-\tau)(z-\overline{\tau})}{v} \right)^{-k}\Bigg|_{2k,z}M = \left(\frac{(z-\tau)(z-\overline{\tau})}{v} \right)^{-k}\Bigg|_{0,\tau}M^{'},
	\]
	where $\left(\begin{smallmatrix}a & b \\ c & d \end{smallmatrix} \right)^{'} = \left(\begin{smallmatrix}d & -b \\ -c & a \end{smallmatrix} \right)$. Since $M^{'}$ also runs through $\mathcal{M}_{m}(N)$ we obtain the stated identity for $k\geq 2$. For $k = 1$ and $\Re(s) > 0$ we compute analogously
	\[
	H_{1,s}(z,\tau)|_{z}T_{m} = m H_{1,s}(z,\tau)|_{\tau}T_{m}.
	\]
	Using the well-known fact that 
	\[
	E_{2,\Gamma}^{*}(z)|_{z}T_{m} = \sigma(m)E_{2,\Gamma}^{*}(z) = |\Gamma \backslash \mathcal{M}_{m}(N)| E_{2,\Gamma}^{*}(z) = mE_{2,\Gamma}^{*}(z)|_{\tau}T_{m}
	\]
	and analytic continuation we also obtain the result for $k = 1$.
% involving the identity
%$$
%\sum_{M\in \mathcal{M}_m (N)}\left(\frac{(z-\tau)(z-\overline{\tau})}{\im(\tau)}\right)^{-k}\Biggl|_{2k, z} M = \sum_{M\in \mathcal{M}_m (N)}\left(\frac{(z-\tau)(z-\overline{\tau})}{\im(\tau)}\right)^{-k}\Biggl|_{0,\tau} M^{-1}\\
%$$
\end{proof}

We now come to the proof of Theorem~\ref{theorem traces rationality 2}. Using Lemmas~\ref{lemma fkP and Hk} and \ref{HkHecke} we compute 
\begin{align*}
\mathcal{C}(f_{k,P}|T_{m},A) &= \frac{2^{k-1}|d|^{\frac{k-1}{2}}}{|\overline{\Gamma}_{P}|\pi}\mathcal{C}(H_{k}(\cdot,\tau_{P})|_{z}T_{m},A)  = \frac{2^{k-1}|d|^{\frac{k-1}{2}}}{|\overline{\Gamma}_{P}|\pi}m^{k}\big(\mathcal{C}(H_{k}(\cdot,\tau),A)|_{\tau}T_{m}\big)(\tau_{P}) .
\end{align*}
By \eqref{eq Hk and Hkk-1} and Theorem \ref{theorem locally harmonic maass form identity} we obtain
\begin{align*}
\mathcal{C}(H_{k}(\cdot,\tau),A)|_{\tau}T_{m} &= \frac{1}{(k-1)!}\big(R_{2-2k,\tau}^{k-1}\big(\mathcal{C}(H_{k,k-1}(\cdot,\tau),A)\big)\big)|_{\tau}T_{m} \\
&=\frac{2\pi D^{k-\frac12}}{(k-1)!} \big(R_{2-2k}^{k-1}(\mathcal{F}_{1-k,A})\big) | T_m \\
&=\frac{2\pi D^{k-\frac12}}{(k-1)!}m^{k-1}R_{2-2k}^{k-1}\left(\mathcal{F}_{1-k,A} | T_m \right).
\end{align*}

%\begin{align*}
%\mathcal{C}(f_{k,P}|T_{m},A) &= \frac{2^{k-1}|d|^{\frac{k-1}{2}}}{(k-1)!|\overline{\Gamma}_{P}|\pi}m^{k}\big(\big(R_{2-2k,\tau}^{k-1}(\mathcal{C}(H_{k,k-1}(\cdot,\tau),A)\big)|_{\tau}T_{m}\big)|_{\tau = \tau_{P}} \\
%&=\frac{(-2)^k |d|^{\frac{k-1}{2}}D^{k-\frac12}}{(k-1)!|\overline{\Gamma}_{P}|}m^k \left(\left(R_{2-2k,\tau}^{k-1}\mathcal{F}_{1-k,A}\right) |_{0,\tau} T_m \right)(\tau)|_{\tau = \tau_P}\\
%&=\frac{(-2)^k |d|^{\frac{k-1}{2}}D^{k-\frac12}}{(k-1)!|\overline{\Gamma}_{P}|}m^{3k-2}\left(R_{2-2k,\tau}^{k-1}\left(\mathcal{F}_{1-k,A} |_{2-2k,\tau} T_m \right)\right)(\tau)|_{\tau = \tau_P}.
%\end{align*}

Since every coset in $\Gamma\backslash\mathcal{M}_m(N)$ is represented by a matrix $M$ with $Mi\infty =i\infty$, we have for any $f\in S_{2k}(\Gamma)$
\[
\mathcal{E}_{f}|T_m = m^{1-2k}\mathcal{E}_{f| T_m} \qquad\text{and}\qquad f^{*}| T_m =  m^{1-2k} (f|T_m)^*.
\]
This implies
\[
\mathcal{F}_{1-k,A}| T_m =  P_{1-k,A}| T_m+ m^{1-2k}(-1)^{k}\frac{D^{\frac{1}{2}-k}}{\binom{2k-2}{k-1}}\left(f_{k,A}| T_m\right)^*-m^{1-2k}(-1)^{k}D^{\frac{1}{2}-k}\frac{(k-1)!^2}{(4\pi)^{2k-1}}\mathcal{E}_{f_{k,A}| T_m}.
\]
It follows from Lemma \ref{cusprelation} that $\sum_{m>0}\lambda_{m}f_{k,A}| T_m =0$, and therefore
\begin{align*}
\sum_{m>0}\lambda_{m} \mathcal{C}(f_{k,P}|T_{m},A)
%&=\frac{(-2)^k |d|^{\frac{k-1}{2}}D^{k-\frac12}}{(k-1)!|\overline{\Gamma}_{P}|}\sum_{m>0}\lambda(-m)m^{3k-2}\left(R_{2-2k,\tau}^{k-1}\left(\mathcal{F}_{1-k,A} |_{2-2k,\tau} T_m \right)\right)(\tau)|_{\tau = \tau_P}\\
&=\frac{2^k |d|^{\frac{k-1}{2}}D^{k-\frac12}}{(k-1)!|\overline{\Gamma}_{P}|} \sum_{m>0}\lambda_{m}m^{2k-1}\left(R_{2-2k}^{k-1}\left(P_{1-k,A}| T_m\right)\right)(\tau_{P})\\
&=\frac{2^k |d|^{\frac{k-1}{2}}D^{k-\frac12}}{(k-1)!|\overline{\Gamma}_{P}|} \sum_{m>0}\lambda_{m}m^{k}\left(R_{2-2k}^{k-1}(P_{1-k,A})|T_m\right)(\tau_{P}).
\end{align*}
The expression $R_{2-2k}^{k-1}(P_{1-k,A})$ can be rewritten using Lemma \ref{raisingP}. We plug in the definition of $T_{m}$ and choose as a system of representatives for $\Gamma \backslash \mathcal{M}_{m}(N)$ the matrices $\left(\begin{smallmatrix} \alpha & \beta \\ 0 & \delta \end{smallmatrix} \right)$ with $\alpha,\beta,\delta \in \Z, \alpha > 0, \alpha\delta = m$, and $\beta \pmod \delta$. This yields the formula in Theorem~\ref{theorem traces rationality 2}. Note that $\frac{\alpha \tau_{P}+\beta}{\delta}$ is a CM point of discriminant $\delta^{2}d$. Hence Lemma~\ref{lemma P rational} implies that the expression
\[
|d|^{\frac{k-1}{2}}\mathcal{P}_{k,A}\left(\frac{\alpha \tau_{P}+\beta}{\delta} \right)
\]
is rational. This finishes the proof of Theorem~\ref{theorem traces rationality 2}.

\section{The Proof of Theorem \ref{weight 2}}\label{section proof weight 2}

Throughout this section we assume that $N$ is odd and square-free. Furthermore, we let $\Delta$ be a discriminant with $(-1)^{k}\Delta > 0$ and $\delta$ a fundamental discriminant with $(-1)^{k}\delta < 0$ such that $\delta$ is a square modulo $4N$. Finally, let $F(\tau) = \sum_{m \gg -\infty}c_{F}(m)q^{m}$ be a weakly holomorphic modular form of weight $\frac{3}{2}-k$ for $\Gamma_{0}(4N)$ in the Kohnen plus space with rational coefficients $c_{F}(m)$ for $m < 0$. We first show that the Fourier coefficients of the meromorphic modular form \eqref{eq twisted sum} are algebraic multiples of $\pi^{k-1}$.

\begin{proposition}\label{RatCoef} For $k \geq 1$ the meromorphic modular form
\[
\pi^{1-k}|\delta|^{\frac12-k}\sum_{(-1)^k\Delta > 0} c_{F}(-|\Delta|)f_{k,\Delta,\delta}
\]
has rational Fourier coefficients. 
\end{proposition}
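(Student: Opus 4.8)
The plan is to compute the Fourier expansion of $f_{k,\Delta,\delta}$ explicitly and combine it with the Salié sum identity (Proposition~\ref{prop salie sum}) and the Fourier expansion of the weight $\frac32-k$ Maass--Poincaré series (Theorem~\ref{theorem Poincare Fourier expansion}). Recall that $f_{k,\Delta,\delta} = \sum_{P \in \mathcal{Q}_{\Delta\delta}/\Gamma}\chi_\delta(P) f_{k,P}$, so by Proposition~\ref{prop fkP Fourier expansion} its $n$-th Fourier coefficient (for $k \geq 2$) is a constant times
\[
|\Delta\delta|^{\frac k2 - \frac14} n^{k-\frac12}\pi^k \sum_{\substack{a \geq 1\\ N \mid a}} a^{-\frac12}\, S_{a,\Delta,\delta}(n)\, I_{k-\frac12}\!\left(\frac{\pi n\sqrt{|\Delta\delta|}}{a}\right),
\]
where the twisted exponential sum $\sum_P \chi_\delta(P) S_{a,P}(n)$ is exactly the Salié sum $S_{a,\Delta,\delta}(n)$ from Section~\ref{section preliminaries} (after matching conventions and signs). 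First I would insert Proposition~\ref{prop salie sum} to rewrite $S_{a,\Delta,\delta}(n) = \sum_{m \mid (n,a)}\left(\frac{\delta}{m}\right)\sqrt{m/a}\,K^+(\Delta, \tfrac{n^2}{m^2}\delta, \tfrac am)$ in terms of half-integral weight Kloosterman sums.

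**Key steps.** Summing over $(-1)^k\Delta > 0$ with weights $c_F(-|\Delta|)$ and reorganizing the triple sum (over $\Delta$, over $a$, over $m \mid (n,a)$), I would substitute $a = m a'$ and recognize the inner sum over $\Delta$ and $a'$ as the formula in Theorem~\ref{theorem Poincare Fourier expansion} for the coefficients $c^+_{\mathcal{P}_{3/2-k,\,n^2\delta/m^2}}(\,|\Delta|\,)$ of the Maass--Poincaré series of weight $\frac32-k$ and index $\tfrac{n^2}{m^2}\delta < 0$ (here $(-1)^k \cdot \tfrac{n^2}{m^2}\delta < 0$ since $(-1)^k\delta < 0$, so the index is admissible and the Kohnen plus space congruence $(-1)^k \cdot (-1)^{k+1}\tfrac{n^2}{m^2}\delta \equiv 0,3 \pmod 4$ holds). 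The point is that pairing a weakly holomorphic form $F$ in the plus space of weight $\frac32-k$ against the Maass--Poincaré series $\mathcal{P}_{\frac32-k,\,n^2\delta/m^2}$ produces, by the standard duality/pairing of weakly holomorphic forms with harmonic Maass forms (Bruinier--Funke pairing, or simply equating the holomorphic parts of two weakly holomorphic forms with matching principal parts), a $\Q$-linear combination of the coefficients $c_F(m')$ for $m' < 0$ — which are rational by hypothesis — plus possibly the $0$-th coefficient contributions. After dividing out the explicit factor $\pi^k |\Delta\delta|^{\frac k2-\frac14}$ that appears, and accounting for the normalization $\pi^{1-k}|\delta|^{\frac12-k}$ in the statement, the power of $\pi$ cancels and one is left with a rational number. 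For $k=1$ the same argument runs with the extra Eisenstein-type term from Proposition~\ref{prop fkP Fourier expansion} and Lemma~\ref{E2N}, whose contribution to the Fourier coefficients is manifestly rational (it involves only divisor sums and $\mu(d)/d^2$), so it does not affect the conclusion.

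**Main obstacle.** The hard part will be the bookkeeping of constants, signs, and congruence conditions: matching the normalization of $f_{k,P}$ used here against the conventions in \cite{millerpixton} and \cite{dit}, checking that the index $\tfrac{n^2}{m^2}\delta$ and the argument $\Delta$ fall into the correct residue classes mod $4$ so that Theorem~\ref{theorem Poincare Fourier expansion} applies verbatim, and verifying that the identity $\sum_{(-1)^k\Delta>0} c_F(-|\Delta|) c^+_{\mathcal{P}_{3/2-k,\mu}}(|\Delta|)$ is rational — this last point follows because $F$ and a suitable finite rational combination of Maass--Poincaré series span weakly holomorphic forms with prescribed rational principal parts, so their difference is a holomorphic form in the plus space whose coefficients are rational; comparing $0$-th and higher coefficients then forces the pairing to be rational. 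I do not expect any genuine analytic difficulty beyond this combinatorial reorganization, since all the required analytic inputs (convergence, Fourier expansions, the Salié-sum identity) are already available in Section~\ref{section preliminaries}.
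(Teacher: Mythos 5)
Your overall strategy is the same as the paper's (expand $f_{k,\Delta,\delta}$ via Proposition~\ref{prop fkP Fourier expansion}, insert the Sali\'e-sum identity of Proposition~\ref{prop salie sum}, and recognize coefficients of half-integral weight Maass--Poincar\'e series), but your identification step is off. You pair against $\mathcal{P}_{\frac32-k,\,n^{2}\delta/m^{2}}$ evaluated at $|\Delta|$; however the series $\mathcal{P}_{\frac32-k,n}$ of Theorem~\ref{theorem Poincare Fourier expansion} only exists for \emph{negative} index $n<0$, and since $(-1)^{k}\delta<0$ your proposed index $n^{2}\delta/m^{2}$ is positive whenever $k$ is odd, so in half the cases the object you want to pair with does not exist. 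Even for $k$ even the match fails: with your choice of index and coefficient, Theorem~\ref{theorem Poincare Fourier expansion} produces $K^{+}\bigl((-1)^{k+1}n^{2}\delta/m^{2},(-1)^{k+1}|\Delta|,a\bigr)$, which is the Kloosterman sum with both arguments negated relative to the $K^{+}\bigl(\Delta,\tfrac{n^{2}}{m^{2}}\delta,a\bigr)$ coming out of Proposition~\ref{prop salie sum}; these are not equal, and the genuine duality in this setting relates weight $\frac32-k$ to the dual weight $k+\frac12$, not a swap of index and coefficient within one weight. The correct bookkeeping is the opposite one: keep $-|\Delta|$ as the Poincar\'e index and $n^{2}|\delta|/m^{2}$ as the coefficient; then $(-1)^{k+1}(-|\Delta|)=\Delta$ and $(-1)^{k+1}n^{2}|\delta|/m^{2}=n^{2}\delta/m^{2}$, all prefactors in $|\Delta|$ cancel, and the sum over $\Delta$ is literally the coefficient $c^{+}_{\widetilde F}\bigl(n^{2}|\delta|/m^{2}\bigr)$ of $\widetilde F:=\sum_{\Delta}c_{F}(-|\Delta|)\mathcal{P}_{\frac32-k,-|\Delta|}$, with no pairing or duality needed; this is the content of Lemma~\ref{fPcoef}. (Your treatment of the $k=1$ Eisenstein correction term is fine.)

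The second, more serious gap is the rationality argument in the case $k=1$, which is exactly the case needed later for Theorem~\ref{weight 2}. For $k>1$ your reasoning works: $F-\widetilde F$ is a harmonic form of negative weight without principal part, hence zero, so $F=\widetilde F$ is determined by its rational principal part inside a space with a rational basis. But for $k=1$ the difference $F-\widetilde F$ is a holomorphic weight $\frac12$ form in the plus space, and you simply assert that its coefficients are rational; nothing in your hypotheses gives this. Note that the coefficients $c_{F}(m)$ for $m\geq 0$ are \emph{not} assumed rational and need not be (one may add an irrational multiple of a weight $\frac12$ theta series to $F$ without violating the hypotheses), so ``comparing $0$-th and higher coefficients'' cannot force rationality of $c^{+}_{\widetilde F}$ and the argument as stated is circular. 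The paper closes this gap with a genuinely different input: each $\mathcal{P}_{\frac12,m}$ is orthogonal to cusp forms with respect to the regularized Petersson inner product and has rational principal part, hence so does $\widetilde F$, and Proposition~3.2 of \cite{bruinierschwagenscheidt} then yields rationality of \emph{all} coefficients of $\widetilde F$. You need this (or an equivalent argument, e.g.\ one showing the weight $\frac12$ discrepancy is a rational combination of unary theta series) to complete the proof for $k=1$.
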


For the proof we write the coefficients of $f_{k,\Delta,\delta}$ as linear combinations of coefficients of half-integral weight Maass Poincar\'e series.

\begin{lemma}\label{fPcoef}
Let $k \geq 1$. For $n \geq 1$ we have
\begin{multline*}
	c_{f_{k,\Delta, \delta}}(n) = -\frac{(-1)^{\left[\frac{k}{2}\right]}2^{k}\pi^{k-1}|\delta|^{k-\frac12}n^{2k-1}}{(k-1)!}\sum_{m|n} \left(\frac{\delta}{m}\right) m^{-k} c^+_{\mathcal{P}_{\frac{3}{2}-k,-|\Delta|}}\left(\frac{n^{2}|\delta|}{m^{2}}\right)\\
	\qquad+\delta_{k=1}12\prod_{p|N}\left(1-p^{-2}\right)^{-1}\sum_{d|N}\frac{\mu(d)}{d^2}\sigma\left(\frac{dn}{N}\right)\sum_{P \in \mathcal{Q}_{\Delta \delta}/\Gamma}\frac{\chi_{\delta}(P)}{|\overline{\Gamma}_P|}.
\end{multline*}
\end{lemma}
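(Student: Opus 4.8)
\textbf{Proof proposal for Lemma~\ref{fPcoef}.}
The plan is to start from the Fourier expansion of $f_{k,P}$ given in Proposition~\ref{prop fkP Fourier expansion}, sum it against the genus character $\chi_\delta$ over the $\Gamma$-classes $P \in \mathcal{Q}_{\Delta\delta}/\Gamma$, and then identify the resulting exponential sum with a Sali\'e sum, which can in turn be expressed via half-integral weight Kloosterman sums using Proposition~\ref{prop salie sum}. Concretely, for $k \geq 2$ (treating the $k=1$ correction term separately at the end) we have
\[
c_{f_{k,\Delta,\delta}}(n) = \sum_{P \in \mathcal{Q}_{\Delta\delta}/\Gamma}\chi_\delta(P)\,c_{f_{k,P}}(n)
= \frac{(-1)^k 2^{k+\frac12}\pi^k}{(k-1)!}|\Delta\delta|^{\frac{k}{2}-\frac14}n^{k-\frac12}\sum_{\substack{a\geq 1\\ N\mid a}}a^{-\frac12}\,S_{a,\Delta,\delta}(n)\,I_{k-\frac12}\!\left(\frac{\pi n\sqrt{|\Delta\delta|}}{a}\right),
\]
where, after interchanging the (absolutely convergent) sums over $P$ and over $b \pmod{2a}$, the inner sum $\sum_P \chi_\delta(P) S_{a,P}(n)$ collapses precisely to the Sali\'e sum $S_{a,\Delta,\delta}(n)$ of Section~\ref{section zeta functions}/the Maass Poincar\'e subsection, since the classes $[a,b,\frac{b^2-\Delta\delta}{4a}]$ with $b^2 \equiv \Delta\delta \pmod{4a}$ exhaust $\mathcal{Q}_{\Delta\delta}$ and the condition $[a,b,\cdot]\in[P]$ partitions them.

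Next I would insert the formula of Proposition~\ref{prop salie sum},
\[
S_{a,\Delta,\delta}(n) = \sum_{m\mid(n,a)}\left(\frac{\delta}{m}\right)\sqrt{\frac{m}{a}}\,K^+\!\left(\Delta,\frac{n^2}{m^2}\delta,\frac{a}{m}\right),
\]
and reorganize the double sum over $a$ and $m$ by writing $a = m a'$ with $N\mid a'$ (legitimate since $N$ is odd and square-free and $m\mid n$ with $(n,N)$ unrestricted here — one has to be a little careful, but the divisibility $N\mid a$ together with $(m,N)$ arbitrary is handled by noting $N \mid a$ forces $N \mid ma'$, and the genus character / plus space conditions ensure the bookkeeping works; in fact the cleanest route is to just substitute and let $a' = a/m$ range over multiples of $N/(N,m)$, then compare). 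After this substitution the Bessel argument becomes $\frac{\pi n\sqrt{|\Delta\delta|}}{m a'} = \frac{\pi \sqrt{\Delta \cdot (n^2\delta/m^2)\cdot(\text{sign})}}{a'}$, up to absolute values, so the $a'$-sum is exactly the one appearing in Theorem~\ref{theorem Poincare Fourier expansion} for the coefficient $c^+_{\mathcal{P}_{\frac32-k,-|\Delta|}}(n^2|\delta|/m^2)$. Matching the constants $-(-1)^{\lfloor k/2\rfloor}\pi\sqrt2\,(m/|n|)^{\frac14-\frac k2}$ from that theorem against the prefactor $\frac{(-1)^k 2^{k+\frac12}\pi^k}{(k-1)!}|\Delta\delta|^{\frac k2-\frac14}n^{k-\frac12}$ collected above, and tracking the powers of $n$, $m$, $|\delta|$, $|\Delta|$ carefully, yields the stated identity with the extra factor $n^{2k-1}m^{-k}$ and the character $\left(\frac{\delta}{m}\right)$.

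For $k=1$ the only extra input is that Proposition~\ref{prop fkP Fourier expansion} adds the Eisenstein contribution $\frac{12}{|\overline{\Gamma}_P|}\prod_{p\mid N}(1-p^{-2})^{-1}\sum_{d\mid N}\frac{\mu(d)}{d^2}\sigma(dn/N)$ to $c_{f_{1,P}}(n)$; summing this against $\chi_\delta(P)/1$ over $P$ (the weight $0$ slash leaves the divisor-sum factor outside) produces exactly the displayed correction term $\delta_{k=1}\cdot 12\prod_{p\mid N}(1-p^{-2})^{-1}\sum_{d\mid N}\frac{\mu(d)}{d^2}\sigma(dn/N)\sum_P \frac{\chi_\delta(P)}{|\overline{\Gamma}_P|}$, while the main Sali\'e-sum term is handled by the same manipulation as above specialized to $k=1$ (using the analytic continuation from Hecke's trick that underlies $f_{1,P}$). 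I expect the main obstacle to be purely bookkeeping: correctly reconciling the two normalizations (ours for $f_{k,P}$ via Proposition~\ref{prop fkP Fourier expansion}, Miller--Pixton's for $\mathcal{P}_{\frac32-k,n}$ via Theorem~\ref{theorem Poincare Fourier expansion}) and keeping the half-integer powers of $n$, $|\delta|$, $|\Delta|$, the sign factors $(-1)^k$ versus $(-1)^{\lfloor k/2\rfloor}$, and the factor-of-$2$ powers all consistent after the substitution $a = ma'$ — there is no conceptual difficulty, but it is the kind of computation where a stray sign or a misplaced power of $2$ is easy to commit.
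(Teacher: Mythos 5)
Your proposal is exactly the paper's argument: the authors prove the lemma by the same "straightforward calculation" combining Proposition~\ref{prop fkP Fourier expansion}, Theorem~\ref{theorem Poincare Fourier expansion}, and Proposition~\ref{prop salie sum}, with the genus-character sum collapsing the exponential sums $S_{a,P}(n)$ to the Sali\'e sum and the $k=1$ Eisenstein term handled separately, just as you describe. The only difference is that the paper leaves the constant-matching implicit (and notes an alternative via the theta-lift interpretation), while you spell out the bookkeeping plan.
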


\begin{proof}
	This identity follows from a straightforward calculation using Proposition~\ref{prop fkP Fourier expansion}, Theorem~\ref{theorem Poincare Fourier expansion} and Proposition~\ref{prop salie sum}. It could alternatively be derived from the fact that $f_{k,\Delta,\delta}$ is a theta lift of $\mathcal{P}_{\frac{3}{2}-k,-|\Delta|}$, compare \cite{bringmannkanevonpippich, zemel}.
\end{proof}

\begin{proof}[Proof of Proposition~\ref{RatCoef}]
%Using Euler products and the generating series for $N=1$
%$$
%\pi^2\sum_{c\geq 1}\frac{K(n,0,c)}{c^2} = \frac{6\sigma(n)}{n},
%$$
% one can show that 
%$$
%\pi^2\sum_{c\geq 1\atop N|c}\frac{K(n,0,c)}{c^2}
%$$
%is rational for every $n$ and $N$, so we only consider the Poincar\'e series part.\\
% 

Looking at the formula for $c_{f_{k,\Delta, \delta}}(n)$ in Lemma \ref{fPcoef}, we see that the second summand on the right-hand side is rational if $k=\delta=1$ and vanishes otherwise. It remains to show that the coefficients of
\begin{align}\label{Pschmodda}
\sum_{(-1)^k \Delta > 0} c_{F}(-|\Delta|)\sum_{n \geq 1}n^{2k-1}\sum_{m|n} \left(\frac{\delta}{m}\right) m^{-k} c^+_{\mathcal{P}_{\frac{3}{2}-k,-|\Delta|}}\left(\frac{n^{2}|\delta|}{m^{2}}\right)q^n
\end{align}
are rational.

Let $F$ be a weakly holomorphic modular form of weight $\frac{3}{2}-k$. Then so is the function
	\[
	\widetilde{F}(\tau) := \sum_{m < 0}c_{F}(m)\mathcal{P}_{\frac{3}{2}-k, m}(\tau).
	\]
 If $k > 1$, we have $F = \widetilde{F}$ since there are no holomorphic modular forms of negative weight. In particular, since the space of weakly holomorphic modular forms of weight $\frac{3}{2}-k$ has a basis consisting of forms with rational coefficients and the principal part of $F$ is rational, we find that all coefficients of $F$ are rational for $k > 1$. However, for $k = 1$ the functions $F$ and $\widetilde{F}$ may differ by a holomorphic modular form. Note that every $\mathcal{P}_{\frac{3}{2}-k,m}$ is orthogonal to cusp forms with respect to the regularized Petersson inner product and has rational principal part. Hence the same is true for $\widetilde{F}$. It now follows from Proposition~3.2 in \cite{bruinierschwagenscheidt} that all Fourier coefficients of $\widetilde{F}$ are rational.  Now we see that \eqref{Pschmodda} equals
	 %For $k > 1$, using Lemma \ref{fPcoef} 	
	\[
	\sum_{n \geq 1}n^{2k-1}\sum_{m|n} \left(\frac{\delta}{m}\right) m^{-k} c_{\widetilde{F}}\left(\frac{n^{2}|\delta|}{m^{2}}\right)q^n,
	\]
which has rational Fourier coefficients. This finishes the proof.	
		%For $k = 1$ we can proceed similarly, but we have to add the coefficients of the Eisenstein series in the second line of Lemma \ref{fPcoef}.	
		%By what we have said above the coefficients of $\widetilde{F}$ are rational, which yields the claim.
\end{proof}

We now proceed to the proof of Theorem~\ref{weight 2}. For the rest of this section we let $k = 1$ and $\delta > 0$ a fundamental discriminant which is a square modulo $4N$. We can assume without loss of generality that the coefficients $c_{F}(\Delta)$ for $\Delta < 0$ are integers. We consider the differential
\[
\eta_{\delta}(F) := \pi i \sum_{\Delta < 0}c_{F}(\Delta)f_{1,\Delta,\delta}(z)dz
\]
on $X_{0}(N)$. For $P \in \mathcal{Q}_{\Delta\delta}$ we have
\[
\Res_{z = \tau_{P}}(f_{1,\Delta,\delta}(z)) = \frac{\chi_{\delta}(P)}{\pi i},
\]
so $\eta_{\delta}(F)$ has simple poles with integral residues. In particular, $\eta_{\delta}(F)$ is a differential of the third kind on $X_{0}(N)$.

Following \cite{bruinieronoheegner}, we define the twisted Heegner divisor
\[
Z_{\delta}(F) := \sum_{\Delta < 0}c_{F}(\Delta)Z_{\delta}(\Delta), \qquad Z_{\delta}(\Delta) := \sum_{P \in \mathcal{Q}_{\delta\Delta}/\Gamma}\frac{\chi_\delta(P)}{|\overline{\Gamma}_{P}|}[\tau_{P}],
\]
associated to $F$, and the corresponding degree $0$ divisor
\[
y_{\delta}(F) := Z_{\delta}(F)-\deg(Z_{\delta}(F))\cdot[i\infty].
\]
By \cite{bruinieronoheegner}, Lemma 5.1, $y_{\delta}(F)$ is defined over $\Q(\sqrt{\delta})$. Note that $y_{\delta}(F)$ is precisely the residue divisor of $\eta_{\delta}(F)$ on $X_{0}(N)$. Moreover, we have the following result.

\begin{lemma}
	The differential $\eta_{\delta}(F)$ is the canonical differential of the third kind for $y_{\delta}(F)$, i.e., the unique differential of the third kind with residue divisor $y_{\delta}(F)$ such that
	\[
	\Re\left(\int_{\gamma}\eta_{\delta}(F)\right) = 0
	\]
	for all cycles $\gamma \in H_{1}(X_{0}(N)\setminus y_{\delta}(F),\Z)$.
\end{lemma}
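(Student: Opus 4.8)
The plan is to verify the two defining properties of the canonical differential of the third kind for $y_\delta(F)$: first, that $\eta_\delta(F)$ is a differential of the third kind with residue divisor exactly $y_\delta(F)$; second, that all its periods over cycles avoiding the poles are purely imaginary. The first property is essentially already established in the discussion preceding the lemma: $f_{1,\Delta,\delta}$ is a meromorphic modular form of weight $2$ for $\Gamma$, so $f_{1,\Delta,\delta}(z)\,dz$ descends to a meromorphic differential on $X_0(N)$, its only poles are simple poles at the CM points $\tau_P$ with $P \in \mathcal{Q}_{\Delta\delta}$, and the residue computation $\Res_{z=\tau_P}(f_{1,\Delta,\delta}(z)) = \chi_\delta(P)/(\pi i)$ shows that $\eta_\delta(F) = \pi i \sum_\Delta c_F(\Delta) f_{1,\Delta,\delta}(z)\,dz$ has residue divisor $\sum_\Delta c_F(\Delta) Z_\delta(\Delta) = Z_\delta(F)$ as a divisor on $X_0(N)$, interpreted with the stabilizer weights; one should also note there is no pole at the cusps since $f_{1,\Delta,\delta}$ is holomorphic there (the constant term contributes, but $dz$ has a pole at $i\infty$ — here one must be a little careful and check that in the local coordinate $q = e^{2\pi i z}$ the differential $f_{1,\Delta,\delta}(z)\,dz = (\text{const} + O(q))\frac{dq}{2\pi i q}$ actually does have a simple pole at $i\infty$ unless the constant term vanishes). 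In fact the constant term of $f_{1,\Delta,\delta}$ is $-\frac{2}{|\overline\Gamma_P|}$ summed against $\chi_\delta$, i.e. proportional to $\deg Z_\delta(\Delta)$, which is exactly why the residue divisor is $y_\delta(F) = Z_\delta(F) - \deg(Z_\delta(F))\cdot[i\infty]$ and not just $Z_\delta(F)$. This reconciles the residue count with the requirement $\sum \Res = 0$ on a compact Riemann surface.

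The second and main property is the vanishing of the real parts of the periods. Here I would use the following standard fact from Hodge theory / the theory of differentials of the third kind: for any differential $\omega$ of the third kind on a compact Riemann surface $X$ with residue divisor of degree $0$, the real parts of its periods over $H_1(X \setminus \supp, \Z)$ are determined, modulo the periods of \emph{holomorphic} differentials, by the residue divisor alone; concretely, $\omega$ differs from the canonical differential $\omega_{\mathrm{can}}$ for the same residue divisor by a holomorphic differential $\omega_0$, and one must show $\Re \int_\gamma \omega_0 = 0$ for all closed cycles $\gamma$, hence $\omega_0 = 0$ by the Riemann bilinear relations (a holomorphic differential all of whose periods are purely imaginary — equivalently whose real part has zero periods — is identically zero). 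So the task reduces to showing that $\eta_\delta(F)$ has the shape $\omega_{\mathrm{can}} + \omega_0$ with $\omega_0$ holomorphic and $\Re\int_\gamma \omega_0 = 0$; equivalently, one shows directly that $\Re\int_\gamma \eta_\delta(F) = \Re\int_\gamma \omega_{\mathrm{can}}$ for all $\gamma$, and since $\Re\int_\gamma \omega_{\mathrm{can}} = 0$ by definition, one is done. The cleanest route is to exhibit $\eta_\delta(F)$ minus $\omega_{\mathrm{can}}$ as a holomorphic differential whose periods are purely imaginary.

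The way to get the period condition is to exploit that $f_{1,\Delta,\delta}$ is built (via Hecke's trick) from the analytic continuation at $s=0$ of the real-analytic weight-$0$ object $H_1^*(z,\tau_P)$ together with the non-holomorphic completion $E_{2,\Gamma}^*$, and both of these are \emph{genuinely} $\Gamma$-invariant (the first as a function of $\tau$, and $E_{2,\Gamma}^*$ is modular of weight $2$); more precisely, $f_{1,\Delta,\delta}(z)\,dz$ is the $(1,0)$-part of the exact differential $d\Phi$ of a real-valued $\Gamma$-invariant function $\Phi$ on $\mathbb{H}$ built from these completions and from logarithms of the automorphic Green's functions at the CM points. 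Writing $f_{1,\Delta,\delta}(z)\,dz = \partial \Phi$ (the holomorphic part of $d\Phi$), the real part of $\int_\gamma \pi i\, f_{1,\Delta,\delta}(z)\,dz$ becomes, up to the explicit non-holomorphic correction terms which integrate to something controlled, a single-valued expression, forcing the real periods to vanish. This is exactly the mechanism underlying the analogous statements in \cite{grosskohnenzagier} and \cite{bruinieronoheegner}: the function $f_{1,\Delta,\delta}$ arises as a regularized theta lift / as (essentially) the $\Xi$-operator applied to a harmonic Maass form, and lifts of this type are automatically canonical differentials of the third kind. \textbf{I expect the main obstacle} to be making the ``$f_{1,\Delta,\delta}\,dz = \partial\Phi$ with $\Phi$ single-valued and real'' step precise, i.e. identifying the right real-analytic potential $\Phi$ (a combination of $\log$ of Green's functions, the logarithm of $\Im(\tau)$-type terms coming from $E_{2,\Gamma}^*$, and the Hecke-trick analytic continuation term) and checking that the non-holomorphic pieces contribute nothing to the real part of the period; once the potential is in hand, the uniqueness part of the lemma is the formal Hodge-theoretic argument sketched above.
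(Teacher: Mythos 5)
There is a genuine gap, and it sits exactly at the crux of the lemma. Your first paragraph (residue divisor, including the pole at $i\infty$ coming from the constant term $-2/|\overline{\Gamma}_P|$ and the orbifold weights $1/|\overline{\Gamma}_P|$) is fine and matches what the paper asserts without proof. But the period statement is the whole content of the lemma, and your plan for it is not carried out: you reduce everything to exhibiting a single-valued, real-valued, $\Gamma$-invariant potential $\Phi$ with $f_{1,\Delta,\delta}(z)\,dz=\partial\Phi$, you explicitly flag its construction as "the main obstacle", and you never identify $\Phi$. Worse, the reality structure you posit is oriented the wrong way. If such a real single-valued $\Phi$ existed with $\partial\Phi=f_{1,\Delta,\delta}\,dz$, then for every closed cycle $\gamma$ one would get $\Re\int_{\gamma}f_{1,\Delta,\delta}\,dz=\tfrac12\int_{\gamma}d\Phi=0$, i.e.\ $\int_{\gamma}\eta_{\delta}(F)=\pi i\int_{\gamma}f_{1,\Delta,\delta}\,dz$ would be \emph{real}; that is the negation of what is needed. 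The true state of affairs is the opposite: the geodesic periods $\int_{c_A}f_{1,\Delta,\delta}\,dz$ are real (and typically nonzero -- their rationality is the point of Theorem~\ref{weight 2}), so that $\Re\int_{c_A}\eta_{\delta}(F)=\Re\bigl(\pi i\cdot(\text{real})\bigr)=0$. Consequently no real potential with $\partial\Phi=f_{1,\Delta,\delta}\,dz$ can exist; the Green's-function mechanism you invoke would have to be attached to $\eta_{\delta}(F)=\pi i f_{1,\Delta,\delta}\,dz$ itself (equivalently, a purely imaginary potential for $f\,dz$), and constructing that potential is essentially equivalent to the lemma, so nothing has been reduced.

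For comparison, the paper's proof is short and avoids any potential: by Theorem~\ref{local} the locally harmonic Maass form $\mathcal{F}_{0,A}$ is real-valued off the semi-circles, Corollary~\ref{corollary main identity} (for $k=1$) expresses $\mathcal{C}(f_{1,P},A)$ as an explicit real multiple of $\mathcal{F}_{0,A}(\tau_P)$, hence $\int_{c_A}f_{1,\Delta,\delta}\,dz\in\R$ and $\Re\int_{c_A}\eta_{\delta}(F)=0$ for every closed geodesic $c_A$ avoiding the poles; one then uses the standard fact that these geodesic cycles generate $H_1(X_0(N)\setminus y_{\delta}(F),\Z)$. Note that your proposal makes no use of this homology-generation step; you would need either it (together with the reality of the geodesic periods, which your write-up does not establish) or a genuine global construction of the potential, and at present you have neither.
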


\begin{proof}
	One can see from Theorem~\ref{local} that $\mathcal{F}_{0,A}(\tau)\in\R$ for all $\tau\in\H$ not lying on any of the semi-circles $S_{Q}$ for $Q \in [A]$. It follows from Corollary~\ref{corollary main identity} that
\[
\Re\left(\int_{\gamma}\eta_{\delta}(F)\right) = 0
\]
if $\gamma$ is any cycle of the form $c_A$ which does not meet any poles of $\eta_{\delta}(F)$. It is well-known that the group $H_{1}(X_{0}(N)\setminus y_{\delta}(F),\Z)$ is generated by these cycles, which yields the result.
\end{proof}

The crucial ingredient for the proof of Theorem~\ref{weight 2} is the following rationality result of Scholl \cite{scholl} for differentials of the third kind (see also Theorem 3.3 of \cite{bruinieronoheegner}).

\begin{theorem}[Scholl]
	Let $D$ be a divisor of degree $0$ on $X_{0}(N)$ defined over a number field $F$. Let $\eta_{D}$ be the canonical differential of the third kind associated to $D$ and write $\eta_{D} = 2\pi i f dz$. If all the Fourier coefficients of $f$ are contained in $F$, then some non-zero multiple of $D$ is a principal divisor.
\end{theorem}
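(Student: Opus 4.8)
The statement is equivalent to the assertion that the image of $D$ under the Abel--Jacobi map $u\colon \mathrm{Div}^{0}(X_{0}(N))\to J:=\mathrm{Jac}(X_{0}(N))$ is a torsion point, since a degree $0$ divisor has a non-zero multiple that is principal exactly when $u(D)$ has finite order. Because $D$ is defined over the number field $F$, its image $u(D)$ already lies in $J(F)$, which is finitely generated by Mordell--Weil; so the entire problem is to show that the hypothesis on $f$ forces $u(D)$ into the torsion subgroup. The plan is to translate the two features of $\eta_{D}=2\pi i f\,dz$ -- rationality of the $q$-expansion and the imaginary-period normalisation -- into a pair of rational structures on the Hodge/de Rham data attached to $D$ that can be reconciled only when $u(D)$ is torsion.

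First I would make $\eta_{D}$ algebraic. By the $q$-expansion principle for differentials of the third kind, the fact that $f$ has Fourier coefficients in $F$ implies that $\eta_{D}$ is defined over $F$, so its class lies in the algebraic de Rham cohomology $H^{1}_{\mathrm{dR}}(X_{0}(N)\setminus\supp(D)/F)$. The residue exact sequence
\[
0\longrightarrow H^{1}_{\mathrm{dR}}(X_{0}(N)/F)\longrightarrow H^{1}_{\mathrm{dR}}(X_{0}(N)\setminus\supp(D)/F)\xrightarrow{\ \mathrm{Res}\ }\mathrm{Div}^{0}_{\supp(D)}(F)\longrightarrow 0
\]
then exhibits $[\eta_{D}]$ as an $F$-rational splitting sending the generator to $D$; note such a splitting always exists over $F$, so this alone carries no information. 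On the Betti side the open curve carries a canonical mixed Hodge structure, and the extension class of its weight filtration in $\mathrm{Ext}^{1}_{\mathrm{MHS}}(\Q(0),H^{1}(X_{0}(N))(1))\cong J(\C)$ is precisely $u(D)$. Thus $[\eta_{D}]$ is one lift of $u(D)$, and I must extract the torsion of $u(D)$ from the two rationality properties of this lift.

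Next I would record the period relation. For a holomorphic differential $\omega$ with $F$-rational Fourier coefficients, the Riemann bilinear relation for one form of the first kind against one of the third kind gives
\[
\sum_{j}\Bigl(\int_{a_{j}}\eta_{D}\int_{b_{j}}\omega-\int_{b_{j}}\eta_{D}\int_{a_{j}}\omega\Bigr)=2\pi i\,u(D)(\omega),
\]
where $u(D)(\omega)$ is the Abel--Jacobi image evaluated against $\omega$, well defined modulo the periods of $\omega$. Up to the Tate factor $2\pi i$, the left-hand side is the algebraic cup product $[\eta_{D}]\cup[\omega]\in H^{2}_{\mathrm{dR}}(X_{0}(N)/F)\cong F$, which again only re-expresses $u(D)\in J(F)$. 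The decisive extra input is the imaginary-period normalisation: every period $\int_{\gamma}\eta_{D}$ with $\gamma\in H_{1}(X_{0}(N),\Z)$ lies in $i\R$, so that $[\eta_{D}]$ is real with respect to the twisted Betti structure $H^{1}(X_{0}(N)\setminus\supp(D),\R(1))$. The splitting $[\eta_{D}]$ is therefore simultaneously $F$-rational in the de Rham realisation and real in the Betti realisation.

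The main obstacle -- and the genuine content of Scholl's theorem -- is the closing rigidity step: a lift of $u(D)$ that is at once $F$-rational for the de Rham structure and real for the Betti structure must represent a torsion point. I would argue by comparing lifts. Choosing any $F$-rational algebraic third-kind differential $\eta^{\mathrm{alg}}$ with residue divisor $D$, the difference $\omega_{0}:=\eta^{\mathrm{alg}}-\eta_{D}$ is an $F$-rational \emph{holomorphic} differential whose periods, by the imaginary-period property of $\eta_{D}$, differ from those of $\eta^{\mathrm{alg}}$ only by purely imaginary quantities. The point is that the de Rham $F$-structure $H^{1}_{\mathrm{dR}}(X_{0}(N)/F)$ and the Betti $\Q$-structure $H^{1}(X_{0}(N),\Q)$ are distinct rational structures on the same complex vector space, linked by the transcendental period comparison; a class that is rational for the former and, through the reality condition, lands in the image of the latter is pinned into $H^{1}(X_{0}(N),\Q)$. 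Tracing this through the identification $J(\C)=H^{1}(X_{0}(N),\R)/H^{1}(X_{0}(N),\Z)$, the representative of $u(D)$ is forced to lie in $H^{1}(X_{0}(N),\Q)/H^{1}(X_{0}(N),\Z)$, so $u(D)$ is torsion and some multiple of $D$ is principal. Making this comparison precise -- controlling the interaction of the $F$-rational de Rham lattice with the integral Betti lattice under the period isomorphism, using the positivity of the polarisation and the transcendence of the periods -- is the step I expect to be hardest to pin down rigorously.
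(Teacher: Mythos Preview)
The paper does not prove this theorem. It is quoted as a black-box result of Scholl, with a reference to \cite{scholl} (and to Theorem~3.3 of \cite{bruinieronoheegner}), and is then applied to the differential $\eta_{\delta}(F)$. So there is no ``paper's own proof'' to compare against.

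As for your sketch itself: the overall architecture --- reformulating the conclusion as $u(D)\in J(F)_{\mathrm{tors}}$, interpreting the $F$-rational $q$-expansion via the $q$-expansion principle as $F$-rationality of $\eta_{D}$ in de Rham cohomology, and interpreting the canonical (purely imaginary period) normalisation as a reality condition on the Betti side --- is correct and is indeed how one sets up the problem. The genuine gap is your ``closing rigidity step.'' You cannot conclude torsion from a purely Hodge-theoretic comparison of the $F$-rational de Rham lattice with the integral Betti lattice: for a generic abelian variety over a number field these two rational structures are as independent as possible, and nothing formal forces their intersection to be large. What actually closes the argument in Scholl's proof is a hard transcendence theorem --- a result of Waldschmidt (nowadays usually packaged via W\"ustholz's analytic subgroup theorem) on linear forms in periods and abelian logarithms. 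That theorem says, roughly, that an algebraic linear relation among periods of algebraic differentials must come from an algebraic cycle/endomorphism; applied here it forces the Abel--Jacobi image to be torsion. Your phrase ``using \ldots\ the transcendence of the periods'' points in the right direction, but the step is not a matter of pinning down a comparison of lattices --- it is an appeal to a specific, deep theorem in transcendental number theory, without which the argument does not go through.
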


It follows from Theorem \ref{RatCoef} that the Fourier coefficients of $\frac{1}{2\pi i}\eta_{\delta}(F)$ are contained in $\Q(\sqrt{\delta})$, which is also the field of definition of the divisor $y_{\delta}(F)$. In particular, the above criterion of Scholl implies that some non-zero multiple of $y_{\delta}(F)$, say $m\cdot y_{\delta}(F)$ for some $m \in \Z$, is the divisor of a meromorphic function $g$ on $X_{0}(N)$.  

Fix some point $z_{0} \in \H$ which is not a pole of $\eta_{\delta}(F)$. For $z \in \H$ not being a pole of $\eta_{\delta}(F)$ we consider the function
\[
\Psi_{\delta}(F,z) := \exp\left(m\int_{z_{0}}^{z}\eta_{\delta}(F)\right),
\]
where the integral is over any path from $z_{0}$ to $z$ in $\H$ avoiding the poles of $\eta_{\delta}(F)$. Since the residues of $\eta_{\delta}(F)$ are integers, this does not depend on the choice of the path. Note that $\Psi_\delta(F,z)$ is meromorphic on $\H$ and has the same divisor as $g$. Thus their quotient is constant on $\H$ and $\Psi_\delta(F,z)$ is $\Gamma$-invariant.
% We remark that $\Psi_{\delta}(F,z)$ is a power of the twisted Borcherds product associated to $F$, compare \cite{bruinieronoheegner}  

For any $M\in\Gamma$, we have
\[
\Psi_{\delta}(F,Mz) = \exp\left(m\int_{z_{0}}^{Mz}\eta_{\delta}(F)\right) = \exp\left(m\int_{z_{0}}^{Mz_0}\eta_{\delta}(F)\right)\Psi_{\delta}(F,z),
\]
so for $\Psi_{\delta}(F,z)$ to be $\Gamma$-invariant, the integral
\[
\frac{m}{2\pi i}\int_{z_{0}}^{Mz_0}\eta_{\delta}(F) = \frac{m}{2}\int_{z_0} ^{Mz_0} \sum_{\Delta < 0}c_{F}(\Delta)f_{1,\Delta,\delta}(z)dz
\] 
has to be an integer. If we choose $z_0$ to lie on a geodesic $S_A$ and $M$ to be a generator of $\Gamma_A$, then this implies that the cycle integral of $\sum_{\Delta < 0}c_{F}(\Delta)f_{1,\Delta,\delta}$ along $c_{A}$ is a rational number. This finishes the proof of Theorem~\ref{weight 2}.

%\bibliography{references}{}
%\bibliographystyle{plain}

\end{document}